\newtheorem{theorem}{Theorem}[section]
\newtheorem{lemma}[theorem]{Lemma}
\newtheorem{proposition}[theorem]{Proposition}
\newtheorem{corollary}[theorem]{Corollary}
\theoremstyle{definition}
\newtheorem{definition}[theorem]{Definition}
\newtheorem{remark}[theorem]{Remark}
\newtheorem{question}[theorem]{Question}
\newcommand\R{\mathbb R}
\newcommand\GKZ{{\rm GKZ}_n}
\newcommand\Post{{\rm Post}_n}
\newcommand\RSS{{\rm RSS}_n}
\newcommand\CFZ{{\rm CFZ}_n}
\newcommand\AssI{{\rm Ass}_n^{\rm I}}
\newcommand\AssII{{\rm Ass}_n^{\rm II}}
\newcommand{\conv}{\operatorname{conv}}
\title[Many non-equivalent realizations of the associahedron]{Many non-equivalent realizations\\of the associahedron}
\author{Cesar Ceballos}
\address[Cesar Ceballos]{Department of Mathematics and Statistics,
York University,
4700 Keele St, Toronto, ON M3J 1P3, Canada.}  \email{ceballos@mathstat.yorku.ca}
\author{Francisco Santos}
\address[Francisco Santos]{Facultad de Ciencias, Universidad de Cantabria, 
Av. de los Castros s/n, E-39005 Santander, Spain.}  
\email{francisco.santos@unican.es}
\author{G\"unter M. Ziegler}
\address[G\"unter M. Ziegler]{Inst.\ Mathematics, FU Berlin, Arnimallee 2, 14195 Berlin, Germany.} \email{ziegler@math.fu-berlin.de}
\thanks{The first author was supported by DFG via the Research Training Group ``Methods for Discrete Structures'' and the Berlin Mathematical School;
the second author was partially supported partially supported by the Spanish Ministry of Science under grants MTM2008-04699-C03-02, MTM2011-22792 and CSD2006-00032 (i-MATH) and 
by MICINN-ESF EUROCORES programme EuroGIGA -- ComPoSe -- IP04 (Project EUI-EURC-2011-4306);
the third author was partially supported by DFG and by ERC Advanced Grant ``SDModels'' (agreement no.~247029).
We are grateful to Anton Dochterman, Vincent Pilaud, and in particular Carsten Lange for 
helpful discussions and comments. 
We also very much appreciate extensive critical comments by two referees: The revision based on their recommendations has substantially improved this paper.}
\begin{document}

\begin{abstract}
Hohlweg and Lange (2007) and Santos (2004, unpublished) have found two different ways of constructing exponential families of realizations of the $n$-dimensional associahedron with normal vectors in $\{0,\pm 1\}^n$, generalizing the constructions of Loday (2004) and Chapoton--Fomin--Zelevinsky (2002). We classify the associahedra obtained by these constructions modulo linear equivalence of their normal fans and show, in particular, that the only realization that can be obtained with both methods is the Chapoton--Fomin--Zelevinsky (2002) associahedron.

For the Hohlweg--Lange associahedra our classification is a priori coarser than the classification up to isometry of normal fans, by Bergeron--Hohlweg--Lange--Thomas (2009). However, both yield the same classes. As a consequence, we get that two Hohlweg--Lange associahedra have linearly equivalent normal fans if and only if they are isometric. 

The Santos construction, which produces an even larger family of associahedra, appears here in print for the first time. 
Apart of describing it in detail we relate it with the $c$-cluster complexes and the denominator fans in cluster algebras of type $A$.

A third classical construction of the associahedron, as the secondary polytope of a convex $n$-gon (Gelfand--Kapranov--Zelevinsky, 1990), is shown to never produce a normal fan linearly equivalent to any of the other two constructions.
\end{abstract}

\maketitle\enlargethispage{3mm}\vspace{-9mm}

\tableofcontents

\section{Introduction}
\label{sec:intro}

The $n$-dimensional associahedron is a simple polytope with $C_{n+1}$ (the Catalan number)
 vertices, corresponding to the triangulations of a convex $(n+3)$-gon, and $n(n+3)/2$ facets,
 in bijection with the diagonals of the $(n+3)$-gon. 
It appears in Dov Tamari's unpublished 1951 thesis \cite{Tamari1951}, and was described as a combinatorial object
and realized as a cellular ball by Jim Stasheff in 1963 in his work on the associativity of $H$-spaces \cite{St63}. 
A realization as a polytope by John Milnor from the 1960s is lost; Huguet \& Tamari claimed in 1978
that the associahedron can be realized as a convex polytope \cite{HuguetTamari1978}.
The first such construction, 
via an explicit inequality system, 
was provided in a manuscript by Mark Haiman from 1984 that remained unpublished, but is available as \cite{Ha84}.
The first constructions in print are
due to Carl Lee, from 1989 \cite{Lee89}.
\begin{figure}[ht]
	\includegraphics[width=0.30\textwidth]
	{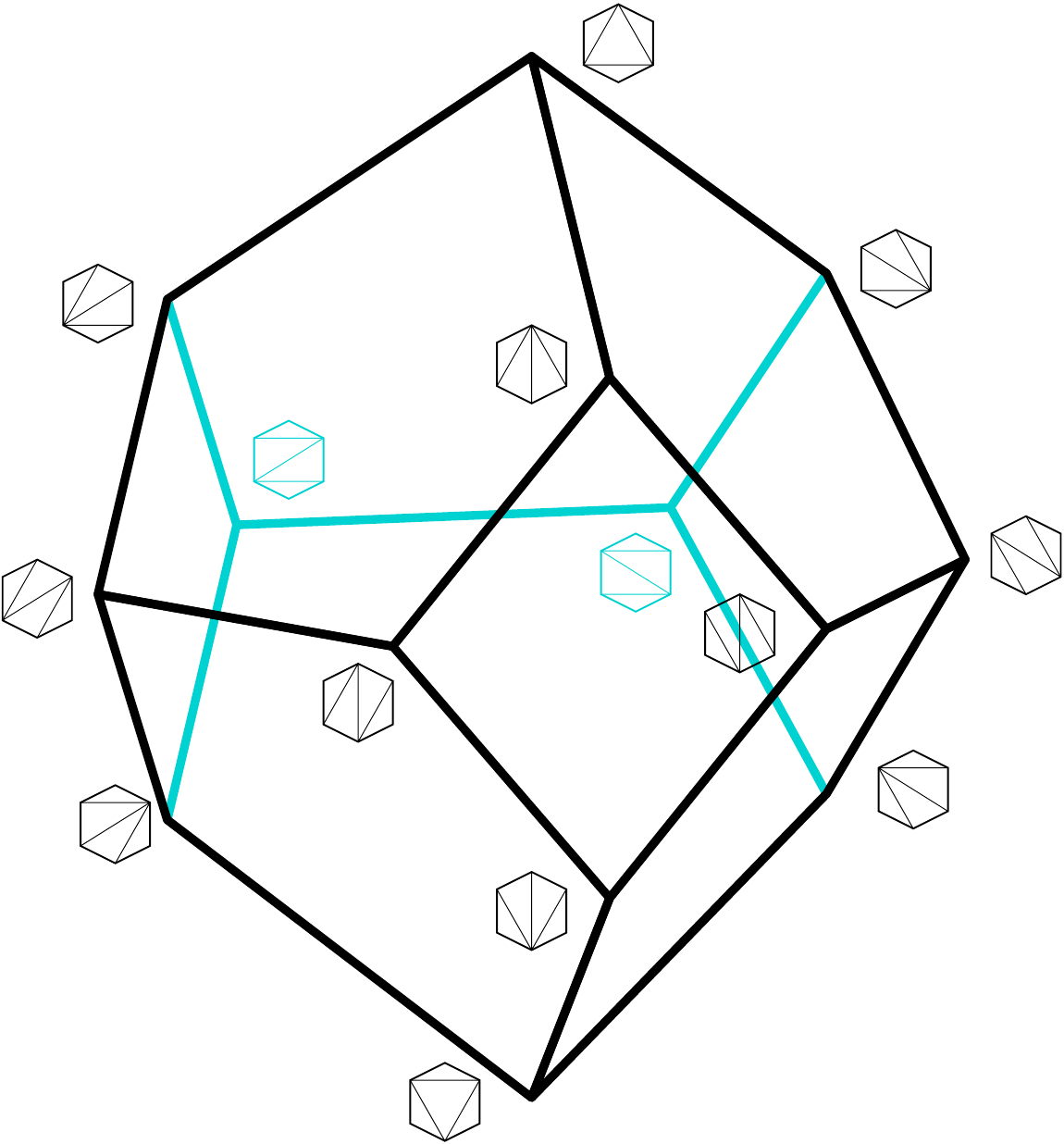}
	\caption{An associahedron,  as the secondary polytope of a regular hexagon.}
	\label{fig:hex-secondary}
\end{figure}

\noindent
Subsequently three systematic approaches were developed that produce realizations
of the associahedra in more general frameworks:
\begin{compactitem}[$\circ$]
	\item 
the associahedron as the \emph{secondary polytope} of a convex polygon, due to Gelfand, Kapranov and Zelevinsky \cite{GZK90, GZK91} (see also \cite[Chap.~7]{GKZ94}), depicted in Figure~\ref{fig:hex-secondary}.
	\item
the associahedron as one of the \emph{generalized permutahedra} introduced by Postnikov in \cite{Po05}. 
The history of this construction begins with Shnider and Sternberg \cite{ShniderSternberg93} 
(compare Stasheff and Shnider \cite[Appendix~B]{St97}), who show that the associahedron can be obtained by cutting certain faces in a simplex (this is  polar to the construction by Carl Lee in~\cite[Sec. 3]{Lee89}, which produces the normal fan of the associahedron as a stellar subdivision of the central fan of the simplex). Loday~\cite{Lo04}, shows how to obtain explicit and nice vertex coordinates for this associahedron using combinatorics of binary trees. Postnikov~\cite{Po05} then puts Loday's construction in context, regarding this associahedron as a special case of a \emph{generalized permutahedron}; 
a polytope lying in (the closure of) the deformation cone of the standard permutahedron.
Rote, Santos and Streinu~\cite{RoSaSt03} and, more recently, Buchstaber~\cite{Bu08} found constructions of essentially the same asociahedron but described quite differently. Following \cite{HoLa07,Po05} we reference this associahedron as the ``Loday realization''.
	\item
the associahedron as the polar of the \emph{cluster complex} of type $A_n$, conjectured by Fomin and Zelevinsky~\cite{FZ03} and constructed by Chapoton, Fomin and Zelevinsky \cite{CFZ02}.
\end{compactitem}

\noindent
We review these three constructions in Section~\ref{review}, after some preliminaries in Section~\ref{sec:preliminaries}. The last two of them have the following properties in common: 
\begin{compactenum}
\item They both have exactly $n$ pairs of parallel facets.
\item In the basis given by the normals to those $n$ pairs, all facet normals have coordinates in $\{0,\pm 1\}$.
\end{compactenum}

This was generalized by Hohlweg and Lange~\cite{HoLa07} and by Santos~\cite{Sa04}, who showed that the Loday and Chapoton--Fomin--Zelevinsky constructions are particular cases of \emph{exponentially many different} constructions of the associahedron, all of them with properties (1) and (2). 
That is, all these associahedra are (normally isomorphic to) polytopes obtained from the regular $n$-cube by cutting certain $\binom{n}{2}$ faces, as seen in Figure~\ref{fig:ManyAssociahedra}. 
Note, however, that the last example of Figure~\ref{fig:ManyAssociahedra} cannot be obtained by cutting faces one after the other; that is to say, its normal fan is not a stellar subdivision of the normal fan of the cube. 

\begin{figure}[ht]
	\centering
	\includegraphics[width=0.85\textwidth]
	{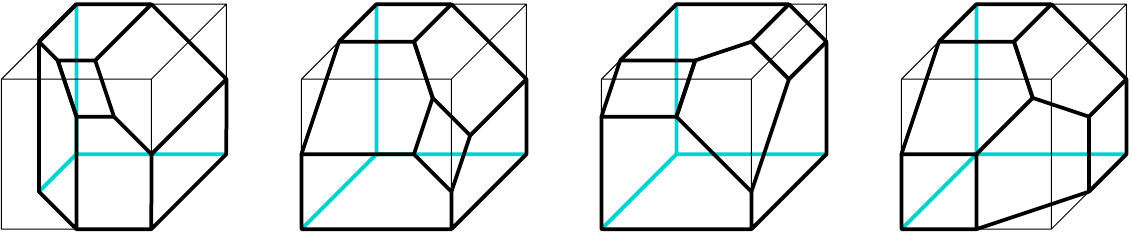}
	\caption{Four normally non-isomorphic $3$--dimensional associahedra.
	From left to right: The Loday associahedron (which is a special case of the Hohlweg--Lange associahedra), 
	the Chapoton--Fomin--Zelevinsky associa\-hedron (a special case of both Hohlweg--Lange and Santos)
	and the other two  Santos associahedra. Since they all have three pairs of parallel facets, we used a linear transformation to draw them fitting in the same cube.}
	\label{fig:ManyAssociahedra}
\end{figure}

We discuss these two generalizations in Sections~\ref{sec:exponentially-many} and~\ref{sec:triang-many}.
The construction by Santos appears in print for the first time in this paper,  so we prove things in detail. For the Hohlweg--Lange realizations we rely on the original papers for most of the details.

Let us explain what we exactly mean by \emph{different} (see more details in Section~\ref{sec:preliminaries}). Since the associahedron is simple, its realizations form an open subset in the space of $\frac{(n+3)n}{2}$-tuples of half-spaces in $\R^n$. Hence, classifying them by affine or projective equivalence does not seem the right thing to do. But for the Hohlweg--Lange and Santos constructions, with normals in $\{-1,0,1\}^n$, the set of possible normal fans obtained is finite. This suggests that one natural classification is by \emph{linear isomorphism of normal fans} or, as we call it, \emph{normal isomorphism}.
In this language:
\begin{compactitem}[$\circ$]
\item the (normal isomorphism classes of) Hohlweg--Lange associahedra are in bijection with the sequences in $\{+, -\}^{n-1}$, modulo reflection and reversal (Theorem~\ref{thm:classification_HL}, see also \cite[Cor.~2.6]{BergeronHLT}).
\item the (normal isomorphism classes of) Santos associahedra are in bijection with the triangulations of the $(n+3)$-gon, modulo dihedral symmetries of the polygon (Corollary~\ref{corollary:triangmany_distinct}).
\end{compactitem}

\noindent
The numbers of distinct associahedra obtained by the two constructions are, thus, roughly $2^{n-3}$ and  $\frac1{2(n+3)}C_{n+1}\approx  {2^{2n+1}}/{\sqrt{\pi n^5}}$; exact counts are in Sections \ref{sec:exponentially-many} and \ref{sec:triang-many}, see also Table~\ref{table:two-types}.

Although a classification of the Hohlweg--Lange associahedra appears already in~\cite{BergeronHLT}, we think our new 
derivation it has some novelty. On the one hand, the classification in~\cite{BergeronHLT} is only up to 
isometry of linear fans; it left the door open for two associahedra classified as different still being equivalent if 
a linear transformation of the normal fan is allowed (see Remark~\ref{rem:gen-associahedra-class}). On the other hand, we show that two Hohlweg--Lange associahedra coming from non-equivalent sign sequences can be distinguished by their pairs of parallel facets (see the proof of Theorem~\ref{thm:classification_HL}). 
The fact that Hohlweg--Lange asociahedra have parallel facets is obvious from the definitions, but was not used 
in~\cite{BergeronHLT}. 

The same works for Santos asociahedra: if two of them are produced by  non-equivalent triangulations, then they are not normally isomorphic (Lemma~\ref{lemma:triangmany_distinct}).
Even more so, the only Hohlweg--Lange associahedron with the same pairs of parallel facets as a Santos associahedron is the Chapoton--Fomin--Zelevinsky associahedron. That is to say (Theorem~\ref{theo:almost-disjoint}):

\begin{theorem}
\label{thm:main}
The Hohlweg--Lange and Santos  families of associahedra are almost disjoint, the only common element being the Chapoton--Fomin--Zelevinsky associahedron.
\end{theorem}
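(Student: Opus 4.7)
The plan is to use the combinatorial pattern of pairs of parallel facets as the classifying invariant on both sides. As emphasized in the discussion preceding the theorem statement, within each family this pattern is already a complete invariant up to normal isomorphism: Theorem~\ref{thm:classification_HL} shows that Hohlweg--Lange associahedra coming from non-equivalent sign sequences have different sets of pairs of parallel facets, and Lemma~\ref{lemma:triangmany_distinct} does the same on the Santos side. Therefore proving the theorem reduces to intersecting the two sets of ``pair-patterns'' arising, respectively, from sign sequences $\varepsilon\in\{+,-\}^{n-1}$ and from triangulations $T$ of the $(n+3)$-gon (modulo the relevant symmetries), and showing that this intersection contains a single element, the Chapoton--Fomin--Zelevinsky one.

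First I would make the two pair-patterns explicit as sets of pairs of diagonals of the $(n+3)$-gon. For a Hohlweg--Lange associahedron the sign sequence determines a labelling of the polygon vertices along a ``snake'', and the pairs of parallel facets can be read off as the pairs of diagonals sharing the same pair of snake-labels. For a Santos associahedron the parallelism is encoded directly by the combinatorics of the triangulation $T$. The easy half is then to check that the CFZ associahedron actually does arise in both families with matching pair-patterns (which is essentially the known fact that the alternating sign sequence on the HL side and the zigzag ``snake'' triangulation on the Santos side both recover CFZ). This shows the intersection is at least a singleton.

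The heart of the proof, and also the main obstacle, is the converse: if $P_\varepsilon$ and $P_T$ have identical pair-patterns then both $\varepsilon$ and $T$ must be the CFZ data. I would attack this locally, by looking at a single pair of parallel facets and tracing the compatibility it forces on $\varepsilon$ on the one hand and on $T$ on the other. In the HL setting each pair of parallel facets corresponds to a short diagonal relative to the snake, while on the Santos side the very same pair of diagonals imposes strong local conditions on $T$ (both endpoints, and the diagonals incident to them, are constrained). Iterating this analysis across all $n$ pairs should force the snake of $\varepsilon$ to coincide with the zigzag structure of $T$, leaving no room other than CFZ. The delicate part is bookkeeping: keeping track of how dihedral symmetries of the polygon and the $\varepsilon\mapsto -\varepsilon$, reversal symmetries interact, so as not to falsely identify or falsely distinguish pair-patterns; an induction on $n$, together with a careful case analysis around a single fixed parallel pair, seems the most natural route.
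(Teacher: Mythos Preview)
Your high-level strategy---use the combinatorial pattern of pairs of parallel facets as the distinguishing invariant, and show that matching patterns force the CFZ data---is exactly the paper's approach. Where your plan diverges is in the execution of the hard direction: you propose an induction on $n$ together with a local analysis ``iterated across all $n$ pairs'' to pin down both $\varepsilon$ and $T$ simultaneously. The paper's argument is considerably more direct and avoids induction entirely.

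The key simplification you are missing is that you do not need to determine $\sigma$ at all; it suffices to show that $T$ is the snake triangulation. To do this, the paper exploits a single structural fact about the Hohlweg--Lange side (from Proposition~\ref{prop:parallel_typeI}): every diagonal in the set $B$ of parallel-facet diagonals joins a vertex of the upper chain of $P_{n+3}(\sigma)$ to a vertex of the lower chain. Since on the Santos side $B$ consists of the diagonals of $T$ together with their flips, this one observation immediately yields three consequences for $T$: (i) every triangle of $T$ has a boundary edge along one chain (so the dual tree is a path), (ii) the third vertex of each triangle lies on the opposite chain, and (iii) no two consecutive boundary edges on one chain are joined to the same vertex on the other (so the dual path alternates left/right). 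These three properties are exactly the definition of the snake triangulation, and the proof is complete---no induction, no bookkeeping of dihedral symmetries beyond the initial reduction via Lemma~\ref{lemma:automorphism_rotation-reflection} to the identity map on diagonals.
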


The secondary polytope construction of the associahedron has a completely different flavor,
since a continuous deformation of the polygon produces a continuous deformation of the associahedron obtained and of its normal fan. 
Moreover, the secondary polytope of a convex polygon never has parallel facets (Proposition~\ref{prop:par_cont^I}, already noticed in {\cite[Sec.~5.3]{RoSaSt03}}).
This difference  is apparent comparing Figures~\ref{fig:hex-secondary} and~\ref{fig:ManyAssociahedra}. 

In Section~\ref{sec:combinatorial_c-cluster_complex} we relate the Santos construction with the $c$-cluster complexes and the denominator fans in cluster algebras of type $A$. The $c$-cluster complexes are simplicial complexes defined by Reading in~\cite{reading_clusters_2007} following ideas 
from~\cite{MRZ03}. We obtain a simple combinatorial 
description of these complexes for Coxeter groups of type $A$, and show that they are the normal fans of some of the Santos associahedra (Proposition~\ref{prop:c-cluster_fan}). In the general case, the normal fans of the Santos associahedra can be interpreted as the denominator fans in cluster algebras of type $A$~(Proposition~\ref{prop:denominator_fan}). 
This connection suggests a natural generalization of the Santos construction to a construction of generalized cluster-associahedra in arbitrary finite Coxeter groups (Question~\ref{question:CoxeterCatalanAssociahedra}).

Let us remark that, even if both the Hohlweg--Lange and the Santos constructions have very natural interpretations (and generalizations, modulo the question above) in the context of finite Coxeter groups, they have a significant difference; their normal 
fans lie in the \emph{root space} and the \emph{weight space} respectively. This is a bit hidden in Figure~\ref{fig:ManyAssociahedra}, where we have performed a linear transformation to draw the polytopes inscribed in the same cube.

One of the questions that remains is whether there is a common generalization of the Hohlweg--Lange and the Santos construction, which may perhaps produce even more examples of ``combinatorial'' associahedra. An exhaustive search produces,  besides the four $3$-associahedra of Figure~\ref{fig:ManyAssociahedra}, another four $3$-associahedra that arise by cutting three faces of a $3$-cube (see Figure~\ref{fig:MoreAssociahedra}). Do these admit a natural combinatorial interpretation as well?
 
\begin{figure}[ht]
	\centering
	\includegraphics
	[width=0.85\textwidth]
	{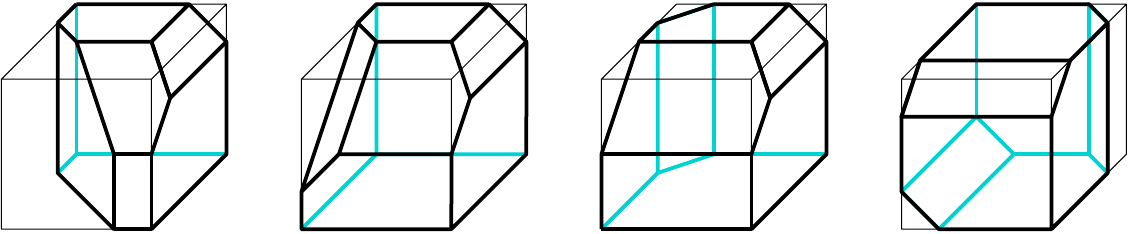}
	\caption{More $3$-associahedra inscribed in a $3$-cube. The $3$-associahedron is the only  simple $3$-polytope with nine facets all of which are quadrilaterals or pentagons.}
	\label{fig:MoreAssociahedra}
\end{figure}

As a final remark, part of the motivation of this paper was to try to find out \emph{what is the most ``natural'' or ``canonical'' realization of the associahedron}. The answer is not clear. If one wants to realize all the combinatorial symmetries  of the polytope (which, as we show in Lemma~\ref{lemma:automorphism_rotation-reflection}, are precisely the dihedral symmetries of the $(n+3)$-gon) then the best candidate is the secondary polytope of a regular polygon (Figure~\ref{fig:hex-secondary}). But if small integer coordinates for vertices and facet normals are seeked then you certainly want one of the Hohlweg--Lange or Santos associahedra (Figure~\ref{fig:ManyAssociahedra}). 
Among them, the Loday associahedron sticks out as the most ubiquitous in the literature~\cite{Bu08,HoLa07,Lee89,Lo04,PiSa11,Po05,RoSaSt03,ShniderSternberg93},
but the Chapoton--Fomin--Zelevinsky associahedron is the only one produced both by the Hohlweg--Lange and the Santos constructions.

\section{Two preliminaries}\label{sec:preliminaries}

Let $P_{n+3}$ be a convex $(n+3)$-gon.
An \emph{associahedron} $\mathrm{Ass}_n$ is an $n$-dimensional simple polytope whose 
poset of non-empty faces is isomorphic to the poset of non-crossing sets of diagonals 
of $P_{n+3}$, ordered by reverse inclusion. %
In particular, the vertices of the associahedron correspond to the \emph{triangulations}Ê~of~$P_{n+3}$ and its facets to the internal diagonals.%

\subsubsection{Normal isomorphism}

The goal of this paper is to compare different types 
of constructions of the associahedron, saying which 
ones produce equivalent polytopes, in a suitable sense.   
The following notion reflects the fact that the main constructions that we are going to discuss produce associahedra whose normal vectors have small integer coordinates, usually $0$ or $\pm1$. In these constructions the normal fan of the associahedron can be considered canonical, while there is still freedom in the right-hand sides of the inequalities. 
Recall that the normal fan of a polytope $P\subset \R^n$ is the partition of $(\R^*)^n$ into the normal cones of the different faces of $P$. Each $1$-dimensional cone (ray) in the normal fan is generated by the (exterior) normal vector to a facet of $P$ and the $n$-dimensional cones are normal to the vertices of $P$. Since all the polytopes in this paper are simple, their normal fans are \emph{simplicial}: every cone is generated by an independent set of vectors.
(See \cite[Sec.~7.1]{Zi} for further discussion of fans and of normal fans.) 

This leads us to use the following notion of equivalence.

\begin{definition}
\label{def:nl-equivalent} Two complete fans in real vector spaces $V$ and $V'$ of the same dimension are \emph{linearly isomorphic} if there is a linear isomorphism $V\to V'$ sending each cone of one to a cone of the other. Two polytopes $P$ and $P'$ are \emph{normally isomorphic}Ê~if they have linearly isomorphic normal fans.
\end{definition} 

Normal isomorphism is weaker than the usual notion of \emph{normal equivalence}, in which the two polytopes $P$ and $P'$ are assumed embedded \emph{in the same space} and their normal fans are required to be exactly the same, not only linearly isomorphic. One easy way to check that two polytopes are \emph{not} normally isomorphic is to show that no combinatorial isomorphism sends parallel facets of one to parallel facets of the other.

\subsubsection{Automorphisms of the associahedron}

The following lemma is very useful in order to 
disprove that two associahedra are normally isomorphic. 
It implies that all normal (or combinatorial, for that matter) isomorphisms between associahedra come from
isomorphisms between the $(n+3)$-gons defining them.

\begin{lemma}\label{lemma:automorphism_rotation-reflection}
All  automorphisms of the face lattice of the
associahedron $\mathrm{Ass}_n$ are induced
by symmetries of the $(n+3)$-gon. In particular, for $n\ge 2$,
the automorphism group of the face lattice of the
associahedron $\mathrm{Ass}_n$ is isomorphic to the dihedral group of order $2n+6$.
\end{lemma}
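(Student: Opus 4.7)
The plan is to single out the \emph{short} diagonals of $P_{n+3}$ — those cutting off a single vertex — as a canonical subset of the facets of $\mathrm{Ass}_n$, and then to show that any face-lattice automorphism $\phi$ is determined by a dihedral action on these.

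The first step uses the link structure. A diagonal $\delta$ splits $P_{n+3}$ into two subpolygons with $p$ and $q$ vertices, $p+q=n+5$, and the link of the facet $F_\delta$ in the face lattice of $\mathrm{Ass}_n$ is combinatorially the product $\mathrm{Ass}_{p-3}\times\mathrm{Ass}_{q-3}$. A short diagonal is exactly one with $p=3$; its link reduces to the single factor $\mathrm{Ass}_{n-1}$, while every other diagonal produces a link that is a nontrivial product of two smaller associahedra. Since a face-lattice automorphism preserves combinatorial link types, it must permute the $n+3$ short diagonals among themselves (for this we need $n\ge 2$, so that the short diagonals $d_i:=(i-1,i+1)$, with indices modulo $n+3$, are pairwise distinct). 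A direct check then shows that $d_i$ and $d_j$ cross — equivalently, the corresponding facets share no ridge — exactly when $|i-j|\equiv 1\pmod{n+3}$. Thus the incompatibility graph of short diagonals is the cycle $C_{n+3}$, so $\phi$ induces an element of $\mathrm{Aut}(C_{n+3})\cong D_{n+3}$. Surjectivity of the resulting map $\mathrm{Aut}(\mathrm{Ass}_n)\to D_{n+3}$ is immediate, since the face lattice of $\mathrm{Ass}_n$ is defined from the combinatorics of $P_{n+3}$, so every symmetry of the $(n+3)$-gon induces a face-lattice automorphism by transport of structure.

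The heart of the argument — and the step I would expect to require the most care — is injectivity: $\phi$ must be determined already by its action on short diagonals. For this I would prove the clean combinatorial characterization that for every diagonal $\delta=(i,j)$ of $P_{n+3}$, the short diagonals crossed by $\delta$ are exactly $d_i$ and $d_j$. This is a short case analysis on the relative positions of $i,j$ and $k,k\pm 1$, to be carried out uniformly so that it also covers the case $\delta=d_m$ short (where it reproduces the crossing rule $d_{m-1},d_{m+1}$ from the previous step). Since $\delta\mapsto\{d_i,d_j\}$ is then injective on all diagonals, each facet $F_\delta$ is recognized from its ridge-incidences with the short-diagonal facets, and $\phi$ is forced to coincide with the dihedral symmetry it induces on short diagonals. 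Combined with the previous step this yields the isomorphism $\mathrm{Aut}(\mathrm{Ass}_n)\cong D_{n+3}$ of order $2n+6$ for $n\ge 2$.
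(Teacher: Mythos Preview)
Your proof is correct and follows the same overall strategy as the paper: single out the short (length-$2$) diagonals as a distinguished class preserved by any automorphism, and then use them to pin down the action on all diagonals. The execution differs in two places. First, the paper identifies short diagonals by the elementary observation that the number of diagonals crossing $\delta$ determines its length; you instead use the structural fact that the facet $F_\delta$ is combinatorially $\mathrm{Ass}_{p-3}\times\mathrm{Ass}_{q-3}$. This is slick but leaves a small gap you should close: you need that $\mathrm{Ass}_{n-1}$ is not combinatorially a nontrivial product of smaller associahedra. A facet count does it --- with $a+b=n-1$ and $a,b\ge1$ one has $\tfrac{(n-1)(n+2)}{2}-\bigl(\tfrac{a(a+3)}{2}+\tfrac{b(b+3)}{2}\bigr)=ab>0$. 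Second, for the reconstruction step, the paper detects when two arbitrary diagonals share a vertex (via the existence of a length-$2$ diagonal crossing both) and then invokes $\mathrm{Aut}(K_{n+3}\setminus C_{n+3})=D_{n+3}$; your direct recovery of the endpoints of every diagonal from its two short-diagonal crossings is arguably cleaner and avoids that last appeal.
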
 

\begin{proof}
The second sentence follows from the first one since the only symmetry of the $k$-gon sending every diagonal to itself is the identity, for $k\ge 5$.
For the first sentence, suppose $\varphi$ is an automorphism of the face lattice of the associahedron $\mathrm{Ass}_n$,
and let $D$ be the set of all diagonals of a convex $(n+3)$-gon. 
$\varphi$ induces a natural bijection
\[
\widetilde \varphi : D \longrightarrow D
\] 
such that for any two diagonals $\delta, \delta' \in D$ we have:
\[
\delta \text{ crosses } \delta' \ \ 
\Longleftrightarrow \ \ 
\widetilde \varphi (\delta) \text{ crosses } \widetilde \varphi (\delta').
\] 

Call \emph{length} of a diagonal $\delta \in D$
the minimum between the lengths of 
the two paths that connect the 
two end points of $\delta$ 
on the boundary of the $(n+3)$-gon. 
Since the length of $\delta$ 
is determined by the number of diagonals 
that cross $\delta$,
\[
\text{length}(\delta)=\text{length}(\widetilde \varphi (\delta) ).
\] 

Now, two diagonals of the $(n+3)$-gon have a common vertex if and only if there is a diagonal of length $2$ crossing both of them. In particular, $\widetilde \varphi$ sends diagonals with a common vertex to diagonals with common vertex and can thus be understood as a graph automorphism of $D$, when $D$ is regarded as the graph $K_{n+3}\setminus C_{n+3}$ (the complete graph minus the Hamiltonian cycle along the boundary of the $(n+3)$-gon). The only such automorphisms are clearly the dihedral automorphisms of the cycle $C_{n+3}$.
\end{proof}

\section{Three constructions of the associahedron}\label{review}

In this section we review three very nice constructions in geometric combinatorics that have the associahedron as particular cases.

\subsection{The associahedron as a secondary polytope}
\label{section_secondarypolytope}

The secondary polytope is an ingenious construction motivated by the theory of hypergeometric functions as developed by Gelfand, Kapranov and Zelevinsky~\cite{GKZ94}, later generalized and explained in terms of fiber polytopes by Billera and Sturmfels~\cite{BS92}. In this section we recall the basic definitions and main results related to this topic, which yield in particular that the secondary polytope of any convex $(n+3)$-gon is an $n$-dimensional associahedron. For more detailed presentations we refer to~\cite[Ch. 5]{LoRaSa10}
and~\cite[Lect. 9]{Zi}.  All the subdivisions and triangulations of polytopes that appear in the following
are understood to be without new vertices.

\subsubsection{The secondary polytope construction} 

\begin{definition}[GKZ vector/secondary polytope] \label{def_GKZ}
Let $Q$ be a $d$-dimensional convex polytope with $n+d+1$ vertices. The \emph{GKZ vector} 
$v(t)\in \R^{n+d+1}$ of a triangulation $t$ of $Q$ is 
\begin{eqnarray*}
v(t)\ := \ \sum_{i=1}^{n+d+1} \text{vol} (\text{star}_t(i)) e_i 
\ \ =\ \ \sum_{i=1}^{n+d+1} \sum_{\sigma\in t\,:\, i\in\sigma}    \text{vol} (\sigma) e_i
\end{eqnarray*} 
The \emph{secondary polytope} of $Q$ is defined as
\[
\Sigma (Q)\ :=\ \text{conv}\{ v(t) : t \text{ is a triangulation of } Q \}.
\]
\end{definition}

\begin{theorem} [Gelfand--Kapranov--Zelevinsky~\cite{GZK90}] \label{theoGKZ}
Let $Q$ be a $d$-dimensional convex polytope with $m=n+d+1$ vertices. 
The secondary polytope $\Sigma (Q)$ has the following properties:
\begin{compactenum}[\rm(i)]
\item $\Sigma (Q)$ is an $n$-dimensional polytope.
\item The vertices of $\Sigma (Q)$ are in bijection with the regular triangulations of~$Q$.
\item The faces of $\Sigma (Q)$ are in bijection with the regular subdivisions of~$Q$.
\item The face lattice of $\Sigma (Q)$ is isomorphic to the lattice of regular subdivisions of $Q$, 
ordered by refinement.
\end{compactenum}
\end{theorem}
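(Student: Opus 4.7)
The plan is to exploit the standard ``weight vector'' duality between the secondary polytope and the regular subdivisions of $Q$. Write the vertices of $Q$ as $p_1, \ldots, p_m$ (with $m = n+d+1$), and for any $\omega \in \R^m$ view it as assigning the height $\omega_i$ to $p_i$. Given a triangulation $t$ of $Q$, let $g_{t,\omega} : Q \to \R$ be the unique piecewise-linear function that is affine on each simplex of $t$ and satisfies $g_{t,\omega}(p_i)=\omega_i$. Then the regular (coherent) subdivision $S(\omega)$ of $Q$ is, by definition, the one obtained by projecting the lower faces of the lifted point set $\{(p_i,\omega_i)\}$; equivalently, $S(\omega)$ is the coarsest subdivision such that $g_{t,\omega}$ does not depend on the refining triangulation $t$.

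The central computation, which drives everything, is the identity
\[
\langle \omega, v(t) \rangle
\ =\ \sum_{i=1}^{m} \omega_i \sum_{\sigma \in t: i \in \sigma} \mathrm{vol}(\sigma)
\ =\ \sum_{\sigma \in t} \mathrm{vol}(\sigma) \sum_{i\in\sigma} \omega_i
\ =\ (d+1)\int_Q g_{t,\omega}\,dx,
\]
obtained by swapping the order of summation and using that $\tfrac{1}{d+1}\sum_{i\in\sigma} \omega_i$ is the value of $g_{t,\omega}$ at the barycenter of $\sigma$. Two consequences follow immediately. First, applied to an arbitrary affine function $\omega_i=\ell(p_i)$, the right-hand side $(d+1)\int_Q \ell$ is independent of $t$; this gives $d+1$ linear relations satisfied by all GKZ vectors, so $\dim \Sigma(Q) \le m - (d+1) = n$, proving the upper bound in (i). Second, minimizing $\langle \omega, v(t)\rangle$ over triangulations $t$ amounts to minimizing $\int_Q g_{t,\omega}$, i.e.\ pushing the piecewise linear graph as low as possible; the pointwise minimum of the graphs $g_{t,\omega}$ over all triangulations is exactly the lower envelope function, which is linear precisely on the cells of $S(\omega)$. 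Hence the minimizers of $\langle\omega,\cdot\rangle$ on $\Sigma(Q)$ are exactly the GKZ vectors of triangulations that refine $S(\omega)$, and all such triangulations yield the same value.

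From this, (ii), (iii), and (iv) fall out by interpreting faces as normal cones: the face of $\Sigma(Q)$ with inner normal $\omega$ equals $\conv\{v(t): t \text{ refines } S(\omega)\}$, so vertices correspond to $\omega$ for which $S(\omega)$ is already a triangulation (the regular triangulations), and faces in general to regular subdivisions, with inclusion of faces matching refinement. The matching lower bound $\dim \Sigma(Q) \ge n$ in (i) also follows, since the map $\omega \mapsto S(\omega)$ partitions $\R^m$ into a fan (the secondary fan) whose lineality space is exactly the $(d+1)$-dimensional space of affine height functions, so non-trivial regular subdivisions do exist in enough directions. The step I expect to be delicate is the clean verification that $v(t)$ depends only on $S(\omega)$ for $t$ refining $S(\omega)$, and that different regular subdivisions give genuinely different faces; both reduce to the fact that $g_{t,\omega}$ agrees on each cell of $S(\omega)$ with the linear function prescribed by $\omega$ on that cell, so the integral in the central identity is insensitive to the choice of refinement, while a strict change of subdivision produces a strict change in the lower envelope on a positive-measure subset and hence a different functional value. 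Modulo these bookkeeping facts (which are made precise in \cite{GKZ94} and \cite[Ch.~5]{LoRaSa10}), the four parts of the theorem follow from the single computation above.
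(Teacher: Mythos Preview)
The paper does not prove this theorem at all: it is stated with attribution to~\cite{GZK90} and used as background, with no argument given in the text. So there is no ``paper's own proof'' to compare against.

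That said, your sketch is the standard one, and in fact the paper itself later quotes your central identity
\[
\langle \omega, v(t)\rangle \;=\; (d+1) \int_Q  g_{\omega,t}(x)\,dx
\]
(just before the proof of Proposition~\ref{prop:par_cont^I}, with a reference to \cite[Thm.~5.2.16]{LoRaSa10}) and uses it for a different purpose, namely to analyze facet normals of the secondary polytope of a polygon. Your derivation of the identity by swapping sums and recognizing the barycentric average is correct, and the consequences you draw---the $(d{+}1)$-dimensional lineality space giving $\dim\Sigma(Q)\le n$, and the face at inner normal $\omega$ being $\conv\{v(t): t \text{ refines } S(\omega)\}$---are the right ones. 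The parts you flag as delicate (that $v(t)$ depends only on $S(\omega)$ when $t$ refines $S(\omega)$, and that distinct regular subdivisions yield distinct faces) are genuinely where the work lies; your reduction to the behaviour of the lower envelope on positive-measure sets is the correct idea, and the details are indeed in \cite{GKZ94} and \cite[Ch.~5]{LoRaSa10}. The lower bound $\dim\Sigma(Q)\ge n$ is the one place your argument is a bit thin: saying ``non-trivial regular subdivisions exist in enough directions'' is not quite the same as exhibiting $n+1$ affinely independent GKZ vectors, though it follows once the normal-fan description is in place.
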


\subsubsection{The associahedron as the secondary polytope of a convex $(n+3)$-gon}

\begin{definition} The \emph{Gelfand--Kapranov--Zelevinsky associahedron} $\GKZ(Q)\subset \R^{n+3}$ is defined as
the ($n$-dimensional) secondary polytope of a convex $(n+3)$-gon $Q\subset\R^2$:
\[
\GKZ(Q):=\Sigma (Q).
\]
\end{definition}

\begin{corollary} [Gelfand--Kapranov--Zelevinsky~\cite{GZK90}]
$\GKZ(Q)$ is an $n$-dimensional associahedron.
\end{corollary}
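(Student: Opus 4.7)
The plan is to derive the corollary from Theorem~\ref{theoGKZ} by identifying the regular subdivisions of a convex $(n+3)$-gon $Q$ with the non-crossing sets of diagonals that index faces of the $n$-dimensional associahedron. The dimension is immediate: with $d=2$ and $m=n+3=n+d+1$, part~(i) of Theorem~\ref{theoGKZ} gives $\dim \Sigma(Q)=n$. So the task reduces to matching the face lattice of $\Sigma(Q)$, which by part~(iv) is the refinement poset of regular polyhedral subdivisions of $Q$ without new vertices, with the reverse-inclusion poset of non-crossing diagonal sets of $P_{n+3}$.

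The combinatorial bijection is straightforward. Since $Q$ is convex, any set of pairwise non-crossing diagonals of $Q$ partitions it into convex subpolygons and hence determines a unique polyhedral subdivision; conversely, the interior edges of any subdivision of $Q$ without new vertices form a set of non-crossing diagonals. Refinement corresponds to adding diagonals, so to reverse inclusion on diagonal sets. Applied to maximal sets (triangulations) this yields vertices of $\Sigma(Q)$; applied to the empty set it yields $\Sigma(Q)$ itself, matching the fact that an associahedron has its non-empty faces indexed by the reverse-inclusion poset of non-crossing diagonal sets (the empty set corresponding to the whole polytope).

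The only substantive point is to verify that every subdivision of $Q$ is \emph{regular}, so that none are lost in passing to $\Sigma(Q)$. For this, fix a set $D$ of non-crossing diagonals of $Q$ and note that the dual graph of the induced subdivision $S_D$ is a tree, because $D$ is a set of non-crossing chords in a disk. Pick any cell as root and assign it the zero linear function on $Q$; then propagate outward along the tree, introducing across each diagonal $\delta\in D$ a convex ``kink'' of arbitrarily chosen positive magnitude $a_\delta>0$. Since the dual graph is a tree, no consistency obstruction arises, and the resulting piecewise-linear function $\psi\colon Q\to\R$ is convex with maximal domains of linearity exactly the cells of $S_D$. Hence $S_D$ is a regular subdivision.

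The main potential obstacle is this regularity step, but the tree structure of the dual graph of the subdivision makes it painless; the remainder is a combinatorial dictionary between subdivisions and diagonal sets together with a direct application of Theorem~\ref{theoGKZ}. Simplicity of $\Sigma(Q)$ follows automatically from the face lattice identification, since each triangulation of $P_{n+3}$ contains exactly $n$ diagonals and so every vertex of $\Sigma(Q)$ is incident to exactly $n$ facets.
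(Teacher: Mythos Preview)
Your argument is correct. The paper itself offers no proof of this corollary: it simply records the statement with a citation to~\cite{GZK90}, treating it as an immediate consequence of Theorem~\ref{theoGKZ}. Your proposal supplies exactly the derivation the paper leaves implicit, namely the identification of (regular) subdivisions of a convex polygon with non-crossing diagonal sets, together with the key observation that for a convex polygon \emph{every} polyhedral subdivision is regular. Your regularity argument via the tree structure of the dual graph is the standard one and is sound; with that in hand, part~(iv) of Theorem~\ref{theoGKZ} gives the desired face-lattice isomorphism, and part~(i) the dimension.
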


There is one feature that distinguishes the associahedron as a secondary polytope from all the other constructions that we mention in this paper: the absence of parallel facets. 

\begin{proposition}[Rote--Santos--Streinu {\cite[Sec.~5.3]{RoSaSt03}}]
\label{prop:par_cont^I}
Let $Q$ be a convex $(n+3)$-gon. Then
$\GKZ(Q)$ has no parallel facets for $n\geq 2$.
\end{proposition}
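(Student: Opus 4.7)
The plan is to identify the outer normal rays to the facets of $\GKZ(Q) = \Sigma(Q)$ and show that no two are antipodal modulo the lineality of the secondary fan. From Theorem~\ref{theoGKZ} the facets of $\Sigma(Q)$ are in bijection with the coarsest nontrivial regular subdivisions of $Q$, which are precisely the single-diagonal subdivisions $\{P_1,P_2\}$ obtained by cutting $Q$ along a diagonal $\delta$. Modulo the $3$-dimensional lineality of affine functions on $\R^2$ restricted to $V(Q)$, the outer normal ray to the facet $F_\delta$ in the secondary fan is represented by the convex piecewise linear bend height function $\omega_\delta(v) := \max(0,\ell_\delta(v))$, where $\ell_\delta: \R^2 \to \R$ is an affine function vanishing along the line $L_\delta$ through $\delta$ and positive on the $P_2$ side. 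In this language, two facets $F_\delta, F_{\delta'}$ are parallel if and only if $\omega_\delta + \lambda\omega_{\delta'}$ is the restriction to $V(Q)$ of an affine function on $\R^2$ for some $\lambda > 0$.

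Suppose for contradiction such a $\lambda$ exists. Using the identity $2\max(0,\ell) = \ell + |\ell|$ and absorbing the affine parts, the hypothesis is equivalent to asking that the convex function $g(v) := |\ell_\delta(v)| + \lambda |\ell_{\delta'}(v)|$ agree on $V(Q)$ with some affine function $g_0$. Set $h := g - g_0$. Then $h$ is a piecewise linear function on $\R^2$ with convex bends of strictly positive magnitude along both $L_\delta$ and $L_{\delta'}$, and $h$ vanishes on $V(Q)$. Since $\delta$ and $\delta'$ are diagonals of the convex polygon $Q$, neither $L_\delta$ nor $L_{\delta'}$ crosses the interior of any edge of $Q$, so $h$ is affine on each edge; vanishing of $h$ at the two endpoints of each edge then forces $h \equiv 0$ on all of $\partial Q$.

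The next step is to upgrade $h|_{\partial Q}\equiv 0$ to $h\equiv 0$ on all of $Q$, which contradicts the nonzero bends. The function $h$ is affine on each of the (at most four) regions of $\R^2 \setminus (L_\delta \cup L_{\delta'})$, and for $n \geq 2$ we have $n+3 \geq 5$ vertices. A case analysis on the combinatorial type of the pair $(\delta,\delta')$---crossing, sharing an endpoint, or disjoint and non-crossing---shows that at least one of these regions contains an arc of $\partial Q$ made up of two or more edges on distinct supporting lines. Since any affine function on $\R^2$ vanishing on two non-parallel lines is identically zero, the corresponding affine piece of $h$ must vanish. Propagating this vanishing by continuity across the creases $L_\delta$ and $L_{\delta'}$ uniquely determines the affine pieces on the neighboring regions in terms of $\lambda$ and the bend magnitudes; evaluating $h=0$ at one further vertex (for instance, an endpoint of $\delta$ or $\delta'$ that is not on the other diagonal's line) then forces $\lambda = 0$, the desired contradiction.

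The main obstacle will be organizing this case analysis cleanly across the three combinatorial types of diagonal pairs, but in each case the underlying principle is the same: two independent convex bends along distinct diagonal lines cannot simultaneously be absorbed into an affine function on a vertex set of size at least five. The hypothesis $n\geq 2$ is sharp, since for $n=1$ (the quadrilateral case) the two crossing diagonals give rise to the two trivially parallel endpoints of the one-dimensional segment $\Sigma(Q)$.
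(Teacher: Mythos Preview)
Your approach is correct and shares the paper's core idea: identify the facet normal to $F_\delta$ (modulo the affine lineality) with the convex ``tent'' function $\omega_\delta=\max(0,\ell_\delta)$, then show no two such functions are antipodal on $V(Q)$. A few remarks on execution.

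First, the non-crossing cases (sharing an endpoint, or disjoint non-crossing) need no analysis at all: such $\delta,\delta'$ lie in a common triangulation, so $F_\delta\cap F_{\delta'}\neq\emptyset$ and the facets cannot be parallel. The paper dispatches these in one line and only analyzes crossing diagonals; you should do the same rather than carrying three cases through the piecewise-affine argument.

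Second, in the crossing case your global argument works but is more elaborate than needed. Once you know $h\equiv 0$ on the region $R_4$ containing the extra vertex $a$ (the arc from $s$ to $p$), crossing $L_\delta$ into the adjacent region $R_1$ gives $h|_{R_1}=2\ell_\delta$; since $q\in\overline{R_1}$ and $q\notin L_\delta$, you get $h(q)=2\ell_\delta(q)\neq 0$, contradicting $h(q)=0$ directly. There is no need to ``propagate'' further or to solve for $\lambda$; your last sentence (``forces $\lambda=0$'') slightly misdescribes where the contradiction lands. The paper reaches the same contradiction even more directly: it chooses the one-sided normalization $\omega^+_\delta$ (zero on the $a$-side) and simply records the values of $\omega^+_\delta,\omega^+_{\delta'}$ at the five points $a,p,q,r,s$, observing that any nontrivial combination has the zero/nonzero pattern $(0,0,\ast,\ast,0)$, which is never affine on five points in convex position.

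In short: same approach, but you can shorten your argument considerably by (i) disposing of non-crossing pairs immediately and (ii) reading off the contradiction at a single vertex once one region is identified as zero, as the paper does.
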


This was stated without proof by Rote, Santos and Streinu~\cite[Sec.~5.3]{RoSaSt03}. Here we offer a proof,
based on the understanding of the facet normals in secondary polytopes.
Let $Q$ be an arbitrary $d$-polytope with $n+d+1$ vertices $\{q_1,\dots,q_{n+d+1}\}$, so that $\GKZ(Q)$ lives in $\R^{n+d+1}$, although it has dimension $n$. In the theory of secondary polytopes one thinks of each linear functional $\R^{n+d+1}\to \R$  as a function $\omega:\operatorname{vertices}(Q)\to \R$ assigning a value $\omega(q_i)$ to each vertex $q_i$. In turn, to each triangulation $t$ of $Q$ (with no additional vertices) and any such $\omega$ one associates the function $g_{\omega,t}:Q \to \R$ which takes the value $\omega(q_i)$ at each $q_i$ and is affine linear on each simplex of $t$. That is, we use $t$ to piecewise linearly interpolate a function whose values $(\omega(q_1),\dots,\omega(q_n))$ we know on the vertices of~$Q$. The main result we need is the following equality
for every $\omega$ and every triangulation $t$  (see, e.g.,~\cite[Thm. 5.2.16]{LoRaSa10}):
\[
\langle \omega, v(t)\rangle =(d+1) \int_Q  g_{\omega,t}(x) dx.
\]

\noindent In particular:
\begin{compactitem}[$\circ$]
\item If $\omega$ is affine-linear (that is, if the points $\{(q_1,\omega_1),\dots, (q_{n+d+1},\omega_{n+d+1})\}\subset \R^{n+d+1}\times \R$ lie in a hyperplane) then  
$\langle \omega, v(t)\rangle$  is the same for all $t$. Moreover, the converse is also true: The affine-linear $\omega$'s form the lineality space of the normal fan of~$\GKZ(Q)$.

\item An $\omega$ lies in the linear cone of the (inner) normal fan of $\GKZ(Q)$ corresponding to a certain triangulation $t$ (that is, $\langle \omega, v(t)\rangle
\le \langle \omega, v(t')\rangle$ for every other triangulation $t'$) if and only if the function $g_{\omega,t}$ is convex; that is to say, if its graph is a convex hypersurface.
\end{compactitem}

\begin{proof} [Proof of Proposition~\ref{prop:par_cont^I}]
With the previous description in mind we can identify the facet normals of the secondary polytope of a polygon $Q$. For this we use the correspondence: 
\[
\begin{array}{rcl}
 \text{vertices}  & \longleftrightarrow & \text{triangulations of } Q \\
 \text{facets}  & \longleftrightarrow & \text{diagonals of } Q  \\
\end{array}
\]
For a given diagonal $\delta$ of~$Q$, denote by $F_\delta$ the facet of $\GKZ(Q)$ corresponding  to $\delta$. The vector normal to $F_\delta$ is not unique, since adding to any vector normal to $F_\delta$ an affine-linear $\omega_0$ we get another one. 
One natural choice is  
\[
\omega_\delta(q_i):=\operatorname{dist}(q_i,l_\delta),
\]
where $l_\delta$ is the line containing $\delta$ and $\operatorname{dist}(\cdot,\cdot)$ is the Euclidean distance.
Indeed, $\omega_\delta$ lifts the vertices of $Q$ on the same side of $\delta$ to lie in a half-plane in $\R^3$, with both half-planes having $\delta$ as their common intersection. That is, $g_{\omega_\delta,t}$ is convex for every $t$ that uses $\delta$. But another choice of normal vector is better for our purposes: choose one side of $l_\delta$ to be called positive and take
\[
\omega^+_\delta(q_i):=\begin{cases}\operatorname{dist}(q_i,l_\delta) &\text{if } q_i\in l_\delta^+\\ 0 & \text{if }   q_i\in l_\delta^-\end{cases}.
\]
For the end-points of $\delta$, which lie in both $l_\delta^+$ and $l_\delta^-$, there is no ambiguity since both definitions give the value $0$. Again, $\omega^+_\delta$ is a normal vector to $F_\delta$ since it lifts points on either side of $l_\delta$ to lie in a plane.

We are now ready to prove the theorem.
If two diagonals $\delta$ and $\delta'$ of $Q$ do not cross, then they can simultaneously be used in a triangulation. Hence, the corresponding facets $F_\delta$ and $F_{\delta'}$ meet, and they cannot be parallel. So, assume in what follows that $\delta$ and $\delta'$ are two crossing diagonals. Let $\delta=pr$ and $\delta'=qs$, with $pqrs$ being cyclically ordered along $Q$. Since $n\geq 2$ there is at least another vertex $a$ in $Q$. Without loss of generality suppose $a$ lies between $s$ and $p$. Now, we call negative the side of $l_\delta$ and the side of $l_{\delta'}$ containing $a$, and consider the normal vectors $\omega^+_\delta$ and $\omega^+_{\delta'}$ as defined above. They take the following values on the five points of interest:
\begin{eqnarray*}
\omega^+_\delta(a)=0,\quad\omega^+_\delta(p)=0,\quad\omega^+_\delta(q)>0,\quad\omega^+_\delta(r)=0,\quad\omega^+_\delta(s)=0,\\
\omega^+_{\delta'}(a)=0,\quad\omega^+_{\delta'}(p)=0,\quad\omega^+_{\delta'}(q)=0,\quad\omega^+_{\delta'}(r)>0,\quad\omega^+_{\delta'}(s)=0.
\end{eqnarray*}

Suppose that $F_\delta$ and $F_{\delta'}$ were parallel. This would imply that $\delta$ and $\delta'$ are linearly dependent or, more precisely, that there is a linear combination of them that gives an affine-linear $\omega$ (in the lineality space of the normal fan). But any (non-trivial) linear combination 
$\omega$ of $\omega^+_{\delta}$ and $\omega^+_{\delta'}$ necessarily takes the following values on our five points,
which implies that $\omega$ is not affine-linear:
\begin{eqnarray*}
\omega(a)=0,\quad\omega(p)=0,\quad\omega(q)\ne0,\quad\omega(r)\ne0,\quad\omega(s)=0.\\[-12mm]{}
\end{eqnarray*}
\end{proof}

\begin{remark}
\label{rem:weakly_convex}
The secondary polytope of  
points $\{q_1,\dots,q_{n+3}\}$ in the plane that are not  
the vertices of a convex polygon is, in general, not an associahedron.
But there is a case in which it is: 
if the points are  placed on the boundary of an $m$-gon (with 
$m\le n+3$) in such a way that no four of them lie on the same edge. 
By the arguments in the proof above, a necessary condition for
the associahedron obtained to have parallel facets is that $m\le 4$.
For $m=4$ we can obtain associahedra up to dimension 4 with
exactly one pair of parallel facets (those corresponding to the
main diagonals of the quadrilateral). For $m=3$, we can obtain 
2-dimensional associahedra with two pairs of parallel facets, and 
3-dimensional associahedra with three pairs of parallel facets. The latter is obtained
for six 
points $\{p,q,r,a,b,c\}$ with $p$, $q$ and $r$ being the vertices of 
a triangle and $a\in pq$, $b\in qr$ and $c\in ps$ intermediate 
points in the three sides. The associahedron obtained has the following three 
pairs of parallel facets:
\[
F_{pq} || F_{ar},\quad F_{qr} || F_{bs},\quad F_{ps} || F_{cq}.
\]
It is normally isomorphic to the right-most associahedron of Figure~\ref{fig:ManyAssociahedra}.
\end{remark}

\begin{remark}
\label{rem:pt-polytope}
Rote, Santos and Streinu~\cite{RoSaSt03} introduce a \emph{polytope of pseudo-triangulations} associated to each finite set $A$ of $m$ points (in general position) in the plane. This polytope lives in $\R^{2m}$ and has dimension $m+3+i$, where $i$ is the number of points interior to $\conv(A)$. They show that for points in convex position their polytope is affinely isomorphic to the secondary polytope for the same point set. Their constructions uses rigidity theoretic ideas: the edge-direction joining two neighboring triangulations $t$ and $t'$ is the vector of velocities of the (unique, modulo translation and rotation) infinitesimal flex of the embedded graph of $t\cap t'$.
\end{remark}

\subsection{The associahedron as a generalized permutahedron}

We here review two  constructions of the associahedron: one by Postnikov~\cite{Po05} and 
one by Rote--Santos--Streinu~\cite{RoSaSt03} (different from the one in Remark~\ref{rem:pt-polytope}).
The main goal of this section is to prove that these two constructions produce 
affinely equivalent results. In both constructions only the normal fan is fixed. Equivalently, there is freedom in the construction for the right-hand sides of facet-defining inequalities, and the space of valid right-hand sides is explicitly described. Specific right-hand sides produce, respectively, 
the realizations by Loday~\cite{Lo04} and Buchstaber~\cite{Bu08}, which turn out to be
affinely equivalent as well.   

\subsubsection{The Postnikov associahedron}

The Postnikov associahedron is a  special case of the family of generalized permutahedra studied in~\cite{Po05}. 
Recall that the standard $n$-dimensional permutahedron is the polytope
\begin{equation}
\begin{array}{rl}
\big\{  (x_1,\dots x_{n+1}) \in \R^{n+1} : 
& \sum\limits_{i\in S} x_i \geq \binom{| S | + 1)}{2}   \text{ for all } S\subsetneq[n+1], \quad \sum\limits_{i\in [n+1]} x_i=\binom{n+2}2
\big\}.
\end{array}
\label{eq:permutahedron}
\end{equation}
Equivalently, it equals 
the convex hull of the $n!$ points in $\R^{n+1}$ obtained by permuting coordinates in $(1,\dots,n+1)$ and also 
the Minkowski sum of the edges of the standard simplex $\{  (x_1,\dots, x_{n+1}) \in \R^{n+1} : 
 \sum x_i =1,\, x_i\ge 0\}$.
A generalized permutahedron is a polytope with facet normals contained in those of the standard permutahedron and such that the collection of right hand side parameters of the defining inequalities belongs to (the closure of) the deformation cone of the standard permutahedron. Besides associahedra, generalized permutahedra include many interesting polytopes such as cyclohedra, graph associahedra, nestohedra, and all Minkowski sums of dilated faces of a standard simplex.  

It follows from the Minkowski sum description of the permutahedron that every positively weighted Minkowski sum of arbitrary faces of the standard simplex is a generalized permutahedron (the converse is only partially true; see Remark~\ref{rem:minkowski-sum}). Following Postnikov~\cite{Po05} we use this to define the Loday and Postnikov associahedra:

\begin{definition}
For any vector $\mathbf{a}=\{\mathrm{a}_{ij}>0: 
1\leq i\leq j \leq n+1\}$ of positive parameters we call 
\emph{Postnikov associahedron} the polytope
\[
\Post(\mathbf{a}):=\sum_{1\leq i\leq j\leq n+1}\mathrm{a}_{ij}\Delta_{[i,\dots ,j]},
\]
where $\Delta_{[i,\dots ,j]}$ denotes the simplex conv$\{e_i,e_{i+1},\dots , e_j\}$ in $\R^{n+1}$. 
The special case where $\mathrm{a}_{ij}=1$ for all $i,j$ is the \emph{Loday associahedron}.
\end{definition}

\begin{proposition}[Postnikov {\cite[Sec.~8.2]{Po05}}] 
$\Post(\mathbf{a})$ is an $n$-dimensional associahedron. 
\end{proposition}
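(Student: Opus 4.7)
The plan is to verify two properties: $\Post(\mathbf{a})$ has dimension $n$, and its face lattice is isomorphic to the lattice of non-crossing sets of diagonals of $P_{n+3}$ under reverse inclusion. Dimension is immediate: the summand $\mathrm{a}_{1,n+1}\Delta_{[1,n+1]}$ is an $n$-simplex lying in a translate of the hyperplane $\{\sum x_i=\mathrm{const}\}$, and every other $\Delta_{[i,j]}$ lies in the same hyperplane, so $\Post(\mathbf{a})$ is $n$-dimensional.

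For the combinatorial type I would use the standard fact that the normal fan of a positively weighted Minkowski sum equals the common refinement of the individual normal fans. Hence the face lattice of $\Post(\mathbf{a})$ does not depend on the positive weights $\mathbf{a}$, and it suffices to handle the Loday case $\mathbf{a}\equiv\mathbf{1}$. The normal fan of $\Delta_{[i,j]}$, modulo the lineality direction $(1,\dots,1)$, has one maximal cone for each $k\in[i,j]$, namely $C_{i,j}^k:=\{\omega:\omega_k=\max_{\ell\in[i,j]}\omega_\ell\}$. Thus a generic $\omega$ assigns to each interval $[i,j]$ an ``argmax'' index $k(i,j)\in[i,j]$, and the corresponding vertex of $\Post(\mathbf{1})$ is $\sum_{[i,j]}e_{k(i,j)}$.

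The core combinatorial step is to biject these argmax data with triangulations of $P_{n+3}$. Labelling the vertices of $P_{n+3}$ by $0,1,\dots,n+2$ and identifying each triangulation $T$ with the dual binary tree $\tau_T$ whose internal nodes are labelled $1,\dots,n+1$ in in-order, one defines $k(i,j)$ to be the in-order label of the lowest common ancestor of $\{i,i+1,\dots,j\}$ in $\tau_T$; equivalently, $\omega$ can be taken to weight each internal node by the negative of its depth in $\tau_T$. Summing over intervals recovers Loday's classical vertex formula $\sum_k L_k R_k\,e_k$, where $L_k,R_k$ count the leaves of the left and right subtrees at node $k$. The bijection then extends from vertices to the full face lattice, because the cells of the common refinement correspond to partial (non-generic) argmax data, which in turn match coarser polygonal subdivisions of $P_{n+3}$ obtained by erasing some diagonals of $T$.

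The main obstacle is this last verification: one must show that the argmax function coming from a generic $\omega$ really is the LCA function of some triangulation (equivalently, that the implied chords are mutually non-crossing), that every triangulation is so realised, and that crossing a single wall of the common refinement corresponds to flipping a single diagonal. This is the combinatorial heart of Postnikov's treatment of graph-associahedra and nestohedra~\cite{Po05}; once granted, the face-lattice isomorphism follows and $\Post(\mathbf{a})$ is identified as an $n$-dimensional associahedron.
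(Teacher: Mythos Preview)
The paper does not supply its own proof of this proposition: it is stated with an attribution to Postnikov~\cite[Sec.~8.2]{Po05} and no proof environment follows. So there is nothing in the paper to compare your argument against line by line.

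That said, your sketch is essentially the standard route and is correct in outline. The reduction to $\mathbf a=\mathbf 1$ via the common-refinement fact is the right move, and your description of the maximal cones of the refined fan by argmax data is accurate. The bijection you set up---a generic $\omega$ with distinct coordinates determines a binary search tree (treap) on $[1,n+1]$ whose root is the global argmax, and the argmax on any subinterval $[i,j]$ is the LCA of $i$ and $j$ in that tree---is exactly Loday's correspondence, and it does recover the vertex coordinates $\sum_k L_kR_k\,e_k$. This is the argument one extracts from~\cite{Lo04,Po05}.

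The one place where your write-up is thinner than it should be is the passage from vertices to the full face lattice. Saying that ``partial argmax data match coarser polygonal subdivisions'' is the right intuition, but the clean way to finish is to observe that the common-refinement fan is simplicial (each maximal cone is cut out by the $n$ inequalities $\omega_{k(i,j)}\ge\omega_\ell$ coming from the $n$ covering relations in the tree), so the polytope is simple, and then it suffices to check that adjacency of vertices corresponds to a single diagonal flip. You allude to this (``crossing a single wall \dots\ corresponds to flipping a single diagonal'') but do not verify it; that verification is short once one notes that moving $\omega$ across a facet of a maximal cone swaps exactly one parent--child pair in the treap, which is a tree rotation, which is a diagonal flip. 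With that step made explicit the proof is complete.
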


\begin{figure}[ht]
\begin{centering}
\resizebox{!}{50mm}{
\begin{tikzpicture}[scale = 1.5]

\draw[line width = 1pt] ( 0,0) -- (5,0) -- (2.5,4.33) -- (0,0);
\draw (0,0)node[scale = 1.3, anchor =north ]{600};
\draw (5,0)node[scale = 1.3, anchor =north ]{060};
\draw (2.5,4.33)node[scale = 1.3, anchor =south ]{006};
\draw[line width = 1pt, loosely dotted] (0.42,0.72) -- (4.58,0.72) ;
\draw[line width = 1pt, loosely dotted] (1.26,2.17) -- (2.5,0);
\draw[line width = 1pt, loosely dotted] (1.26,2.17) -- (3.75,2.17)--(2.5,0);
\draw[line width = 1pt, loosely dotted] (0.82,0) -- (2.93,3.59);
\draw[line width = 1pt, loosely dotted] (4.15,0) -- (2.09,3.63);

\draw[line width = 1pt, color = violet] (2.08, 0.72) -- (3.74,0.72) -- (2.92,2.17) -- (2.09,2.17) -- (1.67,1.44) -- (2.08,0.72); 

\filldraw[color = violet] (2.08,0.72) circle (0.04)node[scale = 1.3, anchor =north, color = black, xshift = -0.2cm ]{321};
\draw[line width = 1pt] (2.08,0.72) circle (0.04);

\filldraw[color = violet] (3.74,0.72) circle (0.04)node[scale = 1.3, anchor =north, color = black, xshift = 0.5cm ]{141};
\draw[line width = 1pt] (3.74,0.72) circle (0.04);

\filldraw[color = violet] (2.92,2.17) circle (0.04)node[scale = 1.3, anchor =south, color = black, xshift = 0.2cm ]{123};
\draw[line width = 1pt] (2.92,2.17) circle (0.04);

\filldraw[color = violet] (2.09,2.17)  circle (0.04)node[scale = 1.3, anchor =south, color = black, xshift =- 0.2cm ]{213};
\draw[line width = 1pt] (2.09,2.17) circle (0.04);

\filldraw[color = violet] (1.67,1.44)  circle (0.04)node[scale = 1.3, anchor =east, color = black]{312};
\draw[line width = 1pt] (1.67,1.44) circle (0.04);

\newdimen\L
\L=0.2cm
\newcommand{\smallpentagon}[4]{

\draw[xshift=#1, yshift = #2,rotate =#3, ] (0:\L)
\foreach \x in {72,144,...,360} {
      -- (\x:\L)
 } -- cycle (90:\L)  ;

\foreach \x in {0,72,144,...,360} {
     \draw[fill,xshift=#1, yshift = #2,rotate =#3] (\x:\L) circle (0.005);
     
 } ;


\ifcase #4

\or

\draw[xshift=#1, yshift = #2,rotate =#3] (0:\L) -- (144:\L);

\or

\draw[,xshift=#1, yshift = #2,rotate =#3] (0:\L) -- (144:\L);
\draw[,xshift=#1, yshift = #2,rotate =#3] (144:\L) -- (288:\L);

\fi

}

\draw[line width = 1pt, color = violet, xshift = 5cm] (2.08, 0.72) -- (3.74,0.72) -- (2.92,2.17) -- (2.09,2.17) -- (1.67,1.44) -- (2.08,0.72); 

\filldraw[color = violet] (7.08,0.72) circle (0.03);
\draw[line width = 1pt] (7.08,0.72) circle (0.03);

\filldraw[color = violet] (8.74,0.72) circle (0.03);
\draw[line width = 1pt] (8.74,0.72) circle (0.03);

\filldraw[color = violet] (7.92,2.17) circle (0.03);
\draw[line width = 1pt] (7.92,2.17) circle (0.03);

\filldraw[color = violet] (7.09,2.17)  circle (0.03);
\draw[line width = 1pt] (7.09,2.17) circle (0.03);

\filldraw[color = violet] (6.67,1.44)  circle (0.03);
\draw[line width = 1pt] (6.67,1.44) circle (0.03);

\smallpentagon{7cm}{0.4cm}{-90}{2};
\smallpentagon{8.9cm}{0.4cm}{-234}{2};
\smallpentagon{8.1cm}{2.4cm}{-18}{2};
\smallpentagon{6.9cm}{2.4cm}{-162}{2};
\smallpentagon{6.3cm}{1.4cm}{54}{2};
\end{tikzpicture}
}
\end{centering}
\caption{The Loday associahedron $\Post (\mathbf 1)$ with the coordinates of its vertices.}
\label{fig:loday}
\end{figure}

Using a special case of~\cite[Prop.~6.3]{Po05}, the Postnikov associahedron can be described in terms of inequalities as follows.

\begin{lemma}[Postnikov~\cite{Po05}] 
\label{lemma:post-inequalities}
\begin{eqnarray*}
\Post (\mathbf a) = \{(x_1,\dots,x_{n+1})\in \R^{n+1}: 
\sum_{p < i < q} x_i \geq f_{p,q} \hspace{3mm}
\text{ for } 0 \leq p < q \leq n+2,
\\ x_1+\cdots+x_{n+1}=f_{0,n+2}\},
\end{eqnarray*}
where $f_{p,q}=\sum_{p<i\leq j<q} \mathrm a_{i,j} $.
\end{lemma}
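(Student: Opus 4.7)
The plan is to exploit the Minkowski-sum structure of $\Post(\mathbf{a})$. For any polytope $P\subset\R^{n+1}$ and any $u\in\R^{n+1}$, let $m_P(u):=\min_{x\in P}\langle u,x\rangle$. Since $m$ is additive under Minkowski sum and positively homogeneous, I would first compute $m_{\Delta_{[i,\dots,j]}}$ on the characteristic vector $\mathbf{1}_T$ of a subset $T\subseteq[n+1]$: because $\Delta_{[i,\dots,j]}=\conv\{e_i,\dots,e_j\}$, this minimum equals $1$ if $[i,j]\subseteq T$ and $0$ otherwise. Summing (with weights $\mathrm{a}_{ij}$) over the Minkowski summands gives
\[
m_{\Post(\mathbf{a})}(\mathbf{1}_T)\ =\ \sum_{[i,j]\subseteq T}\mathrm{a}_{ij}.
\]

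The inequality $\sum_{k\in T}x_k\ge m_{\Post(\mathbf{a})}(\mathbf{1}_T)$ then holds on $\Post(\mathbf{a})$ for every $T\subseteq[n+1]$. Taking $T=\{p+1,\dots,q-1\}$ the right-hand side becomes precisely $f_{p,q}$, which yields the inequalities of the lemma; taking $T=[n+1]$, together with the observation that each $\Delta_{[i,\dots,j]}$ lies in the hyperplane $\{\sum x_k=1\}$, yields the equality $\sum_k x_k=f_{0,n+2}$. This establishes the containment ``$\subseteq$''.

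For the reverse containment I would show that all subset inequalities with $T$ not a consecutive interval are redundant. If $T$ decomposes into its maximal consecutive runs $T=T_1\sqcup\cdots\sqcup T_r$, then every interval $[i,j]\subseteq T$ is contained in a single $T_s$, hence
\[
\sum_{[i,j]\subseteq T}\mathrm{a}_{ij}\ =\ \sum_{s=1}^r\sum_{[i,j]\subseteq T_s}\mathrm{a}_{ij},\qquad
\sum_{k\in T}x_k\ =\ \sum_{s=1}^r\sum_{k\in T_s}x_k,
\]
so the $T$-inequality is the sum of the interval inequalities associated with the $T_s$. Because the full collection of subset inequalities $\sum_{k\in T}x_k\ge m_{\Post(\mathbf{a})}(\mathbf{1}_T)$ (for $\varnothing\neq T\subsetneq[n+1]$), together with the hyperplane equation, already cuts out $\Post(\mathbf{a})$---this is the general Minkowski-sum description of any generalized permutahedron---dropping the redundant non-interval ones yields exactly the system in the statement.

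The step I expect to require the most care is the completeness of ``$\supseteq$'': one must be certain that no additional inequalities beyond the subset ones are needed to cut out a Minkowski sum of faces of the standard simplex. The cleanest way to handle this is to invoke the fact that the support function of a polytope on a spanning set of directions (here, the $2^{n+1}-2$ proper subset indicators) determines the polytope uniquely inside the ambient hyperplane, so the additivity identity above reduces the defining system exactly to the interval family. Once this structural fact is granted, the remaining computations are the explicit support-function evaluations sketched above, which are essentially mechanical.
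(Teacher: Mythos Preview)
The paper does not supply its own proof of this lemma; it simply attributes the result to Postnikov, stating that it is ``a special case of~\cite[Prop.~6.3]{Po05}.'' Your argument is therefore a genuine addition rather than a re-derivation, and its overall structure is sound: compute the support function of each Minkowski summand on the indicator vectors $\mathbf{1}_T$, use additivity to get the support function of $\Post(\mathbf a)$, specialize $T$ to intervals to obtain the displayed inequalities and the hyperplane equation, and then discard the non-interval inequalities as sums of interval ones.

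There is one real wobble in your justification of the reverse containment. The assertion that ``the support function of a polytope on a spanning set of directions \dots\ determines the polytope uniquely inside the ambient hyperplane'' is false as stated: for example, a square and a regular octagon centered at the origin have identical support-function values on the four coordinate directions. What actually makes the subset inequalities sufficient here is not that the $\mathbf{1}_T$ span, but that every facet normal of $\Post(\mathbf a)$ is among them. This holds because the normal fan of a Minkowski sum is the common refinement of the normal fans of its summands; each $\Delta_{[i,\dots,j]}$ is a face of the standard simplex, hence its normal fan coarsens the braid fan, and therefore so does the normal fan of $\Post(\mathbf a)$. The rays of the braid fan (modulo $\mathbf{1}_{[n+1]}$) are precisely the $\mathbf{1}_T$ for proper nonempty $T\subsetneq[n+1]$, so these directions already contain all facet normals. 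You in fact invoke the correct statement earlier (``this is the general Minkowski-sum description of any generalized permutahedron''); simply replace the ``spanning set'' sentence with the normal-fan reasoning above and the proof is complete.
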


Conversely, the Minkowski weights $\mathrm a_{i,j}$ of a Postnikov associahedron defined by right-hand sides $f_{p,q}$ can be obtained by M\"obius inversion. This is thoroughly analyzed in~\cite{Lange11}.

The facet of $\Post (\mathbf a)$ labeled by a pair $({p,q})$
corresponds to the diagonal $pq$
of an $(n+3)$-gon with vertices labeled 
in counterclockwise direction
from 0 to $n+2$. 
It is obvious from the description in Lemma~\ref{lemma:post-inequalities} that
$\Post(\mathbf{a})$ has exactly $n$ pairs of parallel facets. These correspond to the pairs of diagonals 
$( \{ 0,i+1 \}, \{ i,n+2 \})$ for $1\leq i \leq n$, as illustrated in 
{\normalfont Figure~\ref{Parallel_ass^III}}.
This is a particular case of Proposition~\ref{prop:parallel_typeI}.

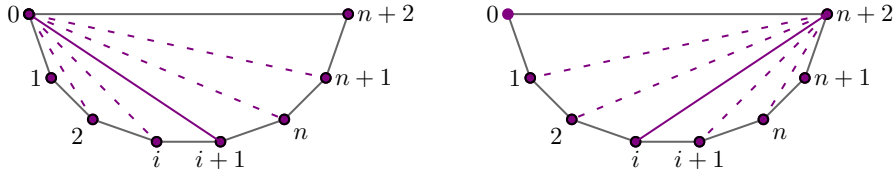
\begin{figure}[ht]
\begin{centering}
\resizebox{!}{24mm}{

\begin{tikzpicture}[scale = 0.5]

\draw[line width = 0.9pt,color = black!60](-5,4)--(-4.3,2)--(-3,0.7)-- (-1,0) -- (1,0) -- (3,0.7)--(4.3,2)--(5,4) -- (-5,4) ;
\filldraw[line width =0.9pt,color = violet] (-1,0) circle (0.15)node[anchor = north, color = black, scale = 1] {$i$};;
\draw[line width =0.9pt] (-1,0) circle (0.15);
\filldraw[line width =0.9pt,color = violet] (1,0) circle (0.15) node[anchor = north, color = black, scale = 1] {$i+1$};;
\draw[line width =0.9pt] (1,0) circle (0.15);
\filldraw[line width =0.9pt,color = violet] (3,0.7) circle (0.15)node[anchor =north  west, color = black, scale = 1] {$n$};;
\draw[line width =0.9pt] (3,0.7) circle (0.15);
\filldraw[line width =0.9pt,color = violet] (4.3,2) circle (0.15)node[anchor = west, color = black, scale = 1] {$n+1$};;
\draw[line width =0.9pt] (4.3,2) circle (0.15);
\filldraw[line width =0.9pt,color = violet] (5,4) circle (0.15)node[anchor = west, color = black, scale = 1] {$n+2$};
\draw[line width =0.9pt] (5,4) circle (0.15);
\filldraw[line width =0.9pt,color = violet] (-3,0.7) circle (0.15)node[anchor = north  east, color = black, scale = 1] {$2$};
\draw[line width =0.9pt] (-3,0.7) circle (0.15);
\filldraw[line width =0.9pt,color = violet] (-4.3,2) circle (0.15)node[anchor =  east, color = black, scale = 1] {$1$};
\draw[line width =0.9pt] (-4.3,2) circle (0.15);
\filldraw[line width =0.9pt,color = violet] (-5,4) circle (0.15) node[left, color = black, scale = 1] {$0$};
\draw[line width =0.9pt] (-5,4) circle (0.15);


\draw[line width = 0.9pt,loosely dashed, color=violet](-5,4) -- (4.3,2);
\draw[line width = 0.9pt,loosely dashed, color=violet](-5,4) -- (-3,0.7);
\draw[line width = 0.9pt,loosely dashed, color=violet](-5,4) -- (-1,0);
\draw[line width = 0.9pt, color=violet](-5,4) -- (1,0);
\draw[line width = 0.9pt,loosely dashed, color=violet](-5,4) -- (3,0.7);

\draw[line width = 0.9pt,color = black!60, xshift = 15cm](-5,4)--(-4.3,2)--(-3,0.7)-- (-1,0) -- (1,0) -- (3,0.7)--(4.3,2)--(5,4) -- (-5,4) ;
\filldraw[line width =0.9pt,color = violet,xshift = 15cm] (-1,0) circle (0.15)node[anchor = north, color = black, scale = 1] {$i$};;
\draw[line width =0.9pt,xshift = 15cm] (-1,0) circle (0.15);
\filldraw[line width =0.9pt,color = violet,xshift = 15cm] (1,0) circle (0.15) node[anchor = north, color = black, scale = 1] {$i+1$};;
\draw[line width =0.9pt,xshift = 15cm] (1,0) circle (0.15);
\filldraw[line width =0.9pt,color = violet,xshift = 15cm] (3,0.7) circle (0.15)node[anchor =north  west, color = black, scale = 1] {$n$};;
\draw[line width =0.9pt,xshift = 15cm] (3,0.7) circle (0.15);
\filldraw[line width =0.9pt,color = violet,xshift = 15cm] (4.3,2) circle (0.15)node[anchor = west, color = black, scale = 1] {$n+1$};;
\draw[line width =0.9pt,xshift = 15cm] (4.3,2) circle (0.15);
\filldraw[line width =0.9pt,color = violet,xshift = 15cm] (5,4) circle (0.15)node[anchor = west, color = black, scale = 1] {$n+2$};
\draw[line width =0.9pt,xshift = 15cm] (5,4) circle (0.15);
\filldraw[line width =0.9pt,color = violet,xshift = 15cm] (-3,0.7) circle (0.15)node[anchor = north  east, color = black, scale = 1] {$2$};
\draw[line width =0.9pt,xshift = 15cm] (-3,0.7) circle (0.15);
\filldraw[line width =0.9pt,color = violet,xshift = 15cm] (-4.3,2) circle (0.15)node[anchor =  east, color = black, scale = 1] {$1$};
\draw[line width =0.9pt,xshift = 15cm] (-4.3,2) circle (0.15);
\filldraw[line width =0.9pt,color = violet,xshift = 15cm] (-5,4) circle (0.15) node[left, color = black, scale = 1] {$0$};
\draw[line width =0.9pt] (-5,4) circle (0.15);


\draw[line width = 0.9pt,loosely dashed, color=violet,xshift = 15cm](5,4) -- (-4.3,2);
\draw[line width = 0.9pt,loosely dashed, color=violet,xshift = 15cm](5,4) -- (-3,0.7);
\draw[line width = 0.9pt, color=violet,xshift = 15cm](5,4) -- (-1,0);
\draw[line width = 0.9pt,loosely dashed, color=violet,xshift = 15cm](5,4) -- (1,0);
\draw[line width = 0.9pt,loosely dashed, color=violet,xshift = 15cm](5,4) -- (3,0.7);
\end{tikzpicture}

}
\end{centering}
\caption{Diagonals of the $(n+3)$-gon that correspond to the pairs of parallel facets of 
		both $\Post(\mathbf{a})$ and $\RSS(\mathbf{g})$. }
\label{Parallel_ass^III}
\end{figure}

\subsubsection{The Rote--Santos--Streinu associahedron}

By ``generalizing'' the construction of Remark~\ref{rem:pt-polytope} to sets of points along a line, 
Rote, Santos and Streinu~\cite{RoSaSt03} obtain a second realization of 
the associahedron.
 
\begin{definition}\label{def_RSSassociahedron}
The \emph{Rote--Santos--Streinu associahedron} is the polytope
\[
\RSS(\mathbf g)=\{(y_0,y_1,\dots , y_{n+1})\in \R^{n+2} : 
y_j-y_i \geq g_{i,j} \text{ for } j>i,\ 
y_0= 0,\ 
y_{n+1}=g_{0,n+1}
\},
\]
where $\mathbf g =(g_{i,j})_{0 \leq i < j \leq n+1}$ 
is any vector with real coordinates satisfying
\begin{eqnarray}
g_{i,l} + g_{j,k}  >   g_{i,k} + g_{j,l} & \text{ for all } & i<j \leq k < l, 
\label{eqn_RSS_rhs1}
 \\
\label{eqn_RSS_rhs2}
g_{i,l} > g_{i,k} + g_{k,l} & \text{ for all }  & i < k < l.
\end{eqnarray}
\end{definition}

\begin{proposition}[Rote--Santos--Streinu {~\cite[Sec.~5.3]{RoSaSt03}}]
If the vector $\mathbf g$ satisfies  inequalities {\rm (\ref{eqn_RSS_rhs1})}
 and {\rm (\ref{eqn_RSS_rhs2})}
then
$\RSS (\mathbf g)$ is an $n$-dimensional associahedron. 
\end{proposition}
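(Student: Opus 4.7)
The plan is to reduce this to the (already-established) Postnikov construction via an explicit affine isomorphism. First, I would change coordinates by setting $x_k := y_k - y_{k-1}$ for $k = 1, \dots, n+1$. Because $y_0$ is fixed to $0$, this is an affine isomorphism from the affine subspace $\{y_0 = 0\}\subset \R^{n+2}$ onto $\R^{n+1}$ that sends the hyperplane $\{y_{n+1} = g_{0,n+1}\}$ to $\{\sum_k x_k = g_{0,n+1}\}$. Under this change of variables, the RSS inequality $y_j - y_i \geq g_{i,j}$ becomes $\sum_{i < k \leq j} x_k \geq g_{i,j}$; after re-indexing $p := i$, $q := j+1$ this reads $\sum_{p < k < q} x_k \geq g_{p,q-1}$, which is precisely the Postnikov inequality of Lemma~\ref{lemma:post-inequalities} with right-hand side $f_{p,q} := g_{p,q-1}$.

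The second step is to reverse-engineer the weight vector $\mathbf a = (a_{i,j})$ from the relation $f_{p,q} = \sum_{p < i \leq j < q} a_{i,j}$ by Möbius inversion on the poset of intervals. A short computation gives, for $i < j$,
\[
a_{i,j} \ =\ g_{i-1,j} - g_{i-1,j-1} - g_{i,j} + g_{i,j-1},
\]
and for $i = j$ an analogous expression involving three consecutive $g$-values. The strict inequalities (\ref{eqn_RSS_rhs1}) applied to the four indices $i-1 < i \leq j-1 < j$ yield $a_{i,j} > 0$ in the off-diagonal case, while (\ref{eqn_RSS_rhs2}) takes care of the diagonal case $a_{i,i} > 0$. (Even if some $a_{i,i}$ turned out to be non-positive, the summand $a_{i,i}\Delta_{\{i\}}$ is just a translation, hence irrelevant to the normal fan; but the strict conditions give genuine positivity.)

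With $\mathbf a$ a vector of positive weights, the preceding proposition of Postnikov guarantees that $\Post(\mathbf a)$ is an $n$-dimensional associahedron. Pulling this structure back along the affine isomorphism constructed above shows the same for $\RSS(\mathbf g)$, which completes the proof.

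The principal technical obstacle I expect is bookkeeping: matching the RSS index range $\{0,\dots,n+1\}$ with Postnikov's index range $\{0,\dots,n+2\}$ (shifted by one in the $q$ direction), and verifying that the two strict conditions on $\mathbf g$ are exactly tailored to the positivity requirements of all $\binom{n+1}{2}+(n+1)$ Minkowski weights $a_{i,j}$, with no condition left over and none missing. Once the dictionary is set up correctly, the substantive content of the proposition is then carried entirely by the Postnikov result.
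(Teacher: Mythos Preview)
Your reduction via $x_k=y_k-y_{k-1}$ to the Postnikov description is exactly what the paper does in the Theorem immediately following this Proposition (with the cosmetic difference that the paper's map $y_k=\sum_{i\le k}(x_i-i)$ carries an extra translation); the paper itself does not prove the Proposition independently but cites~\cite{RoSaSt03}.

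One correction to your bookkeeping: condition~(\ref{eqn_RSS_rhs2}) does \emph{not} give $a_{i,i}>0$. In fact $a_{i,i}=f_{i-1,i+1}=g_{i-1,i}$, and neither hypothesis controls the sign of this. What (\ref{eqn_RSS_rhs2}) actually handles is the \emph{near-diagonal} case
\[
a_{i,i+1}\ =\ g_{i-1,i+1}-g_{i-1,i}-g_{i,i+1},
\]
where your four-term M\"obius formula degenerates because one of the terms is $g_{i,i}$ (read as~$0$); condition~(\ref{eqn_RSS_rhs1}) with strict middle inequality covers the remaining $a_{i,j}$ with $j\ge i+2$. So your parenthetical remark that the singleton summands $a_{i,i}\Delta_{\{i\}}$ are pure translations is not a side comment but the actual fix on the diagonal --- with it (or after a harmless further translation to force $a_{i,i}>0$ so that Postnikov's hypothesis applies verbatim) the argument goes through.
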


A particular example of valid parameters $\mathbf g$ is 
given by $\mathbf {g_0}$: $g_{i,j}=i(i-j)$. 
In this case we get the realization of the associahedron 
introduced by Buchstaber in
\cite[Lect.~\rm {II} Sec.~5]{Bu08}.

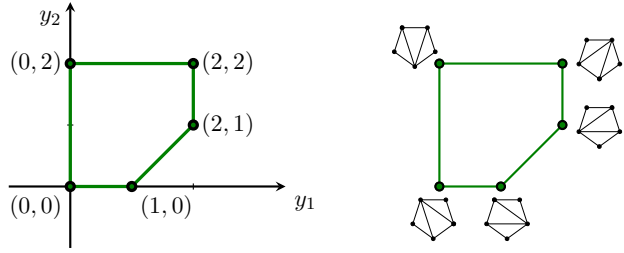
\begin{figure}[ht]
\begin{centering}
\resizebox{!}{33mm}{

\begin{tikzpicture}[scale=1];

\draw[-stealth,line width = 1pt] (0,-1) -- (0,3)  node [anchor = north east, color = black]{$y_2$};
\draw[-stealth,line width = 1pt](-1,0)--(3.5,0) node [anchor = north west, color = black]{$y_1$};
\draw[line width = 0.5pt] (2,-0.05) -- (2,0.05);
\draw[line width = 0.5pt] (-0.05,1) -- (0.05,1);

\draw[line width = 1.5pt,  color = green!50!black](0,0) -- (1,0);
\draw[line width = 1.5pt,  color = green!50!black](0,0) -- (0,2);
\draw[line width = 1.5pt,  color = green!50!black](0,2) -- (2,2);
\draw[line width = 1.5pt,  color = green!50!black](2,2) -- (2,1);
\draw[line width = 1.5pt,  color = green!50!black](2,1) -- (1,0);

\draw[fill, color = green!50!black] (0,0) circle (0.07) node [anchor = north east, color = black]{$(0,0)$};
\draw[line width = 1.5pt] (0,0) circle (0.065);
\draw[fill, color = green!50!black] (1,0) circle (0.07)node [anchor = north west, color = black]{$(1,0)$};
\draw[line width = 1.5pt] (1,0) circle (0.065);
\draw[fill, color = green!50!black] (0,2) circle (0.07)node [anchor =  east, color = black]{$(0,2)$};
\draw[line width = 1.5pt] (0,2) circle (0.065);
\draw[fill, color = green!50!black] (2,2) circle (0.07)node [anchor =  west, color = black]{$(2,2)$};
\draw[line width = 1.5pt] (2,2) circle (0.065);
\draw[fill, color = green!50!black] (2,1) circle (0.07)node [anchor =  west, color = black]{$(2,1)$};
\draw[line width = 1.5pt] (2,1) circle (0.065);

\draw[line width = 1pt,  color = green!50!black, xshift = 6cm](0,0) -- (1,0);
\draw[line width = 1pt,  color = green!50!black, xshift = 6cm](0,0) -- (0,2);
\draw[line width = 1pt,  color = green!50!black, xshift = 6cm](0,2) -- (2,2);
\draw[line width = 1pt,  color = green!50!black, xshift = 6cm](2,2) -- (2,1);
\draw[line width = 1pt,  color = green!50!black, xshift = 6cm](2,1) -- (1,0);

\draw[fill, color = green!50!black, xshift = 6cm] (0,0) circle (0.07);
\draw[line width = 1pt, xshift = 6cm] (0,0) circle (0.065);
\draw[fill, color = green!50!black, xshift = 6cm] (1,0) circle (0.07);
\draw[line width = 1pt, xshift = 6cm] (1,0) circle (0.065);
\draw[fill, color = green!50!black, xshift = 6cm] (0,2) circle (0.07);
\draw[line width = 1pt, xshift = 6cm] (0,2) circle (0.065);
\draw[fill, color = green!50!black, xshift = 6cm] (2,2) circle (0.07);
\draw[line width = 1pt, xshift = 6cm] (2,2) circle (0.065);
\draw[fill, color = green!50!black, xshift = 6cm] (2,1) circle (0.07);
\draw[line width = 1pt, xshift = 6cm] (2,1) circle (0.065);

\newdimen\L
\L=0.35cm
       
\draw[  yshift = -0.5cm,xshift = 5.9cm,rotate = -18,] (0:\L)

\foreach \x in {72,144,...,360} {
      -- (\x:\L)
 } -- cycle (90:\L)  ;

\foreach \x in {0,72,144,...,360} {
     \draw[fill, yshift = -0.5cm,xshift = 5.9cm,rotate = -18 ] (\x:\L) circle (0.03);
 } ;

\draw[,  yshift = -0.5cm,xshift = 5.9cm,rotate = -18] (0:\L) -- (144:\L);
\draw[,  yshift = -0.5cm,xshift = 5.9cm,rotate = -18] (144:\L) -- (288:\L);

\draw[,  yshift = -0.5cm,xshift = 7.1cm,rotate = 126] (0:\L)
\foreach \x in {72,144,...,360} {
      -- (\x:\L)
 } -- cycle (90:\L)  ;

\foreach \x in {0,72,144,...,360} {
     \draw[fill, yshift = -0.5cm, xshift = 7.1cm,rotate = 126 ] (\x:\L) circle (0.03);
 } ;

\draw[,  yshift = -0.5cm,xshift = 7.1cm,rotate = 198] (0:\L) -- (144:\L);
\draw[,   yshift = -0.5cm,xshift = 7.1cm,rotate = 198] (144:\L) -- (288:\L);

\draw[,  yshift = 2.3cm,xshift = 5.6cm,rotate = 126] (0:\L)
\foreach \x in {72,144,...,360} {
      -- (\x:\L)
 } -- cycle (90:\L)  ;

\foreach \x in {0,72,144,...,360} {
     \draw[fill, yshift = 2.3cm, xshift = 5.6cm,rotate = 126] (\x:\L) circle (0.03);
 } ;
 
\draw[,  yshift = 2.3cm, xshift = 5.6cm,rotate = 126] (0:\L) -- (144:\L);
\draw[,  yshift = 2.3cm, xshift = 5.6cm,rotate = 126] (144:\L) -- (288:\L);

\draw[,  yshift = 1.0cm,xshift = 8.6cm,rotate = 54] (0:\L)
\foreach \x in {72,144,...,360} {
      -- (\x:\L)
 } -- cycle (90:\L)  ;

\foreach \x in {0,72,144,...,360} {
     \draw[fill, yshift = 1.0cm, xshift = 8.6cm,rotate = 54 ] (\x:\L) circle (0.03);
 } ;

\draw[, yshift = 1.0cm, xshift = 8.6cm,rotate = 54] (0:\L) -- (144:\L);
\draw[,  yshift = 1.0cm, xshift = 8.6cm,rotate = 54] (144:\L) -- (288:\L);

\draw[,  yshift = 2.1cm,xshift = 8.6cm,rotate = -90] (0:\L)
\foreach \x in {72,144,...,360} {
      -- (\x:\L)
 } -- cycle (90:\L)  ;

\foreach \x in {0,72,144,...,360} {
     \draw[fill, yshift = 2.1cm, xshift = 8.6cm,rotate = -90 ] (\x:\L) circle (0.03);
 } ;
 
\draw[,  yshift = 2.1cm,xshift = 8.6cm,rotate = -90] (0:\L) -- (144:\L);
\draw[,   yshift = 2.1cm,xshift = 8.6cm,rotate = -90] (144:\L) -- (288:\L);

\end{tikzpicture}

}
\end{centering}
\caption{The Rote--Santos--Streinu associahedron ${\rm RSS}_2 (\mathbf {g_0})$ 
with the coordinates of the vertices. This coincides with the realization of Buchstaber.}
\end{figure}

The facet of $\RSS (\mathbf g)$
defined by $y_j-y_i\geq g_{i,j}$
corresponds to the diagonal $\{i,j+1\}$
of an $(n+3)$-gon with vertices labeled 
in counterclockwise direction
from 0 to $n+2$. 
Rote, Santos and Streinu~\cite[Sec.~5.3]{RoSaSt03} notice that
$\RSS(\mathbf g)$ has exactly $n$ pairs of parallel facets, corresponding to the pairs of diagonals 
$(\{ 0,i+1 \},\{ i,n+2 \})$ for $1\leq i \leq n$, as illustrated in 
{\normalfont Figure~\ref{Parallel_ass^III}}.
 
\subsubsection{Affine equivalence}
Rote, Santos and Streinu stated in~\cite[Sec.~5.3]{RoSaSt03} 
that $\RSS (\mathbf g)$
is not affinely equivalent to neither the associahedron
as a secondary polytope nor the Chapoton--Fomin--Zelevinsky
associahedron of Section~\ref{section_clustercomplex}. 
%
Next we prove that $\RSS (\mathbf g)$
is normally isomorphic to $\Post (\mathbf a)$ and that this isomorphism 
induces an affine isomorphism between the Loday and Buchstaber specific
realizations.

\begin{theorem}
Let $\varphi$ be the affine transformation
\[
\begin{array}{cccc}
\varphi :  &    \R^{n+1}  &  \rightarrow &   \R ^n\\
  &  (x_1, \dots , x_{n+1}) & \rightarrow  & (y_1, \dots ,y_n)
\end{array}
\]
defined by $y_k=\sum_{i=1}^k (x_i-i)$.
Then
$\varphi$ maps 
$\Post (\mathbf a)$ bijectively to
$\RSS (\mathbf g)$, 
for $\mathbf g$ given by 
$g_{i,j} - \frac{(i+j+1)(j-i)}{2}= f_{i, j+1} (\mathbf a)$. 
In particular,  
$\varphi$
maps the Loday associahedron $\Post (\mathbf 1)$ 
to the Buchstaber associahedron $\RSS (\mathbf {g_0})$.
\end{theorem}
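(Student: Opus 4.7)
My plan is to match the H-descriptions directly. Lemma~\ref{lemma:post-inequalities} gives the H-description of $\Post(\mathbf a)$ and Definition~\ref{def_RSSassociahedron} gives that of $\RSS(\mathbf g)$; I would push every RSS facet inequality through $\varphi$ and check that it becomes exactly the Postnikov inequality associated with the same diagonal of the $(n+3)$-gon, provided the two parameter lists are related as claimed.

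The engine of the proof is a single telescoping identity: with the convention $y_0 := 0$, for $0 \le j < k \le n+1$,
\[
y_k - y_j \;=\; \sum_{\ell = j+1}^{k} (x_\ell - \ell) \;=\; \sum_{\ell = j+1}^{k} x_\ell \;-\; \frac{(k-j)(k+j+1)}{2}.
\]
Hence $y_k - y_j \ge g_{j,k}$ is equivalent to $\sum_{j < \ell < k+1} x_\ell \ge g_{j,k} + \frac{(k-j)(k+j+1)}{2}$. Re-indexing by $(p,q) := (j,k+1)$ puts this in the Postnikov form $\sum_{p<\ell<q} x_\ell \ge f_{p,q}$ precisely when the parameters satisfy the relation between $\mathbf g$ and $\mathbf a$ asserted in the theorem. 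The map $(p,q) = (i, j+1)$ is a bijection between the Postnikov index pairs with $q \ge p+2$ and the RSS index pairs $(i,j)$ with $i<j$; the Postnikov pairs with $q = p+1$ give trivial ``$0 \ge 0$'' statements and can be discarded.

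The extremal pair $(p,q) = (0,n+2)$ deserves separate attention since it produces the \emph{equation} $\sum_\ell x_\ell = f_{0,n+2}$ cutting the affine hull of $\Post(\mathbf a)$; under the same telescoping this becomes the equation $y_{n+1} = g_{0,n+1}$ cutting the affine hull of $\RSS(\mathbf g)$. The linear part of $\varphi$ has $1$-dimensional kernel $\langle e_{n+1}\rangle$, which is transverse to the direction of the hyperplane $\{\sum_\ell x_\ell = f_{0,n+2}\}$ (since $e_{n+1}$ does not sum to zero). Hence $\varphi$ restricts to an affine isomorphism from that hyperplane onto $\R^n$, and combined with the facet-by-facet inequality match this gives $\varphi(\Post(\mathbf a)) = \RSS(\mathbf g)$.

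For the final claim (that Loday maps to Buchstaber) I would specialize $\mathbf a = \mathbf 1$, observe that Lemma~\ref{lemma:post-inequalities} then gives $f_{p,q} = \binom{q-p}{2}$, and compute
\[
g_{i,j} \;=\; \frac{(j-i)(j-i+1)}{2} - \frac{(j-i)(i+j+1)}{2} \;=\; \frac{(j-i)\cdot(-2i)}{2} \;=\; i(i-j),
\]
which is precisely $\mathbf{g_0}$. The only real obstacle anywhere in the proof is the bookkeeping: Postnikov's pair-indices live in $\{0,\dots,n+2\}$ whereas RSS's live in $\{0,\dots,n+1\}$, and they are matched by the half-shift $(p,q)=(i,j+1)$. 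Once this shift is tracked carefully everything reduces to the single-line telescoping identity above, with no convexity input needed.
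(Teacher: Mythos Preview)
Your approach is correct and essentially identical to the paper's: both reduce the claim to the single telescoping identity $y_j - y_i = \sum_{i<\ell\le j}(x_\ell-\ell)$, which matches each RSS facet inequality to the corresponding Postnikov one under the index shift $(p,q)=(i,j+1)$. Your write-up is considerably more thorough than the paper's three-line computation---you explicitly treat the affine-hull equation, argue bijectivity via the kernel of the linear part of $\varphi$, and carry out the Loday-to-Buchstaber specialization, none of which the paper spells out.
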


\begin{proof}
The result follows from the following computation
\[
\begin{array}{rcl}
 y_j-y_i & \geq  &    g_{i,j} \\
 (x_{i+1}+\cdots+x_j)+((i+1)+\cdots + j) & \geq  & g_{i,j}   \\
 x_{i+1}+\cdots+x_j & \geq  &  g_{i,j}-\frac{(i+j+1)(j-i)}2. \\[-11mm]{}
\end{array}
\]
\end{proof}

\subsection{The associahedron as a cluster polytope of type $A$}
\label{section_clustercomplex}

Cluster complexes are simplicial complexes associated to root systems and 
arose in the theory of cluster algebras
initiated by Fomin and Zelevinsky~\cite{FZ02,FoZe03}. 
In the initial papers by these two authors cluster complexes were realized only as complete fans, 
but these fans were shown to be polytopal in their subsequent work with Chapoton~\cite{CFZ02}.
The polytopes obtained are called \emph{generalized associahedra}
because the case of type $A_n$ yields to an associahedron.
We refer to~\cite{CFZ02},~\cite{FZ03} and~\cite{FR07} for more detailed presentations.   

\subsubsection{The cluster complex of type $A_n$}\label{sec:cluster_complex}
The \emph{root system of type} $A_n$ is the set 
$\Phi := \Phi (A_n) = \{ e_i-e_j, \ 1\leq i \neq j \leq n+1 \} \subset \R^{n+1}$. 
The \emph{simple roots} of type $A_n$ are the elements 
of the set $\Pi =\{ \alpha_i=e_i-e_{i+1}, i\in [n] \}$, 
the set of \emph{positive roots} is 
$\Phi_{>0}= \{e_i-e_j:i<j\}$, and the set of 
\emph{almost positive roots} is 
$\Phi_{\geq -1}:=\Phi_{>0}\cup -\Pi$.

In the theory of cluster algebras, a compatibility relation is introduced 
in the set of almost positive roots of a finite crystallographic root system
and the \emph{cluster complex} is defined as the simplicial complex of pairwise compatible roots~\cite{FoZe03,FZ03}.
For the root system of type $A_n$, 
there is a natural correspondence between 
the set $\Phi_{\geq-1}$ and the diagonals of the 
$(n+3)$-gon $P_{n+3}$ that sends compatible roots to non-crossing diagonals, and 
vice-versa~\cite[Prop.~3.14]{FZ03}. We take this property, which makes
the cluster complex  anti-isomorphic to the face complex of the associahedron, as a definition.

\begin{definition}[Cluster complex of type $A_n$]
Identify the negative simple roots $-\alpha_i$
with the diagonals on the \emph{snake} of $P_{n+3}$ 
illustrated in Figure~\ref{snake}. 
Each positive root is a consecutive sum
\[
\alpha_{ij}=\alpha_i+\alpha_{i+1}+\dots + \alpha_j, \hspace{1cm} 1\leq i \leq j \leq n,
\]
and thus can be identified with the unique diagonal of 
$P_{n+3}$ crossing the (consecutive) diagonals 
that correspond to $-\alpha_i, -\alpha_{i+1}, \dots, -\alpha_{j}$, and no others.
Two roots $\alpha$ and $\beta$ in 
$\Phi_{\geq -1}$ are called \textit{compatible} if 
their corresponding diagonals do not cross. 
The \emph{cluster complex} $\Delta (\Phi)$ of type $A_n$ 
is the clique complex of the compatibility relation on $\Phi_{\geq -1}$, 
i.e., the complex whose simplices correspond to 
the sets of almost positive roots that are pairwise compatible. 
Maximal simplices of $\Delta (\Phi)$ are called \emph{clusters}.
\end{definition}

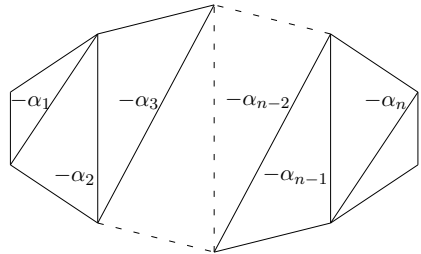
\begin{figure}[ht]
\begin{centering}
\resizebox{!}{33mm}{

\begin{tikzpicture}[ scale = 0.5];
\draw(-7,3) --(-4,1);
\draw[loosely dashed]  (-4,1)--(0,0);
\draw (0,0) -- (4,1);
\draw (4,1)--(7,3);
\draw(-7,3) -- (-7,5.5);
\draw(7,3) -- (7,5.5);
\draw(7,5.5) -- (4,7.5);
\draw(-7,5.5) -- (-4,7.5);
\draw(-4,7.5) -- (0,8.5);
\draw(4,7.5)[loosely dashed] -- (0,8.5);
\draw[loosely dashed](0,0)--(0,8.5);
\draw(0,0) -- (4,7.5);
\draw(-4,1) -- (0,8.5);
\draw(-4,1) -- (-4,7.5);
\draw(-7,3) -- (-4,7.5);
\draw(4,1) -- (4,7.5);
\draw(4,1) -- (7,5.5);

\draw (-6.3, 5.2) node {$-\alpha_1$};
\draw (-2.6, 5.2) node {$-\alpha_3$};
\draw (1.5, 5.2) node {$-\alpha_{n-2}$};
\draw (5.9, 5.2) node {$-\alpha_{n}$};

\draw (-4.8, 2.6) node {$-\alpha_{2}$};
\draw (2.8, 2.6) node {$-\alpha_{n-1}$};
\end{tikzpicture}

}
\end{centering}
\caption{Snake and negative roots of type $A_n$.}
\label{snake}
\end{figure}

\begin{theorem}[Fomin--Zelevinsky {\cite[Thms.~1.8, 1.10]{FZ03}}]\label{th_simpfan}
The simplicial cones $\R_{\geq 0} C$ generated by all clusters $C$ of type $A_n$ 
form a complete simplicial fan in the ambient space 
\[
\{(x_1,\dots ,x_{n+1})\in \R^{n+1}: x_1+\cdots +x_{n+1}=0 \}.
\]
\end{theorem}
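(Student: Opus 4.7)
The plan is to exploit the bijection set up above between almost positive roots of $A_n$ and diagonals of $P_{n+3}$, and hence between clusters and triangulations of $P_{n+3}$. Every triangulation has exactly $n$ diagonals, matching $\dim V = n$ where $V=\{x\in\R^{n+1}:\sum x_i=0\}$, so each cone $\R_{\ge 0} C$ is generated by $n$ vectors. To establish simpliciality (linear independence of each cluster), I would induct on $n$ via ear removal: any triangulation of $P_{n+3}$ contains an ear, a triangle bounded by two polygon edges and a single diagonal $\delta$. Removing the ear's apex yields a triangulation of $P_{n+2}$ whose $n-1$ diagonals form an $A_{n-1}$-cluster; by induction these are linearly independent in the corresponding root subspace. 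To conclude independence after re-introducing $\delta$, I would check that $\beta_\delta$ has nonzero coefficient along a simple root $\alpha_v$ associated with the removed apex, while the remaining $n-1$ roots involve none of this coordinate.

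Next I would prove that the collection $\{\R_{\ge 0} C\}$ forms a fan by showing that adjacent maximal cones meet exactly in a common codimension-$1$ face. Two triangulations $T, T'$ differ by a flip exactly when they share $n-1$ diagonals and differ by the two diagonals $\delta,\delta'$ of a common quadrilateral. The crucial step is to establish an \emph{exchange relation} $a\,\beta_\delta + b\,\beta_{\delta'} = \gamma$ with $a,b>0$, where $\gamma$ lies in the nonnegative span of the shared $n-1$ roots; in type $A$ this reduces to a two-dimensional check inside the quadrilateral, essentially the $A_2$ case. This relation guarantees that $\R_{\ge 0} C$ and $\R_{\ge 0} C'$ lie on opposite sides of the hyperplane spanned by the $n-1$ common roots, meeting precisely along their common facet.

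For completeness ($\bigcup_C \R_{\ge 0} C = V$), I would observe that any codimension-$1$ face of a cone in the arrangement corresponds to a set of $n-1$ pairwise non-crossing diagonals, which (being one short of a triangulation) admits exactly two completions to a triangulation. Hence every such facet is shared by exactly two maximal cones, so the union $U = \bigcup_C \R_{\ge 0} C$ has empty topological boundary inside $V$; since $U$ is closed and non-empty and $V$ is connected, $U = V$.

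The hard part is the exchange-relation step: one must pin down the linear relation between flipped roots precisely, in order to see that adjacent cones fit together without gap or overlap. Once that local picture is secured, simpliciality by induction and completeness by the boundary argument become comparatively routine.
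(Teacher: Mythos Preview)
Your exchange-relation step is the right core idea, and the paper's proof (which derives this theorem as the special case $T_0=\text{snake}$ of Theorem~\ref{thm:triangmany_fan}) also hinges on it. But your fan-and-completeness argument has a genuine gap. Knowing that adjacent cones lie on opposite sides of their common facet, and that every codimension-one face is shared by exactly two maximal cones, does give that $U=\bigcup_C \R_{\ge 0}C$ has empty boundary and hence $U=V$; but this does not rule out overlap. A priori the cones could cover $V$ with multiplicity two or more (a folded cover), in which case they do not form a fan. The paper supplies precisely the missing anchor: it checks that the interior of the negative orthant is covered by the single cone $\R_{\ge 0}T_0$ (whose generators are the basis $\{-\alpha_i\}$) and by no other cone, since every other cluster contains at least one positive root. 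This ``interior point property'' (IPP), together with the local flip condition (ICoP), is what \cite[Cor.~4.5.20]{LoRaSa10} needs to conclude that the cones tile $V$ simply.

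Two smaller points. Your ear-removal induction for simpliciality is shaky: deleting the apex $v$ of an ear of $T$ does not in general produce an $A_{n-1}$ cluster, because the snake of $P_{n+3}$ need not restrict to a snake of $P_{n+2}$ unless $v$ is itself an ear of the snake, and there is no simple root canonically attached to an arbitrary polygon vertex. The paper instead gets simpliciality for free: the seed cone is full-dimensional, and the exchange relation (with strictly positive coefficients on both flipped generators) propagates full-dimensionality along the connected flip graph. Finally, even for the snake the exchange relation is not a single $A_2$ computation; one must separate the case where one of the two flipped diagonals belongs to $T_0$ from the case where neither does (cases (a) and (b) in Lemma~\ref{lemma:triangmany_fan}), and the resulting linear relations have different shapes.
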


\begin{theorem}[Chapoton--Fomin--Zelevinsky {\cite[Thm.~1.4]{CFZ02}}]
\label{theorem_cluster}
The simplicial fan in {\normalfont Theorem~\ref{th_simpfan}} is the normal fan of a simple $n$-dimensional polytope.
\end{theorem}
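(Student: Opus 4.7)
The plan is to prove polytopality of the cluster fan by exhibiting an explicit polytope realizing it. I would seek values $c_\alpha \in \mathbb{R}$, one per almost positive root $\alpha \in \Phi_{\geq -1}$, defining
\[
P \;=\; \bigl\{x \in V : \langle x, \alpha \rangle \leq c_\alpha \text{ for all } \alpha \in \Phi_{\geq -1}\bigr\},
\]
where $V$ is the hyperplane $\{x_1 + \cdots + x_{n+1}=0\}$ of Theorem~\ref{th_simpfan}, and then show that $P$ is an $n$-dimensional simple polytope whose normal fan is the cluster fan. By the standard polytopality criterion for complete simplicial fans (Shephard--McMullen), it suffices to check a strict wall-crossing inequality at each interior wall.

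First I would analyse the walls. An interior wall of the cluster fan corresponds to a flip between two adjacent clusters $C = \{\gamma_1, \ldots, \gamma_{n-1}\}\cup\{\alpha\}$ and $C' = \{\gamma_1, \ldots, \gamma_{n-1}\}\cup\{\beta\}$. Under the bijection between $\Phi_{\geq -1}$ and diagonals of $P_{n+3}$, the roots $\alpha$ and $\beta$ are the two crossing diagonals of a quadrilateral cut from the polygon by four of the $\gamma_i$'s. The four sides of that quadrilateral give a Ptolemy-type identity which translates, in the root lattice, to a unique linear dependence $\alpha + \beta = \sum_i \lambda_i \gamma_i$ with small non-negative integer coefficients $\lambda_i$. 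The required wall-crossing inequality is then
\[
c_\alpha + c_\beta \;>\; \sum_i \lambda_i\, c_{\gamma_i}.
\]

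For the choice of right-hand sides, I would follow Chapoton--Fomin--Zelevinsky and set $c_\alpha$ to be an explicit, combinatorially transparent function of $\alpha$ viewed as a diagonal of $P_{n+3}$ -- for example, built from the compatibility degrees of $\alpha$ with the negative simple roots forming the reference snake cluster. With this choice, each $c_\alpha$ becomes a simple integer depending only on local data of the diagonal (essentially how many snake diagonals $\alpha$ crosses), so the inequalities above can be written down entirely in terms of the combinatorics of the $(n+3)$-gon.

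The hard part will be verifying the wall-crossing inequalities uniformly over all flips. The difficulty is that the coefficients $\lambda_i$ and the identification of each $\gamma_i$ as a positive root or as a negative simple root both depend on how the exchange quadrilateral sits relative to the snake, so several combinatorial configurations must be treated. I expect, however, that the type-$A$ picture is concrete enough that each configuration reduces to an elementary arithmetic inequality on the chosen $c_\alpha$'s. Once these are verified, simplicity of $P$ follows from that of the fan, and $n$-dimensionality is immediate from $\dim V = n$.
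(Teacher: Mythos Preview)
Your plan is correct and is essentially the paper's approach: the paper deduces Theorem~\ref{theorem_cluster} as the snake-triangulation case of its Theorem~\ref{thm:triangmany_regular}, whose proof is exactly the wall-crossing polytopality criterion (stated as Lemma~\ref{lemma:polytopal}) that you invoke, followed by an explicit case analysis of the flip dependences (Lemma~\ref{lemma:triangmany_fan}). The only notable differences are the choice of right-hand sides---the paper takes $\omega=2$ on seed diagonals and $\omega=1+\varepsilon g_{ij}$ elsewhere for a suitable $g$, rather than the compatibility-degree weights you propose---and that for the snake seed only the paper's cases~(a) and~(b) occur, which is precisely why your claimed form $\alpha+\beta=\sum_i\lambda_i\gamma_i$ with non-negative $\lambda_i$ holds in this special case.
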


\begin{definition}
We call \emph{Chapoton--Fomin--Zelevinsky associahedron} $\CFZ(A_n)$ any polytope whose normal fan is the fan with maximal cones $\R_{\geq 0} C$ generated by all clusters $C$ of type $A_n$.
\end{definition}

A realization $\mathrm{CFZ}_2(A_2)$ is illustrated in
Figure~\ref{fig_assA2}; note how the facet normals correspond to the almost positive roots 
of~$A_2$. It is obvious from the definition of cluster complexes that
$\CFZ(A_n)$ has exactly $n$ pairs of parallel facets. These correspond to the pairs of roots $\{ \alpha_i,-\alpha_i \}$, for $i=1,\dots , n$, or, equivalently, to the pairs of diagonals $\{ \alpha_i,-\alpha_i \}$ as indicated in {\normalfont Figure~\ref{Parallel_ass^II}}. This is a particular case of Proposition~\ref{prop:triangmany_parallel}.

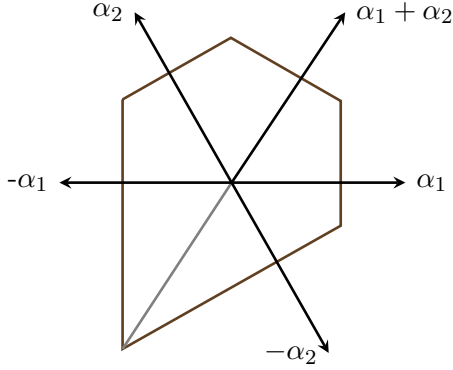
\begin{figure}[ht]
	\centering
	\begin{tikzpicture}[scale = 2,rotate =-90]
	\draw[line width = 1pt, color = brown!50!black] (2.08, 0.72) -- (3.74,0.72) -- (2.92,2.17) -- (2.09,2.17) -- (1.67,1.44) -- (2.08,0.72);

	\draw [line width = 1pt, color = gray] (3.74,0.72) -- (2.08+0.55333333,0.72+0.725);
	\draw [line width = 1pt, -stealth] (2.08+0.55333333,0.72+0.725)-- (2.08+0.55333333,2.6) node [anchor = west, scale = 1]{$\alpha_1$};
	\draw [line width = 1pt, -stealth] (2.08+0.55333333,0.72+0.725)-- (2.08+0.55333333,0.3)node [anchor = east, scale = 1]{-$\alpha_1$};
	\draw [line width = 1pt, -stealth] (2.08+0.55333333,0.72+0.725) -- (1.5,2.2)node [anchor = west, scale = 1]{$\alpha_1+\alpha_2$};

	\draw [line width = 1pt,-stealth] (2.08+0.55333333,0.72+0.725) -- (1.5,0.8)node [anchor = east, scale = 1]{$\alpha_2$};
	\draw [line width = 1pt,-stealth] (2.08+0.55333333,0.72+0.725) -- (3.76666666666, 2.09)node [anchor = east, scale = 1]{$-\alpha_2$};
	\end{tikzpicture}
	\caption{The complete simplicial fan of the cluster complex of type $A_2$ and an associahedron $\mathrm{CFZ}_2(A_2)$.}
	\label{fig_assA2}
\end{figure}

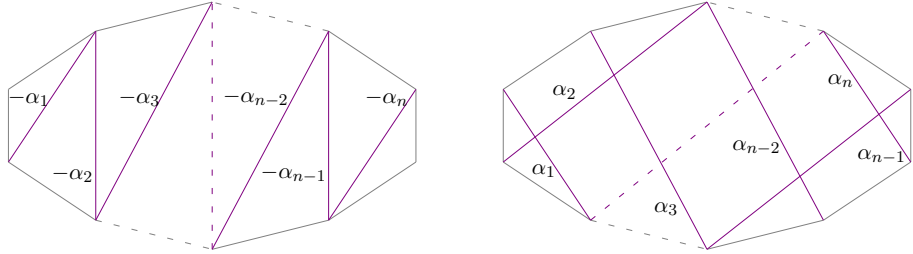
\begin{figure}[ht]
\begin{centering}
\resizebox{!}{33mm}{

\begin{tikzpicture}[scale = 0.5];
\draw[color = gray, xshift = -4cm ](-7,3) --(-4,1);
\draw[loosely dashed, color = gray, xshift = -4cm ]  (-4,1)--(0,0);
\draw [color = gray, xshift = -4cm ](0,0) -- (4,1);
\draw [color = gray, xshift = -4cm ](4,1)--(7,3);
\draw[color = gray, xshift = -4cm ](-7,3) -- (-7,5.5);
\draw[color = gray, xshift = -4cm ](7,3) -- (7,5.5);
\draw[color = gray, xshift = -4cm ](7,5.5) -- (4,7.5);
\draw[color = gray, xshift = -4cm ](-7,5.5) -- (-4,7.5);
\draw[color = gray, xshift = -4cm ](-4,7.5) -- (0,8.5);
\draw[loosely dashed,color = gray, xshift = -4cm ](4,7.5) -- (0,8.5);
\draw[loosely dashed, color = violet, xshift = -4cm ](0,0)--(0,8.5);
\draw[color = violet, xshift = -4cm ](0,0) -- (4,7.5);
\draw[color = violet, xshift = -4cm ](-4,1) -- (0,8.5);
\draw[color = violet, xshift = -4cm ](-4,1) -- (-4,7.5);
\draw[color = violet, xshift = -4cm ](-7,3) -- (-4,7.5);
\draw[color = violet, xshift = -4cm ](4,1) -- (4,7.5);
\draw[color = violet, xshift = -4cm ](4,1) -- (7,5.5);

\draw[xshift = -4cm] (-6.3, 5.2) node {$-\alpha_1$};
\draw[xshift = -4cm] (-2.5, 5.2) node {$-\alpha_3$};
\draw[xshift = -4cm] (1.5, 5.2) node {$-\alpha_{n-2}$};
\draw[xshift = -4cm] (6, 5.2) node {$-\alpha_{n}$};

\draw[xshift = -4cm] (-4.8, 2.6) node {$-\alpha_{2}$};
\draw[xshift = -4cm] (2.8, 2.6) node {$-\alpha_{n-1}$};

\draw[color = gray, xshift = 13cm](-7,3) --(-4,1);
\draw[loosely dashed, color = gray, xshift = 13cm]  (-4,1)--(0,0);
\draw [color = gray, xshift = 13cm](0,0) -- (4,1);
\draw [color = gray, xshift = 13cm](4,1)--(7,3);
\draw[color = gray, xshift = 13cm](-7,3) -- (-7,5.5);
\draw[color = gray, xshift = 13cm](7,3) -- (7,5.5);
\draw[color = gray, xshift = 13cm](7,5.5) -- (4,7.5);
\draw[color = gray, xshift = 13cm](-7,5.5) -- (-4,7.5);
\draw[color = gray, xshift = 13cm](-4,7.5) -- (0,8.5);
\draw[loosely dashed,color = gray, xshift = 13cm](4,7.5) -- (0,8.5);

\draw[loosely dashed, color = violet, xshift = 13cm](-4,1)--(4,7.5);
\draw[color = violet, xshift = 13cm](-4,1) -- (-7,5.5);
\draw[color = violet, xshift = 13cm](-7,3) -- (0,8.5);
\draw[color = violet, xshift = 13cm](0,0) -- (-4,7.5);
\draw[color = violet, xshift = 13cm](0,0) -- (7,5.5);
\draw[color = violet, xshift = 13cm](4,1) -- (0,8.5);
\draw[color = violet, xshift = 13cm](7,3) -- (4,7.5);

\draw[, xshift = 13cm] (-4.9, 5.4) node {$\alpha_2$};
\draw[, xshift = 13cm] (-1.4, 1.4) node {$\alpha_3$};
\draw[, xshift = 13cm] (6.0, 3.2) node {$\alpha_{n-1}$};
\draw[, xshift = 13cm] (4.6, 5.8) node {$\alpha_{n}$};

\draw[, xshift = 13cm] (-5.6, 2.7) node {$\alpha_{1}$};
\draw[, xshift = 13cm] (1.7,3.5) node {$\alpha_{n-2}$};
\end{tikzpicture}

}
\end{centering}
\caption{The diagonals of the $(n+3)$-gon that correspond to the pairs of parallel facets of $\CFZ(A_n)$.}
\label{Parallel_ass^II}
\end{figure}

Theorem~\ref{th_simpfan} is the case of type $A_n$ of~\cite[Thm.~1.10]{FZ03}. 
Theorem~\ref{theorem_cluster} was conjectured by Fomin and Zelevinsky
\cite[Conj.~1.12]{FZ03} and subsequently proved by Chapoton, Fomin, and Zelevinsky~\cite{CFZ02}. 
For an explicit description by inequalities see~\cite[Cor.~1.9]{CFZ02}. 
These two theorems (for type $A$) are special cases of our Theorems~\ref{thm:triangmany_fan} and~\ref{thm:triangmany_regular}, proved in Section~\ref{sec:triang-many}.

\section{Exponentially many realizations, by Hohlweg--Lange} 
\label{sec:exponentially-many}

\subsection{The Hohlweg--Lange construction}

In this section we give a short description 
of the first, which we call ``type I'', exponential family of 
realizations of the associahedron,
obtained by Hohlweg and Lange in \cite{HoLa07}.
These associahedra are also generalized permutahedra and 
their construction uses ideas from
Shnider--Stemberg and Loday's constructions~\cite{ShniderSternberg93,Lo04}.
It was shown in \cite{BergeronHLT} that the number of normally non-isometric
realizations obtained this way is equal to the number of
sequences $\{+,-\}^{n-1}$ modulo reflection and reversal, which 
equals $2^{n-3}+2^{\lfloor \frac{n-3}{2} \rfloor}$
for $n\geq 3$ (see \cite[Sequence A005418]{Sloane}).
We show that classification by normal isomorphism yields the same number.
 
Let $\sigma \in \{+,-\}^{n-1}$ be a sequence of signs 
on the edges of an horizontal path on $n$ nodes.
We identify $n+3$ vertices $\{0,1,\dots , n+1,n+2\}$
with the signs of the sequence $\widetilde \sigma =\{+,-,\sigma,-,+\}$, and 
place them in convex position from left to right  
so that all positive vertices are above the horizontal path,
and all negative vertices are below it. 
These vertices form a convex $(n+3)$-gon 
that we call $P_{n+3}(\sigma)$. 
Figure \ref{fig_HLorientation}
illustrates the example $P_7(\{ +,-,+ \})$, where $n=4$.

\begin{figure}[ht]
\begin{centering}
\resizebox{!}{36mm}{
\begin{tikzpicture};

\filldraw (0,0) circle (0.15) node [anchor = north, scale = 1.5, yshift = -1mm]{$3$};
\filldraw (4,1.5) circle (0.15)node [anchor = north, scale = 1.5, yshift = -1mm]{$5$};
\filldraw (4.5,3.5) circle (0.15)node [anchor = south, scale = 1.5, yshift = 1mm]{$6$};
\filldraw (2,4.5) circle (0.15)node [anchor = south, scale = 1.5, yshift = 1mm]{$4$};
\filldraw (-4,1.5) circle (0.15)node [anchor = north, scale = 1.5, yshift = -1mm]{$1$};
\filldraw (-4.5,3.5) circle (0.15) node [anchor = south, scale = 1.5, yshift = 1mm]{$0$};
\filldraw (-2,4.5) circle (0.15)node [anchor = south, scale = 1.5, yshift = 1mm]{$2$};
\draw[line width = 1pt] (-2,4.5)--(-4.5,3.5)--(-4,1.5)-- (0,0) -- (4,1.5)--(4.5,3.5)--(2,4.5) -- (-2,4.5);

\draw[line width = 1pt] (-3,2.5)-- (-1,2.5) -- (1,2.5)--(3,2.5);
\filldraw (-3,2.5) circle (0.15);
\filldraw (3,2.5) circle (0.15);
\filldraw (1,2.5) circle (0.15);
\filldraw (-1,2.5) circle (0.15);

\draw (-2,2.1) node[scale = 1.5] {$+$};
\draw (2,2.1) node[scale = 1.5] {$+$};
\draw (0,2.1) node[scale = 1.5] {$-$};
\end{tikzpicture}

}
\end{centering}
\caption{$P_7(\{+,-,+\})$.}
\label{fig_HLorientation}
\end{figure}

The Hohlweg--Lange associahedra are obtained by removing certain facets of the standard $n$-dimensional permutahedron (\ref{eq:permutahedron}).
The facets that are removed depend on the choice of~$\sigma$, as follows.  

\begin{definition}\label{def:HLass}
	For a diagonal $ij$ ($i<j$) of $P_{n+3}(\sigma)$, we denote by $R_{ij}(\sigma)$ the 
	set of vertices strictly below it. We define the set $S_{ij}(\sigma)$ as the 
	result of 
	replacing $0$   by $i$ in $R_{ij}(\sigma)$ if $0\in R_{ij}(\sigma)$, and 
	replacing $n+2$ by $j$ if $n+2\in R_{ij}(\sigma)$.
The \emph{Hohlweg--Lange associahedron} $\AssI (\sigma)$ is the polytope
\[
\begin{array}{rl}
	\AssI (\sigma) = 
\big\{  (x_1,\dots x_{n+1}) \in \R^{n+1} : 
& \sum\limits_{i\in S_\delta (\sigma)} x_i \geq \tfrac12{|S_\delta(\sigma)|(|S_\delta(\sigma)| +1)}   \text{ for all diagonals } \delta,
\\ 
& x_1+\cdots+x_{n+1}=\tfrac{(n+1)(n+2)}2
\big\}.
\end{array}
\]
\end{definition} 

\begin{remark}\label{remark:change-signs}
If we interchange the first two signs and/or the last two signs
in $\widetilde \sigma =\{+,-,\sigma,-,+\}$   
the sets $S_\delta(\sigma)$ do not change and the 
construction produces the same associahedron $\AssI (\sigma)$.
\end{remark}

\begin{proposition}[Hohlweg--Lange {\cite[Prop.~1.3]{HoLa07}}] 
$\AssI(\sigma)$ is an $n$-dimensional associahedron. 
\end{proposition}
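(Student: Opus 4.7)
My approach is to exhibit, for each triangulation $T$ of the labeled polygon $P_{n+3}(\sigma)$, an explicit point $v_\sigma(T) \in \R^{n+1}$, and then show that these $C_{n+1}$ points are precisely the vertices of $\AssI(\sigma)$, with face incidences matching the associahedron. First I would observe that every inequality in Definition~\ref{def:HLass} appears among the facet inequalities of the standard permutahedron (\ref{eq:permutahedron}) with the same right-hand side, so $\AssI(\sigma)$ contains that permutahedron. This immediately gives $\dim \AssI(\sigma) = n$ and identifies its affine hull as $\{\sum x_i = \binom{n+2}{2}\}$.

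Generalizing Loday's classical tree formula (illustrated in Figure~\ref{fig:loday}), I would define $v_\sigma(T)_k$ as the product of a ``left weight'' and a ``right weight'' at the internal node labeled $k$ in the rooted binary tree that encodes $T$ with respect to the up/down zigzag determined by $\sigma$. The central combinatorial identity to establish is
\[
\sum_{i \in S_\delta(\sigma)} v_\sigma(T)_i \;=\; \tfrac{1}{2}\,|S_\delta(\sigma)|\bigl(|S_\delta(\sigma)|+1\bigr) \qquad \text{for every diagonal } \delta \in T,
\]
with strict inequality whenever $\delta \notin T$. I would prove this by induction on $n$: any diagonal $\delta \in T$ splits $P_{n+3}(\sigma)$ into two sub-polygons $P'$ and $P''$, each inheriting a sign sequence from $\sigma$ and a sub-triangulation of $T$; the product formula for $v_\sigma(T)_k$ decouples multiplicatively along the split, so the identity for $P_{n+3}(\sigma)$ assembles from the inductive identities for $P'$ and $P''$ together with a direct arithmetic check on $\delta$ itself. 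Since different triangulations share fewer than $n$ diagonals, the $C_{n+1}$ points $v_\sigma(T)$ are pairwise distinct; each lies on exactly $n$ facet hyperplanes of $\AssI(\sigma)$, and by simplicity they exhaust the vertex set, matching the Catalan vertex count of an associahedron. Boundedness is then automatic since $\AssI(\sigma) = \conv\{v_\sigma(T) : T \text{ triangulation}\}$, and the face lattice follows from the combinatorics of non-crossing sets of diagonals.

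The main obstacle will be the careful bookkeeping in the induction for arbitrary $\sigma$. In Loday's uniform-sign case, both sub-polygons inherit the same ``all plus'' structure and the induction is transparent; for general $\sigma$ one must track how the up/down roles of the endpoints of the cut diagonal translate into the sign sequences induced on $P'$ and $P''$, and confirm that the subsets $S_\delta(\sigma)$ associated to diagonals lying inside a sub-polygon restrict correctly to the analogous subsets computed intrinsically within that sub-polygon. Once the tree-weight product is defined consistently with this induced sign structure, the identity above becomes a case-sensitive but finite arithmetic verification that the inductive framework organizes cleanly.
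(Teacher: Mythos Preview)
The paper does not give its own proof of this proposition: it is stated as a citation to Hohlweg--Lange~\cite[Prop.~1.3]{HoLa07}, and immediately afterwards the paper quotes (without proof) the Hohlweg--Lange vertex formula extending Loday's rule. Your proposal is essentially a sketch of the original Hohlweg--Lange argument that the paper is citing, so in that sense your approach and the paper's ``proof'' (namely, the reference) agree.

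One logical wrinkle in your sketch: the sentence ``by simplicity they exhaust the vertex set'' is circular, since simplicity of $\AssI(\sigma)$ is part of what you are trying to establish. You need instead to argue directly that the $n$ facet normals $\{e_{S_\delta(\sigma)} : \delta \in T\}$ are linearly independent for each triangulation $T$ (this is where the careful bookkeeping on the sets $S_\delta(\sigma)$ really enters), so that each $v_\sigma(T)$ is a genuine vertex; and then that every vertex of $\AssI(\sigma)$ arises this way, e.g.\ by checking that any generic linear functional is maximized at one of your points. Once that is done, the convex-hull description and the face lattice follow as you indicate.
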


Hohlweg and Lange (\cite[Thm.~1.1]{HoLa07})  describe also the vertices of $\AssI(\sigma)$, extending Loday's rule (\cite[Thm.~1.1]{Lo04},~\cite[Cor.~8.2]{Po05}): 
To compute the $i$-th component of the vertex $(x_1,\dots, x_{n+1})$ corresponding to a triangulation $T$, look at the unique triangle $\tau_i$ of $T$ incident to vertex $i$ and whose interior intersects the vertical line through vertex $i$. The $n$ vertices in $T\setminus \tau$ fall into three  components: those to the left of $\tau$, those to the right, and those above $\tau$ (if $i$ is a negative vertex) or below $\tau$ (if $i$ is a positive vertex). Let $l_i$ and $r_i$ be the numbers of vertices to the left and right of $\tau$. Set:
\[
x_i =
 	\begin{cases}
 		(l_i+1)(r_i+1) & \text{if } \widetilde {\sigma}(i) = +  \\
 		n -  		(l_i+1)(r_i+1) & \text{if } \widetilde {\sigma}(i) = -.
	\end{cases}
\]
The reader can verify this rule for the Loday associahedron of Figure~\ref{fig:loday}.

\begin{proposition}[Hohlweg--Lange {\cite[Remarks~1.2,~4.3]{HoLa07}}]
\label{prop:special-cases-typeI}
$\AssI(\{-,-,\dots,-\})$ produces the Loday associahedron 
$\Post (\mathbf 1)$, and $\AssI(\{+,-,+,-,\dots\})$ is 
normally isomorphic to
the Chapoton--Fomin--Zelevinsky associahedron 
$\CFZ (A_n)$.
\end{proposition}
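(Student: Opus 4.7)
The plan is to unfold Definition 4.1 in each of the two cases and compare the resulting inequality system (respectively normal fan) with those already established for the Loday realization in Lemma 3.5 and for the Chapoton--Fomin--Zelevinsky realization in Section 3.3.

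For $\sigma = (-, \ldots, -)$, one has $\widetilde{\sigma} = (+, -, -, \ldots, -, +)$, so in $P_{n+3}(\sigma)$ only the vertices $0$ and $n+2$ lie above the horizontal path while $1, 2, \ldots, n+1$ lie in order below it. A direct inspection of this convex configuration shows that for every diagonal $\delta = \{i, j\}$ the two ``top'' vertices $0$ and $n+2$ lie strictly above the chord, so $R_{ij}(\sigma) = \{k : i < k < j\}$, the relabeling step in Definition 4.1 is vacuous, and $S_{ij}(\sigma) = \{i+1, \ldots, j-1\}$. The facet inequality of $\AssI(\sigma)$ becomes $\sum_{i < k < j} x_k \geq \binom{j-i}{2}$, which is exactly the Postnikov inequality from Lemma 3.5 evaluated at $\mathbf{a} = \mathbf{1}$, since $f_{i,j}(\mathbf{1}) = \binom{j-i}{2}$; the ambient equality also matches. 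Hence $\AssI(\{-, \ldots, -\}) = \Post(\mathbf{1})$ as subsets of $\R^{n+1}$.

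For $\sigma = (+, -, +, -, \ldots)$, after possibly swapping the framing signs as permitted by Remark 4.2, the polygon $P_{n+3}(\sigma)$ is a relabeling of the snake polygon of Figure~\ref{snake}, so the identification of Section~\ref{sec:cluster_complex} gives a bijection between its diagonals and the almost positive roots $\Phi_{\geq -1}$ of type $A_n$. The $n$ snake diagonals are then precisely those $\delta$ for which $S_\delta(\sigma)$ is a singleton; writing $\kappa(-\alpha_i)$ for its unique element, let $\phi$ be the linear map sending $-\alpha_i \mapsto e_{\kappa(-\alpha_i)}$ for each $i$. To conclude that $\phi$ maps the cluster fan of $A_n$ to the normal fan of $\AssI(\sigma)$, I will verify that for every positive root $\alpha_{ij} = \alpha_i + \cdots + \alpha_j$ the corresponding diagonal $\delta$ (the one crossing exactly the snake edges labeled $-\alpha_i, \ldots, -\alpha_j$) satisfies $S_\delta(\sigma) = \{\kappa(-\alpha_i), \ldots, \kappa(-\alpha_j)\}$, so that $\phi(\alpha_{ij}) = \mathbf{1}_{S_\delta(\sigma)}$. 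Since both fans are complete, simplicial, and share the same combinatorial structure (that of the associahedron) and $\phi$ matches their rays, $\phi$ is a linear isomorphism of fans, so the two associahedra are normally isomorphic.

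The routine obligation is the case analysis for the set $S_\delta(\sigma)$ in the alternating polygon. The main obstacle is the bookkeeping this requires: whether a given vertex lies above or below a chord depends on the parity of its position and on the presence of the ``corner'' vertices $0$ or $n+2$ (which trigger the relabeling rule in Definition 4.1). Once one fixes an embedding compatible with Figure~\ref{snake} and carefully tracks which snake diagonals a given chord crosses, the identification $S_\delta(\sigma) = \{\kappa(-\alpha_i), \ldots, \kappa(-\alpha_j)\}$ reduces to the defining property of the bijection between diagonals of $P_{n+3}(\sigma)$ and almost positive roots of $A_n$.
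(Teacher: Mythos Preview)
The paper does not give a proof of this proposition; it simply attributes the statement to Hohlweg and Lange \cite[Remarks~1.2,~4.3]{HoLa07} and moves on. So there is no in-paper argument against which to compare yours.

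As a standalone proof, your first paragraph for the Loday case is correct and essentially complete. (The sentence ``the two top vertices $0$ and $n{+}2$ lie strictly above the chord'' is not literally true when one of them is an endpoint of $\delta$, but then that vertex is on the chord and the relabeling rule of Definition~\ref{def:HLass} is not triggered anyway, so your conclusion $S_{ij}=\{i{+}1,\dots,j{-}1\}$ survives.) Your second paragraph for the Chapoton--Fomin--Zelevinsky case is only an outline: you set up the linear map $\phi$ and state what must be verified, but you do not carry out the case analysis, and you yourself flag the bookkeeping as ``the main obstacle.'' One detail that does need care there is a sign/complement issue: with $\phi(-\alpha_k)=e_{\kappa(-\alpha_k)}$ you get $\phi(\alpha_{ij})=-\sum_{k=i}^{j} e_{\kappa(-\alpha_k)}$, which modulo $e_{[n+1]}$ equals $e_{[n+1]\setminus\{\kappa(-\alpha_i),\dots,\kappa(-\alpha_j)\}}$ rather than $e_{\{\kappa(-\alpha_i),\dots,\kappa(-\alpha_j)\}}$; whether $S_\delta$ or its complement is the correct target depends on the exterior-normal convention, and this has to be pinned down before the argument is complete.
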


\begin{remark} 
\label{rem:minkowski-sum}
We defined the Loday associahedron as a Minkowski sum of certain faces of the standard simplex $\Delta_{[n+1]}$. The question arises whether such Minkowski sum descriptions exist for $\AssI (\sigma)$ in general. A partial answer is as follows:
The associahedra $\AssI (\sigma)$ are examples of generalized permutahedra (recall that a \emph{generalized permutahedron} is a polytope with facet normals contained in those of the standard permutahedron
such that the collection of right hand side parameters of the defining inequalities belongs to the deformation cone of the standard permutahedron; compare with the appendix by Postnikov et al. in~\cite{postnikov_faces_2008}). Generalized permutahedra include all the Minkowski sums $\sum_{S\subseteq [n+1]} a_S \Delta_S$ for which the coefficients $a_S$ are non-negative. Ardila et al.~\cite{ABD10} have shown that every generalized permutahedron admits a (unique) expression as a \emph{Minkowski sum and difference} of faces of the standard simplex.
These decompositions, for the case of  $\AssI (\sigma)$, are studied by Lange in~\cite{Lange11}.
A different decomposition arises from the work of Pilaud and Santos~\cite{PiSa11}, who show that the associahedra $\AssI (\sigma)$ are the ``brick polytopes" of
certain sorting networks. As such, they admit a decomposition as the Minkowski sum of the $\binom{n}{2}$ polytopes of the individual ``bricks". However, these summands need not be simplices.
\end{remark}

\begin{remark}\label{rem:gen-associahedra}
Hohlweg--Lange--Thomas \cite{HoLaTh11} provide a generalization of the Hohlweg--Lange construction to all finite Coxeter groups; for each Coxeter element $c$ (equivalently, for each orientation of the Coxeter graph) in a finite Coxeter system, they construct a realization of the corresponding generalized assiciahedron having as normal fan the $c$-Cambrian fan introduced earlier by Reading~\cite{Reading_cambrian} and discussed by Reading and Speyer  \cite{ReadingSpeyer09}. They call this polytope the $c$-generalized associahedron. For types $A$ and $B$, this specializes to the Hohlweg--Lange associahedra and cyclohedra. 

 A common generalization of $c$-generalized associahedra and brick polytopes (see previous remark) 
 is introduced by Pilaud--Stump~\cite{PilaudStump}. Another interesting construction of the Hohlweg--Lange--Thomas $c$-generalized associahedra is obtained by Stella in~\cite{Stella}.
\end{remark}

\subsection{Normal facet vectors, and normal isomorphism}
\label{subsec:AssI_parallel}
\label{subsec:AssI_0-1}
The Hohlweg--Lange associahedra satisfy properties (1) and (2) mentioned in the introduction: they have $n$ pairs of parallel facets and in the basis given by the normals to those facets all normal facet vectors are in $\{-1,0,1\}^n$. To see this,
we denote $e_S$ the characteristic vector of each subset $S\subset[n+1]$. By definition, the normal vectors of $\AssI(\sigma)$ are the characteristic vectors of the sets $S_{ij}(\sigma)$ associated to the different diagonals of $P_{n+3}(\sigma)$. 
But, as mentioned in the proof of Proposition \ref{prop:special-cases-typeI}, these vectors are to be considered modulo $e_{[n+1]}=(1,\dots,1)$. In particular, we have $e_S+ e_{\overline S}=0$ if $\overline S = [n+1]\setminus S$.

Although the following result is implicit in~\cite{HoLa07}, we include a proof because our description of the diagonals corresponding to parallel facets is a bit more explicit and will be used in the proofs of Theorems~\ref{thm:classification_HL} and~\ref{theo:almost-disjoint}.

\begin{proposition}[Hohlweg--Lange {\cite[Lem.~2.2 and Cor.~2.3]{HoLa07}}]
\label{prop:parallel_typeI}
$\AssI (\sigma)$ has exactly $n$ pairs of parallel facets, whose normal vectors are $e_{[j]}$ and $e_{\overline{[j]}}$ for $j=1,\dots,n$.
They correspond to the diagonals of the quadrilaterals 
with vertices $\{i,j,j+1,k\}$ for $j=1,\dots ,n$, where
\[
i= \max \{ 0\leq r < j : \rm{ sign}(r) \cdot \rm{sign}(j)=-  \} 
\]
\[
k=\min \{ j+1<r \leq n+2 : \rm{sign} (r) \cdot \rm{sign}(j+1) = - \}
\]
\end{proposition}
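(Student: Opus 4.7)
My plan is to prove the proposition in two halves: first exhibit the $n$ claimed parallel pairs by a direct computation, then argue that these are all. The whole argument rests on a combinatorial description of $R_{\{a,b\}}(\sigma)$ that is implicit in \cite{HoLa07}: going around $P_{n+3}(\sigma)$ counter-clockwise one visits $0$, then the negative vertices in increasing index, then $n+2$, then the non-extremal positive vertices in decreasing index. For a diagonal $\{a,b\}$ with $a<b$, convexity forces one of the two CCW arcs to lie on each side of the diagonal, and the HL convention of ``below'' picks out the open CCW arc from $a$ to $b$; hence $R_{\{a,b\}}(\sigma)$ equals this arc, with an explicit description in terms of $\sigma$.

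For each $j\in[n]$ I would identify the CCW order of $\{i,j,j+1,k\}$ as a function of the signs of $j$ and $j+1$. When $\widetilde{\sigma}(j)=\widetilde{\sigma}(j+1)$ (say both $+$, so $i,k$ are both negative), the restricted order is $i,k,j+1,j$ and the two crossing diagonals are $\{i,j+1\}$ and $\{j,k\}$; when the signs differ, the restricted order is $j,k,j+1,i$ (or its symmetric variant) and the crossing diagonals are $\{j,j+1\}$ and $\{i,k\}$. Applying the arc description to each diagonal and then the replacement rules $0\mapsto a$, $n+2\mapsto b$, the computation uses the structural facts that every index in $(i,j)$ has the same sign as $j$, every index in $(j+1,k)$ has the same sign as $j+1$, and the non-extremal positive and negative indices partition $[1,n+1]$. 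These make the arcs split into at most three consecutive integer runs which, after the replacement, telescope exactly to $[j]$ for one diagonal and $\overline{[j]}$ for the other. Since $e_{[j]}+e_{\overline{[j]}}=e_{[n+1]}$ lies in the lineality space of the normal fan, the two facets are parallel.

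To see that no further parallel pairs exist I would invoke the fact (stated in the introduction and proved in \cite[Sec.~2]{BergeronHLT}) that $\AssI(\sigma)$ is linearly isomorphic to a polytope obtained from the standard $n$-cube by cutting $\binom{n}{2}$ faces; the $n$ pairs of parallel facets of the cube survive the cuts while the cuts themselves produce only non-parallel facets, so $\AssI(\sigma)$ has exactly $n$ pairs of parallel facets, which must be those already exhibited. The main obstacle I anticipate is the case-by-case bookkeeping for the quadrilateral computation: there are four sign patterns of $(\widetilde{\sigma}(j),\widetilde{\sigma}(j+1))$, but two are related to the other two by a global flip of the polygon, so one full computation plus a symmetry remark should suffice. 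Making the telescoping step both compact and rigorous — and checking that the replacement rules $0\mapsto a$ and $n+2\mapsto b$ apply in precisely the right cases — is the most delicate piece of the writeup.
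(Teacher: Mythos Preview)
Your existence computation is sound in outline and will work, though it is more laborious than necessary. The real problem is the uniqueness half.

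The claim that $\AssI(\sigma)$ is obtained from an $n$-cube by cutting $\binom{n}{2}$ faces is, in the logic of this paper, a \emph{consequence} of Proposition~\ref{prop:parallel_typeI} together with Corollary~\ref{coro:AssI_0-1}, not an independent input: property~(1) in the introduction is precisely ``has exactly $n$ pairs of parallel facets,'' which is what you are proving. Citing \cite{BergeronHLT} does not help, since that paper builds on \cite{HoLa07} and does not re-derive the facet structure from scratch. More to the point, even if one granted an independent cube-cutting description, your assertion that ``the cuts themselves produce only non-parallel facets'' is exactly the content of the uniqueness claim and cannot be assumed. Cutting faces of a cube can certainly create new pairs of parallel facets in general; ruling this out requires inspecting the actual normals of the new facets, which is the direct argument you are trying to avoid.

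The paper's proof is direct and handles existence and uniqueness simultaneously. The normal to the facet indexed by $\delta$ is the characteristic vector $e_{S_\delta(\sigma)}$, considered modulo the lineality direction $e_{[n+1]}$. Two facets are therefore parallel if and only if $S_\delta$ and $S_{\delta'}$ are complementary in $[n+1]$ (equality of the sets is excluded since $\delta\mapsto S_\delta$ is injective). One then reads off from the definition of $S_\delta$ that complementarity forces $\delta$ and $\delta'$ to be the two crossing diagonals of a quadrilateral having one boundary edge on each of the two monotone chains of $P_{n+3}(\sigma)$---exactly the quadrilaterals $\{i,j,j+1,k\}$ of the statement---and that the resulting sets are $[j]$ and $\overline{[j]}$. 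I would replace your second half with this argument; your case-by-case quadrilateral bookkeeping for existence is then subsumed and unnecessary.
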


Hohlweg and Lange~\cite[Cor.~2.3]{HoLa07} also remark that the facets with normals $e_{[j]}$, $j\in [n+1]$ intersect at a vertex of 
$\AssI (\sigma)$ and the ones with normals $e_{\overline {[j]}}$, $j\in [n+1]$ intersect at another (opposite) one. That is to say, the $2n$ corresponding diagonals of $P_{n+3}(\sigma)$ form two triangulations, as was the case in Figure~\ref{Parallel_ass^III}.

\begin{proof}
Two diagonals $\delta$ and $\delta'$ correspond to
two parallel facets of $\AssI(\sigma)$ if and only if
the sets $S_\delta$ and $S_{\delta'}$ are complementary. By the definition of $S_{\delta}$, the only way this can happen is when $\delta$ and $\delta'$ are two crossing diagonals of opposite slope signs and such that the quadrilateral containing them uses an edge from the lower chain of $P_{n+3}(\sigma)$ and an edge of the upper chain. This is the case described in the statement, and it is easy to check that the corresponding $S_{\delta}$ and $S_{\delta'}$ are, respectively, $\{1,\dots,j\}$ and $\{j+1,\dots,n+1\}$.
\end{proof}

That all other normals have coordinates in $\{-1,0,+1\}^n$ when expressed in the basis $\{e_{[j]}, j\in [n]\}$ follows trivially from the following equation, valid for every $S\subset [n+1]$:
\[
e_S = \sum_{j\in S\atop j+1\not\in S} e_{[j]} - \sum_{j+1\in S\atop j\not\in S} e_{[j]}.
\]
\begin{corollary}
\label{coro:AssI_0-1}
With respect to the basis $\{e_{[1]},\dots, e_{[n]}\}$, (and considered modulo $e_{[n+1]}$), the normal vectors of $\AssI(\sigma)$ are all in $\{0,+1,-1\}^n$ and include $\{\pm e_{[1]},\dots, \pm e_{[n]}\}$.
\end{corollary}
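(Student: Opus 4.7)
The plan is to observe that the corollary is essentially a direct consequence of the displayed identity immediately preceding it, combined with Proposition~\ref{prop:parallel_typeI}. So the proof is short; most of the work is bookkeeping to make sure the coefficients really land in $\{0,\pm 1\}$ after reducing modulo $e_{[n+1]}$.

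First I would verify the displayed identity
\[
e_S \ =\ \sum_{j\in S,\ j+1\notin S} e_{[j]} \ -\ \sum_{j+1\in S,\ j\notin S} e_{[j]},
\]
valid for $S\subseteq [n+1]$ with the convention that for $j=n+1$ the condition ``$j+1\notin S$'' is automatic. A clean way is to compute the $k$-th coordinate of the right-hand side: it telescopes as a sum over $j\ge k$ of $\pm 1$'s keyed by whether $j$ and $j+1$ are in $S$, collapsing to the indicator of $k\in S$. Alternatively one can verify it on the basis given by characteristic vectors of intervals $[a,b]$, where only two terms survive.

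Next I would read off that the coefficient of each $e_{[j]}$ in this expansion is either $0$, $+1$, or $-1$, since for each fixed $j$ at most one of the two membership conditions on $\{j,j+1\}$ is satisfied. Thus when the normals $e_{S_\delta(\sigma)}$ of $\AssI(\sigma)$ are expressed in the basis $\{e_{[1]},\ldots,e_{[n+1]}\}$, all coordinates lie in $\{-1,0,+1\}$. Reducing modulo $e_{[n+1]}$ simply kills the last coefficient, leaving a vector in $\{-1,0,+1\}^n$ with respect to the basis $\{e_{[1]},\ldots,e_{[n]}\}$, which proves the first assertion.

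For the second assertion, Proposition~\ref{prop:parallel_typeI} has already identified the $n$ pairs of parallel facets of $\AssI(\sigma)$: for each $j=1,\ldots,n$ there are diagonals of $P_{n+3}(\sigma)$ whose sets $S_\delta(\sigma)$ are $[j]$ and $\overline{[j]}=[n+1]\setminus [j]$. The corresponding normals are $e_{[j]}$ and $e_{\overline{[j]}} = e_{[n+1]} - e_{[j]} \equiv -e_{[j]} \pmod{e_{[n+1]}}$, so both $+e_{[j]}$ and $-e_{[j]}$ occur among the facet normals. There is no genuine obstacle here: the only mild subtlety is keeping track of the reduction modulo $e_{[n+1]}$, and checking that the boundary case $j=n+1$ in the displayed identity is handled correctly by the convention that $n+2\notin S$.
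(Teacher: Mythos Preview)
Your proof is correct and follows exactly the approach indicated in the paper: the paper states the displayed identity for $e_S$ just before the corollary and asserts the result follows ``trivially'' from it together with Proposition~\ref{prop:parallel_typeI}, without further argument. You supply the details the paper omits (the telescoping verification of the identity, the boundary case $j=n+1$, and the observation that reducing modulo $e_{[n+1]}$ simply drops the last coefficient), but the route is the same.
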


As was pointed out to us by one of the referees, this result can be stated in the language of Coxeter combinatorics as follows: the normal vectors of the Hohlweg--Lange associahedra are weights of the root system of type~$A$; the parallel facets correspond to fundamental weights and every weight of type~$A$ can be written as a linear combination, with coefficients in $\{0,-1,1\}$, of the fundamental weights.

We now use parallel facets to classify Hohlweg--Lange associahedra.
For a sequence $\sigma\in\{-,+\}^{n-1}$ we define the reflection of $\sigma$ as the sequence $-\sigma$,
and the reversal $\sigma^t$ as the result of reversing the order of coordinates in $\sigma$.

\begin{theorem}
\label{thm:classification_HL}
Let $\sigma_1,\sigma_2 \in \{+,-\}^{n-1}$. Then $\AssI (\sigma_1)$ and $\AssI (\sigma_2)$ 
are normally isomorphic if and only if 
$\sigma_2$ can be obtained from $\sigma_1$ by
reflections and reversals.   
\end{theorem}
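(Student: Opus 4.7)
My plan for the ``if'' direction is to exhibit explicit linear isomorphisms of normal fans for each basic operation. For the reversal $\sigma\mapsto\sigma^t$, I would use the fact that this corresponds at the level of abstract vertex labels to the involution $\rho\colon i\mapsto n{+}2{-}i$ of the $(n{+}3)$-gon (a left--right reflection of $P_{n+3}(\sigma)$); the induced coordinate swap $x_i\mapsto x_{n+2-i}$ on $\R^{n+1}$ sends the normal $e_{S_\delta(\sigma^t)}$ to $e_{S_{\rho(\delta)}(\sigma)}$, as a direct check of the rule defining $S_{ij}$ shows. For the sign reflection $\sigma\mapsto -\sigma$, I would establish the combinatorial identity $S_\delta(-\sigma)=\{1,\dots,n{+}1\}\setminus S_\delta(\sigma)$ for every diagonal $\delta$ (by a brief case analysis on whether $0$ and $n{+}2$ lie above or below $\delta$); this yields $e_{S_\delta(-\sigma)}\equiv -e_{S_\delta(\sigma)}$ modulo $e_{[n+1]}$, so the linear map $x\mapsto -x$ provides the isomorphism.

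For the converse direction, my strategy is to analyze the pairs of parallel facets using Proposition~\ref{prop:parallel_typeI}. A normal isomorphism $\AssI(\sigma_1)\cong\AssI(\sigma_2)$ induces a face-lattice isomorphism, which by Lemma~\ref{lemma:automorphism_rotation-reflection} comes from a dihedral symmetry $\phi$ of $C_{n+3}$; and since linear isomorphisms of fans preserve parallelism, $\phi$ must send the $n$ parallel pairs of $\AssI(\sigma_1)$ bijectively onto those of $\AssI(\sigma_2)$. The key observation is that Proposition~\ref{prop:parallel_typeI} associates to each of the $n$ parallel pairs a distinguished polygon edge $(j,j{+}1)$ with $j\in[n]$; thus $\phi$ permutes the set $\{(1,2),\dots,(n,n{+}1)\}$ of $n$ polygon edges, and hence also stabilizes its $3$-edge complement $\{(0,1),(n{+}1,n{+}2),(0,n{+}2)\}$, which is the path $1\!-\!0\!-\!(n{+}2)\!-\!(n{+}1)$ in $C_{n+3}$.

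Next I would observe that for $n\geq 2$, the stabilizer of this $4$-vertex path in $D_{n+3}$ has order $2$: only the identity and the reflection $\rho\colon i\mapsto n{+}2{-}i\pmod{n{+}3}$ fix it setwise. Finally, for each of these two cases I would verify that the induced constraint on $\sigma_2$ forces it into the orbit $\{\sigma_1,-\sigma_1,\sigma_1^t,-\sigma_1^t\}$: unpacking the definitions of $i_j$ and $k_j$ in Proposition~\ref{prop:parallel_typeI} shows that these indices track the previous and next sign changes around position $j$ in $\widetilde\sigma$, so the $n$ quadrilaterals determine $\widetilde\sigma$ up to global sign. The identity case then forces the sign changes of $\widetilde\sigma_1$ and $\widetilde\sigma_2$ to coincide, yielding $\sigma_2\in\{\sigma_1,-\sigma_1\}$; the $\rho$ case forces the identities $i_{n+1-j}(\sigma_2)=n{+}2-k_j(\sigma_1)$ and $k_{n+1-j}(\sigma_2)=n{+}2-i_j(\sigma_1)$, which translate into the sign-change pattern of $\widetilde\sigma_1$ reversed, yielding $\sigma_2\in\{\sigma_1^t,-\sigma_1^t\}$.

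The main technical obstacle will be the bookkeeping of sign conventions: correctly handling the fourfold ambiguity in $\widetilde\sigma$ from Remark~\ref{remark:change-signs}, reconstructing $\widetilde\sigma$ from the parallel-pair data up to global sign, and carefully verifying the low-dimensional cases where the stabilizer of the distinguished path might a priori be larger. A secondary concern is that the isomorphisms produced in the ``if'' direction should be verified to be genuinely linear isomorphisms of normal fans, not just combinatorial ones; but this follows from the fact that $x\mapsto -x$ and the coordinate permutation are linear on $\R^{n+1}$.
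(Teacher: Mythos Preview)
Your ``if'' direction is correct and essentially identical to the paper's: the linear maps $x\mapsto -x$ and $x_i\mapsto x_{n+2-i}$ realize the normal isomorphisms for reflection and reversal, via $S_\delta(-\sigma)=[n{+}1]\setminus S_\delta(\sigma)$ and $S_\delta(\sigma^t)=\tau(S_\delta(\sigma))$.

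The ``only if'' direction has a genuine gap. You treat $\phi$ as a dihedral symmetry of the \emph{label cycle} $C_{n+3}$ (vertices $0,\dots,n{+}2$ with edges $\{i,i{+}1\}$ taken cyclically), but Lemma~\ref{lemma:automorphism_rotation-reflection} only tells you that $\phi$ is dihedral with respect to the cyclic boundary order of the \emph{Hohlweg--Lange polygon} $P_{n+3}(\sigma_1)$, and these two cyclic orders on $\{0,\dots,n{+}2\}$ are different. In $P_{n+3}(\sigma)$ the labels are read off by traversing first the upper chain and then the lower chain, so $\{j,j{+}1\}$ is a boundary edge only when $\widetilde\sigma_j=\widetilde\sigma_{j+1}$; when the signs differ, $\{j,j{+}1\}$ is a diagonal --- in fact it is then one of the two parallel-facet diagonals of the $j$-th quadrilateral itself. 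Hence your set $\{(1,2),\dots,(n,n{+}1)\}$ is not a set of polygon edges, the side $\{j,j{+}1\}$ of the quadrilateral $\{i,j,j{+}1,k\}$ cannot be singled out intrinsically, and there is no reason a dihedral map of the polygons should permute this set or stabilize its ``$3$-edge complement'' in $C_{n+3}$ (note also that $\{0,n{+}2\}$ is typically a long diagonal of $P_{n+3}(\sigma)$, not a boundary edge). Your endgame --- reconstructing $\widetilde\sigma$ from the quadrilaterals and splitting into the identity/$\rho$ cases --- only makes sense once you already know that $\phi$ acts on the \emph{labels} as the identity or as $i\mapsto n{+}2{-}i$, which is precisely what remained to be proved.

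The paper circumvents this by extracting an invariant that is intrinsic to the polygon and the parallel pairs: the four chords $\{0,n{+}1\}$, $\{0,n{+}2\}$, $\{1,n{+}1\}$, $\{1,n{+}2\}$ are characterized as exactly those chords of $P_{n+3}(\sigma)$ that cross at least one diagonal of every parallel pair. Since $\phi$ preserves crossing and preserves the collection of parallel pairs, it must carry this four-element set for $\sigma_1$ to the one for $\sigma_2$; and since in every $P_{n+3}(\sigma)$ the vertices $0,1$ are polygon-adjacent and $n{+}1,n{+}2$ are polygon-adjacent, this pins $\phi$ down to the label permutations corresponding to reflection--reversal of $\widetilde\sigma$.
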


\begin{proof}
Suppose there is a linear isomorphism between the normal fans of 
$\AssI (\sigma_1)$ and $\AssI (\sigma_2)$. 
It induces an automorphism of the face lattice of the associahedron
that, by Lemma~\ref{lemma:automorphism_rotation-reflection}, 
corresponds to a certain reflection-rotation of the polygon. We denote this reflection-rotation by  
$\varphi : P_{n+3}(\sigma_1) \rightarrow P_{n+3}(\sigma_2)$.
Any linear isomorphism of the normal fans preserves
the property of a pair of facets being parallel, so  
$\varphi$ maps the ``parallel" pairs of diagonals of $P_{n+3}(\sigma_1)$,
to the ``parallel" pairs of diagonals of $P_{n+3}(\sigma_2)$.  
Furthermore, for both realizations there are exactly four diagonals 
that cross at least one diagonal of every parallel pair; they are 
$\{0,n+1\},\{0,n+2\},\{1,n+1\}$ and $\{1,n+2\}$. The set of these
four diagonals is also preserved under $\varphi$. 
This is possible only if $\varphi$ is a 
reflection-rotation of $P_{n+3}(\sigma_1)$, which
corresponds to a reflection-reversal of the sequence 
$\widetilde {\sigma_1}=\{+,-,\sigma_1,-,+\}$. 

It remains to be proved that $\AssI (\sigma)$ is normally-isomorphic 
to both $\AssI (-\sigma )$ and $\AssI (\sigma^t)$. 
The isomorphism between the normal fans of $\AssI(\sigma)$ and $\AssI(-\sigma)$
is given by multiplication by $-1$, since 
$S_\delta(-\sigma) = [n]- S_\delta(\sigma)$. The isomorphism 
between the normal fans of $\AssI(\sigma)$ and $\AssI(\sigma^t)$
is given by the permutation of coordinates $\tau(i)=n+1-i$, as 
$S_\delta(\sigma^t) = \tau(S_\delta(\sigma)) $. 
\end{proof}

In particular, putting together Proposition~\ref{prop:special-cases-typeI} and Theorem~\ref{thm:classification_HL}, one obtains
that the Loday associahedron is not normally isomorphic to the Chapoton--Fomin--Zelevinsky associahedron, for $n\ge 3$.

\begin{remark}
\label{rem:gen-associahedra-class}
Bergeron, Hohlweg, Lange and Thomas~\cite[Thm.~2.3]{BergeronHLT}  classify the Hohlweg--Lange--Thomas $c$-generalized associahedra up to isometry, and also up to isometry of normal fans \cite[Cor.~2.6]{BergeronHLT}. 
Even if those classifications yield the same result as ours, they do not automatically imply it.
As an example of why these classifications are potentially different, consider the rhombus obtained by removing two opposite facets of a regular hexagon. This generalized permutahedron admits a normal automorphism that is not an isometry of normal fans: the affine map that interchanges acute and obtuse angles. That is to say, the fact that Theorem~\ref{thm:classification_HL}
yields the same classes as the classification in~\cite{BergeronHLT} implies the following result for Hohlweg--Lange associahedra, which fails for other generalized permutahedra:

\begin{proposition}
\label{prop:isometry}
$\AssI(\sigma)$ and $\AssI(\sigma')$  are normally isomorphic if and only if they are isometric.
\end{proposition}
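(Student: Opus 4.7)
The implication ``isometric $\Rightarrow$ normally isomorphic'' is essentially a tautology: any isometry of $\R^{n+1}$ carrying one polytope onto the other induces an isometry of their normal fans, which is in particular a linear isomorphism, hence a normal isomorphism. So the content of the proposition lies entirely in the converse.

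For the converse, my plan is to use Theorem~\ref{thm:classification_HL} together with the explicit form of the isomorphisms produced in its proof. If $\AssI(\sigma)$ and $\AssI(\sigma')$ are normally isomorphic, then by Theorem~\ref{thm:classification_HL}, $\sigma'$ is obtained from $\sigma$ by a composition of reflections ($\sigma\mapsto-\sigma$) and reversals ($\sigma\mapsto\sigma^t$). The corresponding linear isomorphisms of normal fans exhibited in the proof of Theorem~\ref{thm:classification_HL} are multiplication by $-1$ (for the reflection) and the coordinate permutation $i\mapsto n+2-i$ (for the reversal). The crucial observation is that both of these linear maps are orthogonal transformations of $\R^{n+1}$, i.e.\ isometries.

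It then remains to check that, possibly after composing with a translation (which is also an isometry, and is needed only to restore the affine hyperplane $\sum x_i=\binom{n+2}{2}$ in the reflection case), these isometries actually carry $\AssI(\sigma)$ onto $\AssI(-\sigma)$ and $\AssI(\sigma^t)$. This reduces to checking that the right-hand sides in Definition~\ref{def:HLass} transform correctly. For the reversal this is immediate, since the right-hand sides depend only on the cardinalities $|S_\delta(\sigma)|$, which are preserved under $\tau$. For the reflection one uses $x\mapsto (n+2)\mathbf{1}-x$: this isometry sends the inequality $\sum_{i\in S}x_i\ge\tfrac12|S|(|S|+1)$ to $\sum_{i\in\overline S}y_i\ge\tfrac12|\overline S|(|\overline S|+1)$, which is precisely the inequality for the complementary set $\overline S=S_\delta(-\sigma)$. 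The main obstacle here is only bookkeeping, not conceptual.

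Equivalently, and perhaps more cleanly, one may combine Theorem~\ref{thm:classification_HL} directly with the isometry classification of Bergeron--Hohlweg--Lange--Thomas \cite[Thm.~2.3]{BergeronHLT}: both classifications yield the same equivalence classes of sign sequences in $\{+,-\}^{n-1}$ modulo reflection and reversal, so the two notions of equivalence must coincide. The point worth emphasizing (and the reason this fails for arbitrary generalized permutahedra, as the rhombus example shows) is that the normal fans of the Hohlweg--Lange associahedra are rigid enough that no normal automorphism can fail to be an isometry.
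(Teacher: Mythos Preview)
Your proposal is correct. Your second approach---comparing the classification of Theorem~\ref{thm:classification_HL} with the isometry classification of~\cite{BergeronHLT} and observing that both yield the same equivalence classes---is exactly what the paper does; indeed the proposition is stated in a remark precisely as a consequence of this coincidence of classifications, with no further argument given.

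Your first approach goes a bit further than the paper: rather than invoking~\cite{BergeronHLT} as a black box, you verify directly that the explicit normal isomorphisms exhibited in the proof of Theorem~\ref{thm:classification_HL} (multiplication by $-1$ and the coordinate reversal $i\mapsto n+2-i$) are isometries, and that after the appropriate translation they actually carry the polytopes onto one another. This is a self-contained alternative that avoids the dependence on~\cite{BergeronHLT}; the right-hand-side computation you sketch for the reflection case checks out. Either route is fine, and you present both clearly.
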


We do not know whether the same is true for $c$-generalized associahedra in other types.
\end{remark}

\section{Catalan many realizations, by Santos}
\label{sec:triang-many}

In this section we describe a generalization of the Chapoton--Fomin--Zelevinsky  construction of the associahedron (Section~\ref{section_clustercomplex}). It was
originally presented at a conference in 2004 \cite{Sa04}, but unpublished until now.
We prove that the number of normally non-isomorphic realizations obtained
this way, our ``type II exponential family'', is equal to the number of triangulations of an $(n+3)$-gon modulo reflections and rotations. This number equals
\[
\tfrac{1}{2(n+3) }C_{n+1}+ \tfrac{1}{4}C_{(n+1)/2} + \tfrac{1}{2}C_{\lfloor (n+1)/2 \rfloor} + \tfrac{1}{3}C_{n/3},
\]
where $C_n=\frac{1}{n+1} \binom{2n}{n}$ for $n\in\mathbb Z$ and $C_n=0$ otherwise.
Interest in this sequence goes back to Motzkin (1948) \cite{Mo48},~\cite[Sequence A000207]{Sloane}. 

Let $\alpha_1,\dots,\alpha_n$ denote a linear basis of an $n$-dimensional real vector space $V\cong \R^n$, and let $T_0$ be a certain triangulation of the $(n+3)$-gon, fixed once and for all throughout the construction. We call $T_0$ the \emph{seed triangulation}. The CFZ associahedron will arise as the special case where $V=\{(x_1,\dots,x_{n+1})\in \R^{n+1}:\sum x_i=0\}$, $\alpha_i=e_i-e_{i+1}$, and $T_0$ is the \emph{snake triangulation} of Figure~\ref{snake}.

Let $\{\delta_1,\dots,\delta_n\}$ denote the $n$ diagonals present in the seed triangulation $T_0$. To each diagonal $pq$ out of the $\frac{n(n+3)}{2}$ possible diagonals of the $(n+3)$-gon we associate a vector $v_{pq}$ as follows:
\begin{compactitem}[$\circ$]
\item If $pq=\delta_i$ for some $i$ (that is, if $pq$ is used in $T_0$) then let $v_{pq}= - \alpha_i$.
\item If $pq\not\in T_0$ then let
\[
v_{pq}:=\sum_{pq \text{ crosses } \delta_i} \alpha_i.
\]
\end{compactitem}

As a running example, consider the triangulation $\{123,345,156,135\}$ of a hexagon with its vertices labelled cyclically. Let $\delta_1=13$, $\delta_2=35$ and $\delta_3=15$. Written with respect to the basis $\{\alpha_1,\alpha_2,\alpha_3\}$ the nine vectors $v_{pq}$ that we get are as follows (see Figure~\ref{fig:hexagon}):

\begin{figure}[ht]
	\centering

\begin{picture}(0,0)%
\includegraphics{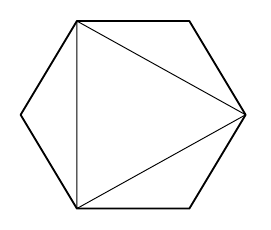}%
\end{picture}%
\setlength{\unitlength}{3947sp}%
\begingroup\makeatletter\ifx\SetFigFont\undefined%
\gdef\SetFigFont#1#2#3#4#5{%
  \reset@font\fontsize{#1}{#2pt}%
  \fontfamily{#3}\fontseries{#4}\fontshape{#5}%
  \selectfont}%
\fi\endgroup%
\begin{picture}(2163,1830)(4186,-4276)
\put(4201,-3436){\makebox(0,0)[lb]{\smash{{\SetFigFont{12}{14.4}{\rmdefault}{\mddefault}{\updefault}{\color[rgb]{0,0,0}2}%
}}}}
\put(4651,-4261){\makebox(0,0)[lb]{\smash{{\SetFigFont{12}{14.4}{\rmdefault}{\mddefault}{\updefault}{\color[rgb]{0,0,0}3}%
}}}}
\put(6226,-3436){\makebox(0,0)[lb]{\smash{{\SetFigFont{12}{14.4}{\rmdefault}{\mddefault}{\updefault}{\color[rgb]{0,0,0}5}%
}}}}
\put(5776,-2611){\makebox(0,0)[lb]{\smash{{\SetFigFont{12}{14.4}{\rmdefault}{\mddefault}{\updefault}{\color[rgb]{0,0,0}6}%
}}}}
\put(5776,-4261){\makebox(0,0)[lb]{\smash{{\SetFigFont{12}{14.4}{\rmdefault}{\mddefault}{\updefault}{\color[rgb]{0,0,0}4}%
}}}}
\put(4576,-3436){\makebox(0,0)[lb]{\smash{{\SetFigFont{12}{14.4}{\rmdefault}{\mddefault}{\updefault}{\color[rgb]{0,0,0}$\delta_1$}%
}}}}
\put(5401,-2911){\makebox(0,0)[lb]{\smash{{\SetFigFont{12}{14.4}{\rmdefault}{\mddefault}{\updefault}{\color[rgb]{0,0,0}$\delta_3$}%
}}}}
\put(5176,-3736){\makebox(0,0)[lb]{\smash{{\SetFigFont{12}{14.4}{\rmdefault}{\mddefault}{\updefault}{\color[rgb]{0,0,0}$\delta_2$}%
}}}}
\put(4651,-2611){\makebox(0,0)[lb]{\smash{{\SetFigFont{12}{14.4}{\rmdefault}{\mddefault}{\updefault}{\color[rgb]{0,0,0}1}%
}}}}
\end{picture}%
	\caption{A seed triangulation for Santos' construction.}
	\label{fig:hexagon}
\end{figure}
\[
\begin{array}{lll}
v_{13}=-\alpha_1=(-1,0,0), & v_{35}=-\alpha_2 =(0,-1,0), & v_{15}=-\alpha_3=(0,0,-1), \cr
v_{25}=\alpha_1=(1,0,0), & v_{14}=\alpha_2 =(0,1,0), & v_{36}=\alpha_3=(0,0,1), \cr
v_{46}=\alpha_2+\alpha_3=(0,1,1), & v_{26}=\alpha_1+\alpha_3 =(1,0,1), & v_{24}=\alpha_1+\alpha_2=(1,1,0). \cr
\end{array}
\]

With a slight abuse of notation we denote with the same symbol a subset of diagonals of the polygon and the set of vectors associated with them. For example, $\R_{\ge0} T_0=\R_{\ge0} \{-\alpha_1,\dots,-\alpha_n\}$ is the negative orthant in $V$ (with respect to the basis $[\alpha_i]_i$). 
More generally, for each triangulation $T$ of the $(n+3)$-gon consider the cone $\R_{\ge0} T$. We claim the following generalizations of Theorems~\ref{th_simpfan} and~\ref{theorem_cluster}:

\begin{theorem}\label{thm:triangmany_fan}
The simplicial cones $\R_{\geq 0} T$ generated by all triangulations $T$ of the $(n+3)$-gon
form a complete simplicial fan ${\mathcal F}_{T_0}$ in the ambient space $V$.
\end{theorem}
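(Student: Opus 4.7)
I would proceed by induction on the flip distance $d(T,T_0)$, simultaneously establishing that each $\R_{\ge 0}T$ is a full-dimensional simplicial cone and that cones indexed by flip-adjacent triangulations meet correctly along a common facet; the global fan property then follows by a topological argument. The base case $T=T_0$ is immediate: the generators $\{v_\delta:\delta\in T_0\}=\{-\alpha_1,\dots,-\alpha_n\}$ form the given basis of $V$, so $\R_{\ge 0}T_0$ is the closed negative orthant.

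The heart of the argument is a \emph{flip-exchange relation}. Suppose $T'$ is obtained from $T$ by replacing a diagonal $\delta$ with its flip $\delta'$, and suppose inductively that $\{v_\gamma:\gamma\in T\}$ is a basis. Writing
\[
v_{\delta'}=\lambda\, v_\delta+\sum_{\gamma\in T\cap T'}\mu_\gamma\, v_\gamma,
\]
the key claim is that the coefficient $\lambda$ is \emph{strictly negative}. This accomplishes two things at once: it shows that $\{v_\gamma:\gamma\in T'\}$ is again a basis (so simpliciality propagates through flips), and it shows that $\R_{\ge 0}T$ and $\R_{\ge 0}T'$ lie on opposite sides of the hyperplane spanned by $\{v_\gamma:\gamma\in T\cap T'\}$. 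Since the flip graph of triangulations of $P_{n+3}$ is classically known to be connected, induction then gives simpliciality of every cone $\R_{\ge 0}T$ together with the \emph{pseudomanifold property}: every codimension-one face of some cone is shared by exactly one other cone.

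To establish the flip-exchange relation, I would do a case analysis, for each $\alpha_i$, based on how the seed diagonal $\delta_i$ sits relative to the quadrilateral $Q$ formed by the two triangles of $T$ incident to $\delta$. Comparing the $\alpha_i$-coefficient of $v_{\delta'}$ with the candidate linear combination using $v_\delta$ and the $v_\gamma$ for $\gamma\in T\cap T'$, one treats separately the subcases where $\delta_i$ is disjoint from the interior of $Q$, crosses two opposite sides of $Q$, crosses two adjacent sides, coincides with a side of $Q$, and coincides with $\delta$ or $\delta'$ itself. Each case contributes consistently, and assembling them yields $\lambda<0$.

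Completeness then follows from a standard topological argument. The union $U=\bigcup_T\R_{\ge 0}T$ is closed in $V$ as a finite union of closed cones. The pseudomanifold property, together with an inductive check on codimension to account for higher-codimension faces (whose links are handled by the flip structure on triangulations fixing a given set of diagonals), implies that $U$ has empty topological boundary in $V$. Hence $U$ is both open and closed in the connected space $V$, and since $U\neq\emptyset$ we conclude $U=V$; combined with the face compatibility already established, this shows that the cones $\R_{\ge 0}T$ and their faces form a complete simplicial fan. The main obstacle is the flip-exchange relation itself: while it is straightforward to verify in small examples, the general proof requires careful bookkeeping of the various ways the seed diagonals $\delta_i$ can interact with $Q$, and in particular of the delicate subcases where $\delta$, $\delta'$, or a side of $Q$ is itself one of the seed diagonals.
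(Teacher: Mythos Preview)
Your high-level strategy matches the paper's exactly: establish a flip-exchange relation showing that $v_\delta$ and $v_{\delta'}$ enter the unique linear dependence on $T\cup T'$ with the same sign, propagate simpliciality through the flip graph, and conclude by a general fan argument. The paper packages the last step by citing the ``ICoP/IPP'' criterion of \cite[Cor.~4.5.20]{LoRaSa10}, whereas you sketch it by hand; both are fine.

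The one substantive difference is in how the key lemma is organized. You propose a per-$\alpha_i$ case analysis, looking at how each individual seed diagonal $\delta_i$ sits relative to the flip quadrilateral $Q=pqrs$ and comparing coefficients. The paper instead does a \emph{global} four-case analysis on how the seed triangulation $T_0$ as a whole meets $Q$ (roughly: either some $\delta_i$ crosses two opposite sides of $Q$, or one of $pr,qs$ lies in $T_0$, or a triangle of $T_0$ has a vertex at $p,q,r,s$ with its opposite edge cutting two sides of $Q$, or a triangle of $T_0$ shares an edge with $Q$). In each case the paper writes down an explicit short dependence involving only $v_{pr}$, $v_{qs}$, and the \emph{sides} of $Q$, e.g.\ $v_{pr}+v_{qs}=v_{pq}+v_{rs}$ or $2v_{pr}+v_{qs}=v_{qr}+v_{rs}$. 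Your per-$\delta_i$ analysis is precisely what one uses to \emph{verify} these identities, but it does not by itself \emph{produce} the candidate combination; in particular it does not reveal that the coefficient you call $\lambda$ is sometimes $-1$ and sometimes $-2$. So your plan would benefit from first isolating the global cases (or at least observing that the dependence is supported on $Q$), and then invoking your coefficient comparison.

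One small gap in your topological wrap-up: the pseudomanifold property plus ``$U$ closed with empty boundary'' gives $U=V$, but you still need to exclude multiple covering (two full-dimensional cones overlapping on an open set). The paper handles this with its assertion~(1): the negative orthant is covered by $\R_{\ge 0}T_0$ and by no other cone, since every $v_\gamma$ with $\gamma\notin T_0$ lies in the closed positive orthant. You should add this observation (or an equivalent degree argument) to your sketch.
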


\begin{theorem}\label{thm:triangmany_regular}
This fan ${\mathcal F}_{T_0}$ is the normal fan of an $n$-dimensional associahedron.
\end{theorem}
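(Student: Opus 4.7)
By Theorem~\ref{thm:triangmany_fan}, $\mathcal F_{T_0}$ is a complete simplicial fan, so it suffices to prove that it is polytopal. The standard criterion (see \cite[Sec.~7.1]{Zi}) is that one can assign heights $h_{pq} \in \R$ to the rays $v_{pq}$ so that the resulting piecewise-linear support function is \emph{strictly convex}, equivalently, so that the wall-crossing inequality holds strictly at every wall of $\mathcal F_{T_0}$. My plan is to construct such heights explicitly and identify the Santos associahedron as the polytope
\[
P_{T_0}(\mathbf h)\;=\;\bigl\{x\in V : \langle v_{pq}, x\rangle \le h_{pq} \text{ for all diagonals } pq\text{ of } P_{n+3}\bigr\}.
\]

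\textbf{Step 1 (linear dependences at walls).} Each wall of $\mathcal F_{T_0}$ is shared by cones $\R_{\ge 0}T$ and $\R_{\ge 0}T'$ where $T,T'$ differ by a flip of a diagonal $\delta$ into $\delta'$ inside a quadrilateral $Q$ of $P_{n+3}$. The $n+1$ rays $\{v_\gamma : \gamma\in T\cup T'\}$ admit, up to scalar, a unique linear dependence. I would first prove that, normalized so that the coefficients of $v_\delta$ and $v_{\delta'}$ are both $+1$, this relation has the form
\[
v_\delta + v_{\delta'} \;=\; \sum_{\gamma \in T \cap T'} c_\gamma\, v_\gamma,
\qquad c_\gamma\in\{0,1\},
\]
and moreover the indices $\gamma$ with $c_\gamma =1$ lie among the sides of $Q$ (with polygon edges of $Q$ contributing $v_\gamma = 0$, hence dropping out). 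This can be checked by direct computation from the defining formula for $v_{pq}$ (counting signed crossings with the diagonals of $T_0$), case-analysing the position of $Q$ relative to $T_0$.

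\textbf{Step 2 (choice of heights).} Guided by Loday's product formula and the explicit inequalities for the CFZ associahedron in \cite[Cor.~1.9]{CFZ02} (which is the special case $T_0 =$ snake), I would try heights of the form $h_{pq} = L(p,q)\,R(p,q)$, where $L(p,q)$ and $R(p,q)$ denote the numbers of polygon vertices strictly on either side of $pq$, with an appropriate sign convention for $\delta\in T_0$ (so as to compensate the sign flip $v_{\delta}=-\alpha_i$). Using Step~1, the strict convexity condition at a wall reduces to
\[
h_\delta + h_{\delta'} \;>\; \sum_{\gamma \in T \cap T'} c_\gamma\, h_\gamma,
\]
which, since the right-hand side has at most two terms corresponding to sides of $Q$, becomes a \emph{Ptolemy-type inequality} relating the products $L\cdot R$ for the two diagonals and the sides of the convex quadrilateral $Q$. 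This is checked in a small number of cases.

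\textbf{Step 3 (conclusion).} Once strict convexity is established, $\mathcal F_{T_0}$ is the normal fan of $P_{T_0}(\mathbf h)$. Combined with Theorem~\ref{thm:triangmany_fan}, this shows that $P_{T_0}(\mathbf h)$ is a simple $n$-dimensional polytope whose face lattice is anti-isomorphic to the poset of non-crossing sets of diagonals of $P_{n+3}$, hence an associahedron in the sense of Section~\ref{sec:preliminaries}.

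\textbf{Main obstacle.} The main difficulty lies in Step~1: the exact form of the linear dependence depends on how the flip quadrilateral $Q$ sits relative to $T_0$, and a uniform case-free description is subtle. If the naive product heights of Step~2 turn out to fail in some configuration, the fallback plan is either to take heights of the form $h_{pq} = \varphi(p)+\varphi(q)$ for a strictly convex function $\varphi$ on the vertices of $P_{n+3}$, or to adapt the CFZ weight-basis formula of \cite[Cor.~1.9]{CFZ02} to an arbitrary seed $T_0$ by replacing the simple roots $\alpha_i$ there with the $\alpha_i$'s attached to the diagonals of $T_0$.
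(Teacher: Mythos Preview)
Your overall strategy—prove polytopality via the wall-crossing criterion and explicit heights—is exactly the paper's approach. But there are two concrete problems.

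\textbf{Step 1 is incorrect as stated.} You claim that, after normalization, both flipped diagonals carry coefficient $+1$ in the linear dependence. This fails. The paper's case analysis (Lemma~\ref{lemma:triangmany_fan}) shows that when the seed $T_0$ contains a triangle $pbc$ with $p$ a vertex of the flip quadrilateral $pqrs$ and $bc$ crossing both $qr$ and $rs$, the dependence is
\[
2v_{pr}+v_{qs}=v_{qr}+v_{rs},
\]
so one flipped diagonal has coefficient $2$. Your claim that the nonzero $c_\gamma$ lie among the sides of $Q$ is correct, but the left-hand side is not always symmetric.

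\textbf{The product heights of Step 2 do not work in general.} Even with the correct dependences, the inequality can fail when one flipped diagonal lies in $T_0$ (the paper's case~(b)). Concretely, take $N=n+3=15$, $p=0$, $q=3$, $r=4$, $s=5$, with $T_0$ containing $pr=04$ and the two adjacent triangles $\{0,1,4\}$ and $\{0,4,6\}$. Then the dependence is $v_{04}+v_{35}=v_{03}+v_{05}$, but with $h_{ij}=L(i,j)R(i,j)$ one gets
\[
h_{04}+h_{35}=30+12=42<58=22+36=h_{03}+h_{05}.
\]
So the wall-crossing inequality is violated. (The same failure occurs for the arc-length product $g_{ij}=(j-i)(N-(j-i))$.)

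The paper sidesteps this by a two-tier choice of weights: set $\omega_{ij}=2$ if $ij\in T_0$ and $\omega_{ij}=1$ otherwise. This makes all four cases satisfy $\sum\lambda_{ij}\omega_{ij}\ge 0$, with equality only in the ``generic'' case~(a) where the dependence is $v_{pr}+v_{qs}=v_{pq}+v_{rs}$. A small perturbation $\omega_{ij}\to 1+\varepsilon g_{ij}$ for $ij\notin T_0$ (with $g_{ij}$ your product) then fixes case~(a) without spoiling the others. The point you are missing is that the diagonals of $T_0$ need extra weight: a pure ``Ptolemy-type'' height cannot distinguish a flip that straddles $T_0$ from one that does not.
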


Our proofs are based on the understanding of a complete simplicial fan as a \emph{triangulation of a totally cyclic vector configuration}, which makes \emph{regular triangulations} correspond to normal fans of simple polytopes (see~\cite[Sects.~2.5, 9.5]{LoRaSa10}, and compare our two statements to steps (1) and (2) in~\cite[p.~503]{LoRaSa10}). Incidentally, this method is  illustrated there by constructing the normal fan of the Loday associahedron and showing its polytopality.

\subsection{Proof of Theorem~\ref{thm:triangmany_fan}}
The statement follows from the following two claims:
\begin{compactenum}[(1)]
\item $\R_{\geq 0} T_0$ is a simplicial cone and is the only cone in ${\mathcal F}_{T_0}$ that intersects (the interior of) the negative orthant.
\item If $T_1$ and $T_2$ are two triangulations that differ by a flip, let  $v_1\in T_1$ and $v_2\in T_2$ be the diagonals removed and inserted by the flip. That is,
$T_1\setminus T_2 =\{v_1\}$ and $T_2\setminus T_1= \{v_2\}$. Then there is a linear dependence in $T_1\cup T_2$ which has coefficients of the same sign (and different from zero) in the elements $v_1$ and $v_2$.
\end{compactenum}
The first assertion is obvious, and the second one is Lemma~\ref{lemma:triangmany_fan} below. Before proving it let us argue why these two assertions imply Theorem~\ref{thm:triangmany_fan}. Suppose that we have two triangulations $T_1$ and $T_2$ related by a flip as in the second assertion, and suppose that we already know that one of them, say $T_1$, spans a full-dimensional cone (that is, we know that $T_1$ considered as a set of vectors is independent). Then assertion (2) implies that $T_2$ spans a full-dimensonal cone as well and that $\R_{\geq 0}T_1$ and $\R_{\geq 0}T_2$ lie in opposite sides of their common facet $\R_{\geq 0}(T_1\cap T_2)$. This, together with the fact that there is some part of $V$ covered by exactly one cone (which is why we need assertion (1)) implies that we have a complete fan. (See, for example, \cite[Cor.~4.5.20]{LoRaSa10}, where assertion (2) is a special case of ``property (ICoP)'' and assertion (1) a special case of ``property (IPP)''.)

\begin{lemma}\label{lemma:triangmany_fan}
Let $T_1$ and $T_2$ be two triangulations that differ by a flip, and let  $v_1$ and $v_2$ be the diagonals removed and inserted by the flip from $T_1$ to $T_2$, respectively (that is,
$T_1\setminus T_2 =\{v_1\}$ and $T_2\setminus T_1= \{v_2\}$). Then there is a linear dependence in $T_1\cup T_2$ which has coefficients of the same sign in the elements $v_1$ and $v_2$.
\end{lemma}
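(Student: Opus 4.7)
The plan is to exhibit the desired linear dependence explicitly, by establishing the key structural claim
\[
v_{pr}+v_{qs}\ \in\ \operatorname{span}\{v_\delta:\delta\in T_1\cap T_2\}.
\]
Once this is shown, writing $v_{pr}+v_{qs}=\sum_{\delta\in T_1\cap T_2} c_\delta\, v_\delta$ and transposing yields a linear dependence in $T_1\cup T_2$ in which both $v_1=v_{pr}$ and $v_2=v_{qs}$ carry the coefficient $+1$, nonzero and of the same sign.

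Let $pqrs$ be the quadrilateral whose two crossing diagonals are $v_1$ and $v_2$; its four sides $pq,qr,rs,sp$ are each either a polygon edge (contributing $v_s=0$) or a polygon diagonal belonging to $T_1\cap T_2$, and the remainder of $T_1\cap T_2$ consists of the diagonals triangulating the four ``pockets'' cut off by the sides of $pqrs$. To establish the key claim, I would compute $v_{pr}+v_{qs}$ coordinate by coordinate in the basis $\{\alpha_1,\dots,\alpha_n\}$ attached to $T_0$. For each seed diagonal $\delta_i\in T_0$, the coefficient of $\alpha_i$ in $v_{pr}+v_{qs}$ depends only on the positions of the two endpoints of $\delta_i$ relative to the quadrilateral---each endpoint lies either at one of the four vertices $p,q,r,s$ or in one of the four open arcs $(p,q),(q,r),(r,s),(s,p)$. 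A direct case-by-case tabulation (recording when $\delta_i$ crosses $pr$ and when it crosses $qs$) then produces the identity
\[
v_{pr}+v_{qs}\ =\ \sum_{\substack{s\in\{pq,qr,rs,sp\}\\ s\notin T_0}} v_s\ +\ \sum_{\substack{\delta\in T_0\\ \text{endpoints in adjacent open arcs}}} v_\delta,
\]
where two open arcs are called \emph{adjacent} if they share a vertex of the quadrilateral.

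The first sum already lies in $\operatorname{span}(T_1\cap T_2)$, since each such side is either an element of $T_1\cap T_2$ or a polygon edge with $v_s=0$. The main obstacle is the second sum: each adjacent-arc $\delta\in T_0$ crosses exactly one of $pr,qs$ and hence does \emph{not} belong to $T_1\cap T_2$. To reabsorb these contributions I plan to invoke the tautological path relations
\[
v_\eta\ =\ -\sum_{\delta_i\in T_0,\ \eta\text{ crosses }\delta_i} v_{\delta_i}\qquad (\eta\notin T_0),
\]
applied to the sides of $pqrs$ that are polygon diagonals not in $T_0$ and, if needed, to interior diagonals of the pockets. The delicate step is the bookkeeping: the path of such a side through $T_0$ may cross seed diagonals other than adjacent-arc ones, and the resulting surplus terms must ultimately cancel. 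Once this matching is carried out, the adjacent-arc sum is rewritten in terms of $v_\delta$ with $\delta\in T_1\cap T_2$, completing the argument.
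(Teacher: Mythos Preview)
Your key structural claim,
\[
v_{pr}+v_{qs}\ \in\ \operatorname{span}\{v_\delta:\delta\in T_1\cap T_2\},
\]
is false in general, so the whole plan collapses. Take the hexagon with seed triangulation $T_0=\{13,35,15\}$ (labelled $\delta_1,\delta_2,\delta_3$) and the quadrilateral $pqrs=1246$, so that $v_1=v_{14}$, $v_2=v_{26}$, and $T_1\cap T_2=\{24,46\}$. Then
\[
v_{14}+v_{26}=(0,1,0)+(1,0,1)=(1,1,1),
\qquad
v_{24}=(1,1,0),\quad v_{46}=(0,1,1),
\]
and $(1,1,1)$ is not a linear combination of $(1,1,0)$ and $(0,1,1)$. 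Since $T_1$ is a basis, $T_1\cap T_2$ spans a hyperplane not containing $v_{14}$; if your claim held, subtracting from the actual dependence would force $v_{14}$ into that hyperplane. So the ``reabsorption'' you describe cannot succeed: the surplus terms \emph{cannot} cancel, because the vector you are trying to reach simply does not lie in the target span.

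What goes wrong conceptually is that the coefficients on $v_1$ and $v_2$ in the genuine linear dependence need not both equal $1$. In the example above (and in the situation the paper isolates as its case (c), where $T_0$ contains a triangle with one vertex at $p$ and opposite edge crossing the two sides $qr$ and $rs$), the correct relation is
\[
2\,v_{pr}+v_{qs}\ =\ v_{qr}+v_{rs},
\]
with coefficients $2$ and $1$ on the flipped diagonals. The paper proceeds by a short case analysis on how $T_0$ meets the quadrilateral $pqrs$, obtaining in each case an explicit dependence whose right-hand side involves only sides of $pqrs$ (hence elements of $T_1\cap T_2$ or boundary edges), but with left-hand side $v_{pr}+v_{qs}$ in three of the cases and $2v_{pr}+v_{qs}$ in the remaining one. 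Your intermediate identity for $v_{pr}+v_{qs}$ is in fact correct; the error is the expectation that the ``adjacent-arc'' sum can always be rewritten inside $\operatorname{span}(T_1\cap T_2)$.
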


\begin{proof}
Let $p$, $q$, $r$ and $s$ be the four points involved by the two diagonals $v_1$ and $v_2$, in cyclic order. That is, the diagonals removed and inserted are $pr$ and $qs$. We claim that one (and exactly one) of the following things occurs (see Figure~\ref{fig:triangmany_cases}):
\begin{compactenum}[(a)]
\item There is a diagonal in the seed triangulation $T_0$ that crosses two opposite edges of the quadrilateral $pqrs$.
\item One of $pr$ and $qs$ is used in the seed triangulation $T_0$.
\item There is a triangle $abc$ in $T_0$ with a vertex in $pqrs$  and the opposite edge crossing two sides of $pqrs$ (that is, without loss of generality $p=a$ and $bc$ crosses both $qr$ and $rs$).
\item There is a triangle $abc$ in $T_0$ with an edge in common with $pqrs$ and with the other two edges of the triangle crossing the opposite edge of the quadrilateral (that is, without loss of generality, $p = a$, $q=b$  and $rs$ crosses both $ac$  and $bc$).
\end{compactenum}

\begin{figure}[ht]
	\centering
	
\begin{picture}(0,0)%
\includegraphics{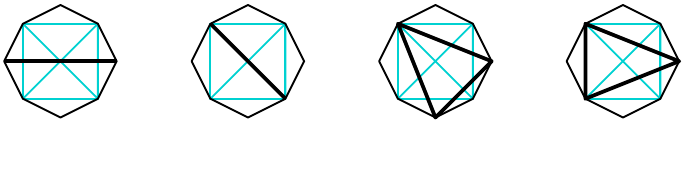}%
\end{picture}%
\setlength{\unitlength}{3947sp}%
\begingroup\makeatletter\ifx\SetFigFont\undefined%
\gdef\SetFigFont#1#2#3#4#5{%
  \reset@font\fontsize{#1}{#2pt}%
  \fontfamily{#3}\fontseries{#4}\fontshape{#5}%
  \selectfont}%
\fi\endgroup%
\begin{picture}(5466,1377)(718,-3955)
\put(751,-2761){\makebox(0,0)[lb]{\smash{{\SetFigFont{12}{14.4}{\rmdefault}{\mddefault}{\updefault}{\color[rgb]{0,0,0}$p$}%
}}}}
\put(1576,-2761){\makebox(0,0)[lb]{\smash{{\SetFigFont{12}{14.4}{\rmdefault}{\mddefault}{\updefault}{\color[rgb]{0,0,0}$s$}%
}}}}
\put(2251,-2761){\makebox(0,0)[lb]{\smash{{\SetFigFont{12}{14.4}{\rmdefault}{\mddefault}{\updefault}{\color[rgb]{0,0,0}$p$}%
}}}}
\put(3076,-2761){\makebox(0,0)[lb]{\smash{{\SetFigFont{12}{14.4}{\rmdefault}{\mddefault}{\updefault}{\color[rgb]{0,0,0}$s$}%
}}}}
\put(3751,-2761){\makebox(0,0)[lb]{\smash{{\SetFigFont{12}{14.4}{\rmdefault}{\mddefault}{\updefault}{\color[rgb]{0,0,0}$p$}%
}}}}
\put(4576,-2761){\makebox(0,0)[lb]{\smash{{\SetFigFont{12}{14.4}{\rmdefault}{\mddefault}{\updefault}{\color[rgb]{0,0,0}$s$}%
}}}}
\put(5251,-2761){\makebox(0,0)[lb]{\smash{{\SetFigFont{12}{14.4}{\rmdefault}{\mddefault}{\updefault}{\color[rgb]{0,0,0}$p$}%
}}}}
\put(6076,-2761){\makebox(0,0)[lb]{\smash{{\SetFigFont{12}{14.4}{\rmdefault}{\mddefault}{\updefault}{\color[rgb]{0,0,0}$s$}%
}}}}
\put(751,-3511){\makebox(0,0)[lb]{\smash{{\SetFigFont{12}{14.4}{\rmdefault}{\mddefault}{\updefault}{\color[rgb]{0,0,0}$q$}%
}}}}
\put(1501,-3586){\makebox(0,0)[lb]{\smash{{\SetFigFont{12}{14.4}{\rmdefault}{\mddefault}{\updefault}{\color[rgb]{0,0,0}$r$}%
}}}}
\put(2251,-3511){\makebox(0,0)[lb]{\smash{{\SetFigFont{12}{14.4}{\rmdefault}{\mddefault}{\updefault}{\color[rgb]{0,0,0}$q$}%
}}}}
\put(3001,-3511){\makebox(0,0)[lb]{\smash{{\SetFigFont{12}{14.4}{\rmdefault}{\mddefault}{\updefault}{\color[rgb]{0,0,0}$r$}%
}}}}
\put(3751,-3511){\makebox(0,0)[lb]{\smash{{\SetFigFont{12}{14.4}{\rmdefault}{\mddefault}{\updefault}{\color[rgb]{0,0,0}$q$}%
}}}}
\put(4501,-3511){\makebox(0,0)[lb]{\smash{{\SetFigFont{12}{14.4}{\rmdefault}{\mddefault}{\updefault}{\color[rgb]{0,0,0}$r$}%
}}}}
\put(5251,-3511){\makebox(0,0)[lb]{\smash{{\SetFigFont{12}{14.4}{\rmdefault}{\mddefault}{\updefault}{\color[rgb]{0,0,0}$q$}%
}}}}
\put(6001,-3511){\makebox(0,0)[lb]{\smash{{\SetFigFont{12}{14.4}{\rmdefault}{\mddefault}{\updefault}{\color[rgb]{0,0,0}$r$}%
}}}}
\put(1126,-3886){\makebox(0,0)[lb]{\smash{{\SetFigFont{11}{13.2}{\rmdefault}{\mddefault}{\updefault}{\color[rgb]{0,0,0}(a)}%
}}}}
\put(2626,-3886){\makebox(0,0)[lb]{\smash{{\SetFigFont{11}{13.2}{\rmdefault}{\mddefault}{\updefault}{\color[rgb]{0,0,0}(b)}%
}}}}
\put(4126,-3886){\makebox(0,0)[lb]{\smash{{\SetFigFont{11}{13.2}{\rmdefault}{\mddefault}{\updefault}{\color[rgb]{0,0,0}(c)}%
}}}}
\put(5626,-3886){\makebox(0,0)[lb]{\smash{{\SetFigFont{11}{13.2}{\rmdefault}{\mddefault}{\updefault}{\color[rgb]{0,0,0}(d)}%
}}}}
\end{picture}%
	\caption{The four cases in the proof of Lemma~\ref{lemma:triangmany_fan}.}
	\label{fig:triangmany_cases}
\end{figure}

To prove that one of the four things occurs we argue as follows. It is well-known that in any triangulation of a $k$-gon one can ``contract a boundary edge'' to get a triangulation of a $(k-1)$-gon. Doing that in all the boundary edges of the seed triangulation $T_0$ except those incident to either $p$, $q$, $r$ or $s$ we get a triangulation $\widetilde{T_0}$ of a polygon $\widetilde P$ with at most eight vertices: the four vertices $p$, $q$, $r$ and $s$ and at most one extra vertex between each two of them. We embed $\widetilde P$ having 
as vertex a subset of the vertices of a regular octagon, with $pqrs$ forming a square.
We now look at the position of the center of the octagon $\widetilde P$ with respect to the triangulation  $\widetilde{T_0}$: If it lies in the interior of an edge, then this edge is a diameter of the octagon and we are in cases (a) or (b).
If it lies in the interior of a triangle of $\widetilde{T_0}$, then we are in cases (c) or (d). See Figure~\ref{fig:triangmany_cases} again.

Now we show explicitly the linear dependences involved in $T_1\cup T_2$ in each case. 
\begin{compactenum}[(a)]
\item  Suppose $T_0$ has a diagonal crossing $pq$ and $rs$. Then
\begin{equation}\label{eq:triangmany_a}
v_{pr} + v_{qs} = v_{pq} + v_{rs},
\end{equation}
because every diagonal of $T_0$ intersecting the two (respectively, one; respectively none) 
of $pr$ and $qs$ intersects also the two  (respectively, one; respectively none) of $pq$ and $rs$.

\item If $T_0$ contains the diagonal $pr$, let $a$ and $b$ be vertices joined to $pr$ in $T_0$, with $a$ on the side of $q$ and $b$ on the side of $s$. We define the following vectors $w_a$ and $w_b$:
\begin{compactitem}[$\circ$]
\item $w_a$ equals $0$, $v_{pq}$ or $v_{qr}$ depending on whether $a$ equals $q$, lies between $p$ and $q$, or lies between $q$ and $r$.
\item $w_b$ equals $0$, $v_{ps}$ or $v_{rs}$ depending on whether $a$ equals $s$, lies between $p$ and $s$, or lies between $s$ and $r$.
\end{compactitem}
We claim that in the nine cases we have the equality
\begin{equation}\label{eq:triangmany_b}
v_{pr} + v_{qs} = w_{a} + w_{b}.
\end{equation}
This is so because $v_{pr} + v_{qs}$ now equals the sum of the $\alpha_i$'s corresponding to the diagonals of $T_0\setminus\{pr\}$ crossing $qs$, and we have that:
\begin{compactitem}[$\circ$]
\item The diagonals of $T_0$ crossing $qs$ in the $q$-side of $pr$ are none, the same as those crossing $pq$, or the same as those crossing $qr$ in the three cases of the definition of $w_a$, and
\item The diagonals of $T_0$ crossing $qs$ in the $s$-side of $pr$ are none, the same as those crossing $ps$, or the same as those crossing $rs$ in the three cases of the definition of $w_b$
\end{compactitem}

\item  If $T_0$ contains a triangle $pbc$ with $bc$ crossing both $qr$ and $rs$ then we have the equality
\begin{equation}\label{eq:triangmany_c}
2v_{pr} + v_{qs} = v_{qr} + v_{rs},
\end{equation}
because in this case the diagonals of $T_0$ crossing $pr$ are those crossing both $qr$ and $rs$, while the ones crossing $qs$ are those crossing one, but not both, of  $qr$ and $rs$.

\item If $T_0$ contains a triangle $pqc$ with $rs$ crossing both $pc$ and $qc$ then we have the equality
\begin{equation}\label{eq:triangmany_d}
v_{pr}+ v_{qs}= v_{rs}
\end{equation}
because the diagonals of $T_0$ crossing $rs$ are the disjoint union of those crossing $pr$ and those crossing $qs$.
\end{compactenum}
\vskip-5mm
\end{proof}

Observe that when $T_0$ is a snake triangulation (the CFZ case) or, more generally, when the dual tree of $T_0$ is a path, cases (c) and (d) do not occur.

\subsection{Proof of Theorem~\ref{thm:triangmany_regular}}

Once we know ${\mathcal F}_{T_0}$ is a complete simplicial fan, its being the normal fan of a simple polytope can be expressed as the feasibility of a system of linear inequalities. This can be done in several ways (compare, e.g., Theorem~3.1 in~\cite{HoLaTh11}). We choose the following one,
related to the understanding of complete simplicial fans as \emph{triangulations of vector configurations}~(see~\cite[Sec.~9.5]{LoRaSa10}).

\begin{lemma}
\label{lemma:polytopal}
Let ${\mathcal F}$ be a complete simplicial fan in a real vector space $V$ and let $A$ be the set of generators of ${\mathcal F}$ (more precisely, $A$ has one generator of each ray of ${\mathcal F}$). Then the following conditions are equivalent:
\begin{compactenum}[\rm(1)]
\item ${\mathcal F}$ is the normal fan of a polytope.

\item There is a map $\omega: A\to \R_{>0}$ such that for every pair $(C_1, C_2)$ of maximal adjacent cones of ${\mathcal F}$ the following happens: Let $\lambda:A\to \R$ be the (unique, up to a scalar multiple) linear dependence with support in $C_1\cup C_2$, with its sign chosen so that $\lambda$ is positive in the generators of $C_1\setminus C_2$ and $C_2\setminus C_1$. Then the scalar product $\lambda\cdot \omega = \sum_v \lambda(v) \omega(v)$ is strictly positive.
\end{compactenum}
\end{lemma}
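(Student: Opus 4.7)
The approach is to encode any candidate polytope as a piecewise linear function on $V$ (its support function) and show that condition (2) is precisely the criterion for this function to be (strictly) convex. Given $\omega\colon A \to \R_{>0}$, use the simpliciality of each maximal cone $C$ of $\mathcal{F}$ to define a unique point $x_C$ in the dual space $V^*$ by the equations $\langle v,x_C\rangle = \omega(v)$ for all generators $v$ of $C$. These assemble into a continuous piecewise linear function $g_\omega\colon V\to\R$ defined by $g_\omega(y):=\langle y,x_C\rangle$ when $y\in C$; it is well-defined because across a wall $W=C_1\cap C_2$ the vectors $x_{C_1}$ and $x_{C_2}$ agree on every generator of $W$, hence on $W$.

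The heart of the proof is a single computation identifying the wall-crossing condition in (2) with strict convexity of $g_\omega$ across each wall. Fix adjacent maximal cones $C_1,C_2$ sharing a wall $W$, let $v_i$ be the generator of $C_i$ not in $W$, and let $\lambda$ be the wall-crossing dependence as in the statement of (2). Pairing the relation $\sum_v \lambda(v)\,v=0$ with $x_{C_1}$ and using $\langle v,x_{C_1}\rangle=\omega(v)$ for $v\in C_1$ yields
\[
\lambda(v_2)\,\langle v_2,x_{C_1}\rangle \;=\; -\!\!\sum_{v\in C_1}\!\lambda(v)\,\omega(v) \;=\; \lambda(v_2)\,\omega(v_2)-\lambda\cdot\omega,
\]
so $\langle v_2,x_{C_1}\rangle<\omega(v_2)=\langle v_2,x_{C_2}\rangle$ holds if and only if $\lambda\cdot\omega>0$. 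Because $g_\omega$ is linear on each $C_i$ and continuous across $W$, this inequality says exactly that $g_\omega$ bends strictly upward when one crosses from $C_1$ to $C_2$.

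Invoking the classical fact that a continuous piecewise linear function on a complete fan is (strictly) convex if and only if it is (strictly) convex across each interior wall, one concludes: $\omega$ satisfies (2) if and only if $g_\omega$ is strictly convex on $V$, which is in turn equivalent to $g_\omega$ being the support function of a polytope $P\subset V^*$ whose domains of linearity are exactly the maximal cones of $\mathcal{F}$; that is, the normal fan of $P$ equals $\mathcal{F}$. This gives (2) $\Rightarrow$ (1). Conversely, if $\mathcal{F}$ is the normal fan of some polytope $P$, translate $P$ so that the origin lies in its interior; then the facet right-hand sides $\omega(v):=h_P(v)$ are all positive, the support function $h_P$ coincides with $g_\omega$ and is convex, so $\lambda\cdot\omega>0$ for every wall (strictness because distinct adjacent maximal cones of the normal fan correspond to distinct vertices of $P$).

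The main obstacle is the local-to-global step: verifying that strict convexity of $g_\omega$ across every single wall of $\mathcal{F}$ implies its global strict convexity on all of $V$. This step uses completeness and simpliciality of $\mathcal{F}$ in an essential way; it is classical and appears, in the closely related language of triangulations of vector configurations, in~\cite[Sec.~9.5]{LoRaSa10}. Once global convexity is secured, standard properties of support functions deliver the desired polytope with the correct normal fan.
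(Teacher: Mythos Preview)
Your proof is correct and complete; the wall-crossing computation and the appeal to the local-to-global convexity principle are both sound. However, the route differs from the paper's. You work in the dual: for each maximal cone $C$ you solve for the vertex $x_C\in V^*$ and show that the piecewise linear support function $g_\omega$ is strictly convex across every wall precisely when condition~(2) holds, then invoke the standard equivalence between strictly convex piecewise linear functions on a complete fan and normal fans of simple polytopes. The paper instead works in the primal: it rescales each generator to $v/\omega(v)$, argues that condition~(2) makes the convex hull of these rescaled points a simplicial polytope with $\mathcal F$ as its \emph{central} fan (the key step being that the affine dependence among $\{O\}\cup\sigma_1\cup\sigma_2$ has coefficient $-\lambda\cdot\omega$ at the origin), and then takes the polar to obtain the desired simple polytope. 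Your approach has the advantage of producing the vertices $x_C$ of the polytope directly, without passing through polarity, and is closer to the toric-geometry viewpoint; the paper's approach makes the link to regular triangulations of vector configurations (which it cites from~\cite{LoRaSa10}) more transparent and yields the polar polytope explicitly as a convex hull. Both defer the same local-to-global step to the literature.
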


\begin{proof}
One short proof of the lemma is that both conditions are equivalent to ``${\mathcal F}$ is a regular triangulation of the vector configuration $A$''~(See, e.~g.,~Corollary 9.5.3~\cite{LoRaSa10}). But let us show a more explicit proof of the implication from (2) to (1), which is the one we need. What we are going to show is that if such an $\omega$ exists and if we consider the 
set of points
\[
\widetilde A:=\big\{\tfrac{v}{\omega(v)}: v\in A\big\},
\]
then the convex hull of $\widetilde A$ is a simplicial polytope having  ${\mathcal F}$ as its \emph{central fan}. (We think of $\widetilde A$ as points in an affine space, rather than as vectors in a vector space.) Hence  ${\mathcal F}$ is the normal fan of the polar of $\conv(\widetilde A)$ (see, e.~g., \cite[Sec.~7.1]{Zi}).

To show the claim on $\conv(\widetilde A)$ we argue as follows. Consider the simplicial complex $\Delta$ with vertex set $\widetilde A$ obtained by embedding the face lattice of ${\mathcal F}$ in it. That is, for each cone~$C$ of ${\mathcal F}$ we consider the simplex with vertex set in $\widetilde A$ corresponding to the generators of~$C$. Since ${\mathcal F}$ is a complete fan and since the elements of $\widetilde A$ are generators for its rays (they are positive scalings of the elements of $A$), $\Delta$ is the boundary of a star-shaped polyhedron with the origin in its kernel. The only thing left to be shown is that this polyhedron is strictly convex, that is, that for any two adjacent maximal simplices $\sigma_1$ and $\sigma_2$ the origin lies in the same side of $\sigma_1$ as $\sigma_2\setminus \sigma_1$. Equivalently, if we understand (the vertices of) $\sigma_1$ and $\sigma_2$ as subsets of $\widetilde A$,
we have to show that the unique affine dependence between the points $\{O\}\cup\sigma_1\cup\sigma_2$ has opposite sign in $O$ than in $\sigma_1\setminus \sigma_2$ and~$\sigma_2\setminus \sigma_1$.
The proof of this  is easy. The coefficients in the \emph{linear} dependence among the \emph{vectors} in~$\sigma_1\cup \sigma_2$ are the vector
\[
(\lambda(v) \omega(v))_{ v \in A}.
\] 
To turn this into an \emph{affine} dependence of points involving the origin we simply need to give the origin the coefficient $-\sum_v\lambda(v) \omega(v)$ which is, by hypothesis, negative.
\end{proof}

So, in the light of Lemma~\ref{lemma:polytopal}, to prove Theorem~\ref{thm:triangmany_regular} we simply need to choose weights $\omega_{ij}$ for the diagonals of the polygon with the property that, for each of the linear dependences exhibited in equations~(\ref{eq:triangmany_a}),~(\ref{eq:triangmany_b}),~(\ref{eq:triangmany_c}), and~(\ref{eq:triangmany_d}), the equation
$
\sum_{ij} \omega_{ij} \lambda_{ij} > 0$ holds.

As a first approximation, let $\omega_{ij}=2$ if $ij$ is in $T_0$ and $\omega_{ij}=1$ otherwise. This is good enough for equations~(\ref{eq:triangmany_c}) 
and~(\ref{eq:triangmany_d}) in which all the $\omega$'s in the dependence are $1$ and the sum of the coefficients in the left-hand side is greater than in the right-hand side. It also works for equations~(\ref{eq:triangmany_b}), in which we have
\[
\omega_{pr}=2,\qquad \omega_{qs}=1,\qquad \lambda_{pr}=1,\qquad\lambda_{qs}=1,
\]
so that the sum $\sum_{ij} \omega_{ij} \lambda_{ij}$ for the left-hand side is three, while that of the right-hand side can be $0$, $-1$ or $-2$ depending on the cases for the points $a$ and $b$.

The only (weak) failure is that in equation~(\ref{eq:triangmany_a}) we have 
\[
\lambda_{pr}=1,\qquad \lambda_{qs}=1,\qquad \lambda_{pq}=-1,\qquad\lambda_{rs}=-1
\]
and all the $\omega$'s are $1$, so we get $\sum_{ij} \omega_{ij} \lambda_{ij} = 0$. We solve this by slightly perturbing the~ $\omega$'s. A slight perturbation will not change the correct signs we got for equations~(\ref{eq:triangmany_b}),~(\ref{eq:triangmany_c}), and~(\ref{eq:triangmany_d}). For example, for each $ij$ not in $T_0$ change $\omega_{ij}$ to
\[
\omega_{ij} = 1 + \varepsilon g_{ij}
\]
for a sufficiently small $\varepsilon>0$ and for a vector $(g_{ij})_{ij}$ satisfying
\[
g_{ik}+g_{jl} > \max\{g_{ij}+g_{kl}, g_{il}+g_{jk}\}  \quad \text{for all }i,j,k,l,\ 1\le i<j<k<l\le n+3.
\]
This holds (for example) for $g_{ij}:=(j-i)(n+3+i-j)$.

\subsection{Distinct seed triangulations produce distinct realizations}
Let $\AssII(T)$ denote the $n$-dimensional associahedron obtained with the construction of the previous section starting with a certain triangulation $T$. (This is a slight abuse of notation, since the associahedron depends also in the weight vector $\omega$ that gives the right-hand sides for its inequality definition. Put differently, by 
$\AssII(T)$ we here denote the normal fan rather than the associahedron itself.)
We want to classify the associahedra $\AssII(T)$ by normal isomorphism.

In principle, it looks like we have as many associahedra as there are triangulations (that is, Catalan-many) but that is not the case because, clearly, changing $T$ by a rotation or a reflection does not change the associahedron obtained. The question is whether this is the only operation that preserves $\AssII(T)$, modulo normal isomorphism. To answer this, we look at parallel facets. 

\begin{proposition}
\label{prop:triangmany_parallel}
$\AssII(T_0)$ has exactly $n$ pairs of parallel facets, each pair consisting of (the facet of) one diagonal in $T_0$ and the diagonal obtained from it by a flip in $T_0$.
\end{proposition}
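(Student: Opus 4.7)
The plan is to compare the facet normals $v_{pq}$ pairwise using their expansions in the basis $\{\alpha_1,\dots,\alpha_n\}$. Since $\AssII(T_0)$ is a bounded polytope, two of its facets are parallel if and only if their outer normals are negative scalar multiples of each other. The key observation is that in the basis $\{\alpha_1,\dots,\alpha_n\}$ one has
\[
v_{pq}=\begin{cases}-\alpha_i & \text{if } pq=\delta_i\in T_0,\\ \sum_{k:\,\delta_k\text{ crosses }pq}\alpha_k & \text{if } pq\notin T_0,\end{cases}
\]
with coefficients in $\{-1,0,1\}$ in the first case and in $\{0,1\}$ in the second.

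I then split into three cases according to how many of the two diagonals $pq,p'q'$ lie in $T_0$. If both are in $T_0$, then $v_{pq}=-\alpha_i$ and $v_{p'q'}=-\alpha_j$ are negatively proportional only if they are equal, forcing $pq=p'q'$. If neither is in $T_0$, then both normals are nonnegative combinations of the $\alpha_k$'s; they are also nonzero, because any diagonal outside $T_0$ must cross at least one diagonal of $T_0$ (since $T_0$ is a maximal noncrossing set of diagonals). Hence in this case no nontrivial negative proportionality is possible. The only remaining case is $pq=\delta_i\in T_0$ and $p'q'\notin T_0$; then the equation $-\alpha_i=c\sum_k\alpha_k$ with $c<0$ forces $\sum_k\alpha_k=\alpha_i$, i.e., $p'q'$ crosses $\delta_i$ and no other diagonal of $T_0$.

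Finally, I identify the partner in the last case as the flip of $\delta_i$ in $T_0$: the two triangles of $T_0$ containing $\delta_i$ form a quadrilateral, whose other diagonal is the unique diagonal of the $(n+3)$-gon that crosses $\delta_i$ and no other element of $T_0$. This flip always exists, is a genuine diagonal (not a polygon edge), and two distinct seed diagonals $\delta_i\neq \delta_j$ produce distinct flips (since the flip of $\delta_i$ crosses $\delta_i$ by definition). Hence each $\delta_i$ is paired with exactly one other diagonal, giving exactly $n$ parallel pairs of facets of $\AssII(T_0)$, as claimed. No step here is expected to present a serious obstacle: everything follows directly from the explicit $\{0,\pm 1\}$-structure of the normal vectors in the $\{\alpha_i\}$-basis.
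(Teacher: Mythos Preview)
Your argument is correct and is essentially the same as the paper's, just carried out in more detail. The paper's proof is a two-line version of exactly your observation: all normals $v_{pq}$ with $pq\notin T_0$ lie in the (open) positive orthant with respect to the basis $\{\alpha_1,\dots,\alpha_n\}$, so any pair of opposite (negatively proportional) normals must involve some $-\alpha_i$ coming from $T_0$, whose unique partner is the diagonal crossing only $\delta_i$, i.e., the flip of $\delta_i$. Your case analysis and your explicit identification of the flip as the unique diagonal crossing $\delta_i$ alone simply make these implicit steps explicit.
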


\begin{proof}
$\AssII(T)$ is full-dimensional, so two facets are parallel only if their defining normals are opposite. Since all normals except the ones from the seed triangulation $T_0$ lie in the positive orthant, in every pair of opposite normals one of them comes from the seed triangulation. This is the case of the statement.
\end{proof}

\begin{lemma}
\label{lemma:triangmany_distinct}
Let $Q$ be an $(n+3)$-gon, with $n\ge 2$. For each triangulation $T$ of $Q$ let $B_T$ denote the set consisting of the $n$ diagonals in $T$ plus the $n$ diagonals that can be introduced by a single flip from $T$.
Then for every $T_1\ne T_2$ we have $B_{T_1}\ne B_{T_2}$. 
\end{lemma}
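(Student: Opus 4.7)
The plan is to recover the triangulation $T$ from $B_T$ by the purely combinatorial recipe
\[
T \;=\; \bigl\{\delta \in B_T : \delta \text{ is crossed by exactly one other element of } B_T\bigr\}.
\]
Once this is established, the conclusion $B_{T_1}=B_{T_2}\Rightarrow T_1=T_2$ is immediate.

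The easy inclusion is that every $\delta\in T$ is crossed by exactly one element of $B_T$, namely its flip $\tilde\delta$. Indeed, the elements of $T$ are pairwise non-crossing, and for any other $\delta'\in T$ the flip $\tilde{\delta'}$ lies together with $\delta$ in the triangulation $(T\setminus\{\delta'\})\cup\{\tilde{\delta'}\}$, so $\tilde{\delta'}$ does not cross $\delta$. Thus the only diagonal of $B_T$ crossing $\delta$ is $\tilde\delta$.

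The main step---and the only place where the hypothesis $n\ge 2$ is used---is the reverse inclusion: every flipped diagonal $\tilde\delta\in B_T\setminus T$ is crossed by at least two elements of $B_T$. Let $Q=q_1q_2q_3q_4$, in cyclic polygon order, be the quadrilateral formed by the two triangles of $T$ adjacent to $\delta$, so that $\delta=q_1q_3$ and $\tilde\delta=q_2q_4$. Since $n\ge 2$, the polygon $P_{n+3}$ has more than four vertices, so at least one side of $Q$ is not a boundary edge of $P_{n+3}$ and is therefore a diagonal of $P_{n+3}$ belonging to $T$; by the symmetries of $Q$ fixing $\delta$ we may assume this side is $\delta':=q_2q_3$. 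Let $q_2q_3q'$ be the other triangle of $T$ incident to $\delta'$; then in cyclic polygon order the relevant five vertices appear as $q_1,q_2,q',q_3,q_4$, and flipping $\delta'$ in $T$ yields $\tilde{\delta'}=q_1q'$. A short inspection of the cyclic order shows that $q_1q'$ and $q_2q_4$ interleave, hence cross. Thus $\tilde\delta$ is crossed inside $B_T$ by both $\delta$ and $\tilde{\delta'}$, giving the required second crossing.

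The only delicate point will be this final cyclic-order interleaving check, and it is exactly where the assumption $n\ge 2$ bites: without it, $Q$ could be the whole polygon $P_4$, no witness diagonal $\delta'\in T$ would be available, and indeed for $n=1$ the pentagonal flipped diagonal has nothing else in $B_T$ to cross, so the recipe would collapse.
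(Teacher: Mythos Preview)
Your proof is correct and takes a genuinely different route from the paper's. You recover $T$ from $B_T$ by an explicit combinatorial formula: $T$ is exactly the set of diagonals in $B_T$ crossed by a \emph{single} other member of $B_T$. This immediately yields injectivity of $T\mapsto B_T$. The verification is clean; the only case analysis is the ``WLOG $\delta'=q_2q_3$'' step, and that reduction is legitimate since the four sides of $Q$ form one orbit under the relabellings fixing $\{\delta,\tilde\delta\}$.

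The paper argues differently. It assumes $T_2\subset B_{T_1}$ with $T_2\ne T_1$ and first shows that $T_2$ must be obtained from $T_1$ by a set of \emph{parallel flips} (diagonals flipped simultaneously, no two sharing a triangle). It then observes that in such a $T_2$ one can always flip a diagonal to produce something crossing two diagonals of $T_1$, hence lying outside $B_{T_1}$; so $B_{T_2}\ne B_{T_1}$. Your argument is more self-contained and yields the stronger statement that $T$ is \emph{determined} by $B_T$ via an explicit rule, whereas the paper's argument gives a bit more structural information about how two triangulations with $T_2\subset B_{T_1}$ must be related.

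One small slip in your closing remark: for $n=1$ the polygon $P_{n+3}$ is a \emph{quadrilateral}, not a pentagon. Your point stands---in $P_4$ both triangulations give the same $B_T$---but the word ``pentagonal'' should be deleted.
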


\begin{proof}
Suppose that $T_1$ and $T_2$ had $B_{T_1}= B_{T_2}$. We claim that $T_2$ is obtained from $T_1$ by a set of ``parallel flips''. That is, by choosing a certain subset of diagonals 
of $T_1$ such that no two of them are incident to the same triangle and flipping them simultaneously. This is so because every diagonal $pr$ in $T_2$ but not in $T_1$ intersects a single 
diagonal $qs$ of $T_1$. If $pqr$ and $prs$ were not triangles in $T_2$, then let $a$ be a vertex joined to $pr$ in $T_2$, different from $q$ or $s$. One of  $pa$ and $ra$ intersects the diagonal $qs$ of $T_1$ and one of the edges $pq$, $qr$, $rs$ and $pr$ of $T_1$.

Once we have proved this for $T_2$, the statement is obvious. For every $T_2$ different from $T_1$ but with all its diagonals in $B_{T_1}$ there is a diagonal that we can flip to get one that is not in $B_{T_1}$ (same argument, let $pr$ be a diagonal in $T_2$ but not in $T_1$; let $pq$, $qr$, $rs$ and $pr$ be the other sides of the two triangles of $T_2$ containing $pq$. Flipping any of them, say $pq$, gives a diagonal that crosses $pq$ and $qs$, which are both in $T_1$).
\end{proof}

\begin{corollary}
\label{corollary:triangmany_distinct}
Let $T_1$ and $T_2$ be two triangulations of a convex $(n+3)$-gon. 
Then $\AssII(T_1)$ and $\AssII(T_2)$ 
are normally isomorphic if and only if 
$T_1$ and $T_2$ are equivalent under rotation-reflection.
\end{corollary}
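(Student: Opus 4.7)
The plan is to combine the three previously-established tools: the rigidity of associahedron automorphisms (Lemma~\ref{lemma:automorphism_rotation-reflection}), the explicit description of the parallel-facet pairs of $\AssII(T)$ (Proposition~\ref{prop:triangmany_parallel}), and the fact that the ``flip-neighborhood'' $B_T$ determines $T$ (Lemma~\ref{lemma:triangmany_distinct}).

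\textbf{The ``if'' direction.} If $T_2$ arises from $T_1$ by a dihedral symmetry $\varphi$ of the $(n+3)$-gon, then $\varphi$ permutes all diagonals, hence the entire construction of Section~\ref{sec:triang-many} is carried by $\varphi$ onto the corresponding construction with seed $T_2$. In particular the fan ${\mathcal F}_{T_1}$ is mapped by the induced linear map on $V$ to ${\mathcal F}_{T_2}$, and so $\AssII(T_1)$ and $\AssII(T_2)$ are normally isomorphic. (Here I use only that $\varphi$ sends the seed diagonals to the seed diagonals and preserves the ``crossing'' relation which defines the vectors $v_{pq}$.)

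\textbf{The ``only if'' direction.} Assume that $\AssII(T_1)$ and $\AssII(T_2)$ are normally isomorphic. Any normal isomorphism induces an automorphism of the face lattice of the associahedron, so by Lemma~\ref{lemma:automorphism_rotation-reflection} it is induced by a dihedral symmetry $\varphi$ of the $(n+3)$-gon (for $n\ge 2$; the case $n=1$ is trivial since there is a unique triangulation). Under the bijection between facets of $\AssII(T_i)$ and diagonals of the polygon, $\varphi$ maps the set of diagonals to itself in a way that sends parallel pairs to parallel pairs. By Proposition~\ref{prop:triangmany_parallel}, the diagonals appearing in a parallel pair of $\AssII(T_i)$ are exactly those in $B_{T_i}$. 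Hence $\varphi(B_{T_1})=B_{T_2}$. Since $\varphi$ is a symmetry of the polygon, $\varphi(B_{T_1})=B_{\varphi(T_1)}$, so $B_{\varphi(T_1)}=B_{T_2}$. Lemma~\ref{lemma:triangmany_distinct} then forces $\varphi(T_1)=T_2$, i.e. $T_1$ and $T_2$ are equivalent under rotation-reflection.

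\textbf{Main obstacle.} The only delicate point is the reduction from ``normal isomorphism of fans'' to ``combinatorial automorphism induced by a polygon symmetry.'' This is already packaged by Lemma~\ref{lemma:automorphism_rotation-reflection}, together with the key preservation property that a \emph{linear} isomorphism of fans sends parallel rays to parallel rays (so pairs of parallel facets go to pairs of parallel facets). Once these two observations are available, the rest is a short matching of data: Proposition~\ref{prop:triangmany_parallel} identifies the parallel-facet data with $B_T$, and Lemma~\ref{lemma:triangmany_distinct} guarantees that this data determines $T$ uniquely.
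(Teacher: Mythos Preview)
Your proof is correct and follows essentially the same approach as the paper's: both directions use Lemma~\ref{lemma:automorphism_rotation-reflection} to reduce the normal isomorphism to a dihedral symmetry $\varphi$, then invoke Proposition~\ref{prop:triangmany_parallel} and Lemma~\ref{lemma:triangmany_distinct} to conclude $\varphi(T_1)=T_2$ from $\varphi(B_{T_1})=B_{T_2}$. Your write-up is slightly more explicit than the paper's (in particular, spelling out the intermediate equality $\varphi(B_{T_1})=B_{\varphi(T_1)}$ and noting the trivial case $n\le 1$), but there is no substantive difference in strategy.
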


\begin{proof}
If $T_1$ and $T_2$ are equivalent under rotation-reflection then 
the resulting associahedra are clearly the same.
Now suppose that $\AssII (T_1)$ and $\AssII (T_2)$ are normally isomorphic. 
By Lemma \ref{lemma:automorphism_rotation-reflection} the automorphism of the associahedron face lattice induced by the isomorphism corresponds to
a rotation-reflection of the polygon. Now, normal isomorphism preserves the property of a pair of facets being parallel, so this rotation-reflection sends~$B_{T_1}$ to~$B_{T_2}$, and thus~$T_1$ to~$T_2$.
\end{proof}

However, the same is not true if we only look at the set of normal vectors of  $\AssII(T)$:

\begin{proposition}
\label{prop:dualtree}
Let $T_1$ and $T_2$ be two triangulations of the $(n+3$)-gon. Let $A(T_1)$ and $A(T_2)$ be the sets of normal vectors of $\AssII(T_1)$ and $\AssII(T_2)$. Then $A(T_1)$ and $A(T_2)$ are linearly equivalent if, and only if, $T_1$ and $T_2$ have isomorphic dual trees.
\end{proposition}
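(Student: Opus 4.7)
The plan is to read the dual tree $\mathcal T(T)$ off from the set $A(T)$ of normal vectors in a linear--equivalence--invariant manner. First I would record the explicit description of $A(T)$ in the basis $\alpha_1,\dots,\alpha_n$: the $n$ diagonals lying in $T$ contribute the ``negative basis vectors'' $-\alpha_i$, while each of the remaining $\tfrac{n(n+1)}{2}$ diagonals of the polygon crosses exactly the diagonals of $T$ dual to the edges of a unique positive--length path in $\mathcal T(T)$, and so contributes a $\{0,1\}$--path vector $\sum_{i\in E(P)}\alpha_i$. Thus $A(T)$ splits canonically into $n$ vectors with a single negative coordinate and $\tfrac{n(n+1)}{2}$ vectors with non-negative coordinates. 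With this in hand, the forward direction is immediate: any tree isomorphism $\sigma\colon\mathcal T(T_1)\to\mathcal T(T_2)$ induces an edge bijection $\pi$ that preserves (edge sets of) paths, so the linear map $\phi$ defined by $\phi(\alpha_i):=\alpha'_{\pi(i)}$ carries $A(T_1)$ bijectively onto $A(T_2)$.

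For the converse, suppose $\phi\colon V\to V$ is a linear isomorphism with $\phi(A(T_1))=A(T_2)$. The first step is to identify the antiparallel pairs. Two vectors $v,-v\in A(T)$ both belong to $A(T)$ only when $v=\pm\alpha_i$: indeed, path vectors have all coordinates non-negative, so any antiparallel partner of such a vector must itself have at least one negative coordinate, and the only vectors in $A(T)$ with negative coordinates are the $-\alpha_j$'s, which have just a single nonzero coordinate. Thus the antiparallel pairs in $A(T)$ are exactly $\{(\alpha_i,-\alpha_i):i\in[n]\}$, and $\phi$ must permute them; this gives a bijection $\pi\colon[n]\to[n]$ and signs $\epsilon_i\in\{\pm1\}$ with $\phi(\alpha_i)=\epsilon_i\alpha'_{\pi(i)}$.

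To fix the signs, I would look at length-two path vectors. Whenever $i,j$ are adjacent edges of $\mathcal T(T_1)$ (i.e.\ share a triangle of $T_1$), the vector $\alpha_i+\alpha_j$ lies in $A(T_1)$, so $\epsilon_i\alpha'_{\pi(i)}+\epsilon_j\alpha'_{\pi(j)}\in A(T_2)$. But the only vectors in $A(T_2)$ whose support has size exactly two are the length-two path vectors $\alpha'_k+\alpha'_l$ for $\{k,l\}$ adjacent in $\mathcal T(T_2)$, and in particular both signs there are positive. This forces $\epsilon_i=\epsilon_j=+1$ and $\{\pi(i),\pi(j)\}$ adjacent in $\mathcal T(T_2)$. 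For $n\ge 2$ every edge of the connected tree $\mathcal T(T_1)$ has some neighbour, so $\phi(\alpha_i)=\alpha'_{\pi(i)}$ for all $i$, and $\pi$ sends adjacent pairs of edges of $\mathcal T(T_1)$ to adjacent pairs of edges of $\mathcal T(T_2)$.

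Equivalently, $\pi$ is a graph isomorphism between the line graphs $L(\mathcal T(T_1))$ and $L(\mathcal T(T_2))$. The step I expect to require an outside input is passing from a line-graph isomorphism back to an honest tree isomorphism; for this I would invoke the classical theorem of Whitney, which asserts that two connected simple graphs with isomorphic line graphs are themselves isomorphic, with the single exception of the pair $\{K_3,K_{1,3}\}$. Since neither $\mathcal T(T_1)$ nor $\mathcal T(T_2)$ is a cycle, Whitney's exception does not apply and the sought isomorphism $\mathcal T(T_1)\cong\mathcal T(T_2)$ follows. The edge case $n\le 1$ is trivial, because up to isomorphism there is a unique tree with $\le 2$ vertices.
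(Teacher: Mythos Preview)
Your proof is correct and follows the same overall structure as the paper's: express $A(T)$ as $\{-\alpha_i\}\cup\{\text{path vectors of }\mathcal T(T)\}$, deduce the forward direction from this description, and for the converse recover the tree from the path combinatorics. The differences are in emphasis and in the final reconstruction step.

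You are considerably more careful than the paper in one place: you actually argue that any linear isomorphism $\phi$ with $\phi(A(T_1))=A(T_2)$ must have the form $\alpha_i\mapsto\alpha'_{\pi(i)}$, via the antiparallel-pair and sign arguments. The paper essentially takes this for granted, jumping directly to ``there is a bijection between edges sending paths to paths.'' (One tiny omission in your write-up: to conclude that $\pi$ is a line-graph \emph{isomorphism} you also need that non-adjacent edge pairs go to non-adjacent pairs; this follows immediately by running the same argument with $\phi^{-1}$.)

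Where you and the paper genuinely diverge is in how the tree is rebuilt from edge-adjacency. You pass to the line graph and invoke Whitney's theorem. The paper avoids this external input with a one-line direct argument: in a tree, a maximal set of pairwise-adjacent edges is exactly the star of a vertex (there are no triangles), so from the path data one reads off the vertex set and the incidences directly. Your route is perfectly valid, but the paper's is more self-contained; you might consider replacing the appeal to Whitney with this elementary observation.
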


\begin{proof}
Let $\mathcal T$ be the dual tree of a triangulation $T$. Observe that the edges of $\mathcal T$ correspond bijectively to the inner diagonals in $T$. Moreover, the diagonals of the polygon not used in $T$ correspond bijectively to the possible paths in $\mathcal T$. 
More precisely: for every pair of nodes of $\mathcal T$ (that is, triangles $t_1$ and $t_2$ of $T$) let $p$ (resp. $q$)
be the vertex of $t_1$ (resp. of $t_2$) not visible from $t_2$ (resp. from $t_1$). Then the diagonals of $T$ crossed by $pq$ correspond to the path in $\mathcal T$ joining $t_1$ to $t_2$. 

This means that, if we label the edges of ${\mathcal T}$ with the numbers $1$ through $n$ in the same manner as we labelled the diagonals of $T$ we have that
\[
A(T) = \{-\alpha_i:i\in [n]\} \cup \{\sum_{i\in p} \alpha_i : p \text{ is a path in } {\mathcal T} \}.
\]
So, $A(T)$ can be recovered knowing only $\mathcal T$ as an abstract graph. For the converse, observe that if two trees are not isomorphic then there is no bijection between their edges that sends paths to paths. For example, knowing only the sets of edges that form paths we can identify the (stars of) vertices of the tree as the sets of edges such that every two of them form a path.
\end{proof}

\subsection{Path triangulations, $c$-cluster complexes, and denominator fans in type $A_n$}
\label{sec:combinatorial_c-cluster_complex}

\subsubsection{Associahedra from path triangulations}
Let us call a triangulation of $P_{n+3}$ whose dual tree is a path a \emph{path triangulation}. 
By Proposition~\ref{prop:dualtree}, for a path triangulation $T$ the set of normal vectors to the facets of $\AssII(T)$ equals the almost positive roots in the root system $A_n$, exactly as in the Chapoton--Fomin--Zelevinsky associahedron.  However, these associahedra are not normally equivalent to one another. To analyze this, we encode each path triangulation 
of the $(n+3)$-gon as a sequence of signs $c\in \{+,-\}^{n-1}$, as follows: the coordinates of $c$ correspond to the $n-1$ triangles 
that are not \emph{ears} (that is, are not leaves in the dual path) and we make it a $+$ or a $-$ depending on whether the dual path turns right or left at that vertex (see Figure~\ref{fig:T_c}). We denote $T_c$ the triangulation obtained in this way, for each sequence $c$. 

\begin{figure}[ht]
\begin{center}
\begin{tikzpicture}[scale=1]
\newdimen\L
\L=5cm
\newdimen\K
\K=1.5cm

\def\m{8}

\pgfmathsetmacro{\dm}{360/ \m}
\pgfmathsetmacro{\twicedm}{2*\dm}

\newcommand{\mgon}[3]{

\foreach \x in {0,\dm,...,360} {
     \draw[fill, yshift =#2,xshift = #1,rotate = #3 ] (\x:\K) 
     ;
     
} ;

\draw[ yshift = #2,xshift = #1,rotate = #3] (0:\K)
\foreach \x in {\dm,\twicedm,...,360} {
      -- (\x:\K)
 } -- cycle (90:\K)  ;

}

\newcommand{\diagone}[3]{ 
\draw[ yshift = #2, xshift = #1, rotate = #3] (0:\K)--(2*\dm:\K);
}

\newcommand{\diagtwo}[3]{
\draw[ yshift = #2, xshift = #1, rotate = #3] (0:\K)--(3*\dm:\K);
}

\newcommand{\diagthree}[3]{
\draw[ yshift = #2, xshift = #1, rotate = #3] (0:\K)--(4*\dm:\K);
}

\newcommand{\diagfour}[3]{
\draw[ yshift = #2, xshift = #1, rotate = #3] (0:\K)--(5*\dm:\K);
}

\mgon{0}{0cm}{0};
\diagone{0}{0cm}{3*\dm}
\diagtwo{0}{0cm}{3*\dm}
\diagthree{0}{0cm}{2*\dm}
\diagtwo{0}{0cm}{-2*\dm}
\diagone{0}{0cm}{-1*\dm}

\draw[fill] (-0.82*\K,0) circle(0.05) --(-0.5*\K,-0.4*\K)  circle (0.05)  node[anchor=north]{$-$};
\draw[fill] (-0.5*\K,-0.4*\K)--(-0.2*\K,0.1*\K)  circle (0.05)  node[anchor=south]{$+$};
\draw[fill] (-0.2*\K,0.1*\K)--(0.2*\K,0.1*\K)  circle (0.05)  node[anchor=south]{$+$};
\draw[fill] (0.2*\K,0.1*\K)--(0.5*\K,-0.4*\K)  circle (0.05)  node[anchor=north]{$-$};
\draw[fill] (0.5*\K,-0.4*\K)--(0.82*\K,0) circle(0.05);

\draw (0,-1.4*\K)  node{$c=\{ - ,+,+,- \}$};
\draw (\L,-1.4*\K) node{$T_c$};

\mgon{\L}{0cm}{0};
\diagone{\L}{0cm}{3*\dm}
\diagtwo{\L}{0cm}{3*\dm}
\diagthree{\L}{0cm}{2*\dm}
\diagtwo{\L}{0cm}{-2*\dm}
\diagone{\L}{0cm}{-1*\dm}

\draw[xshift = \L] (-0.82*\K,0) node{$\delta_1$};
\draw[xshift = \L] (-0.45*\K,-0.3*\K) node{$\delta_2$};
\draw[xshift = \L] (-0.15*\K,0.2*\K) node{$\delta_3$};
\draw[xshift = \L] (0.3*\K,0.2*\K) node{$\delta_4$};
\draw[xshift = \L] (0.55*\K,-0.3*\K) node{$\delta_5$};

\end{tikzpicture}
\end{center}
\caption{The triangulation $T_c$ corresponding to the sequence of signs $c=\{-,+,+,-\}$.}
\label{fig:T_c}
\end{figure}

It is clear that every path triangulation can be encoded in this way and that $T_{c_1}$ and $T_{c_2}$ 
are the same (modulo symmetries of the $(n+3)$-gon) if and only if $c_1$ and $c_2$ are the same modulo reflection and reversal. In particular, this gives us exponentially many realizations of the $n$-associahedron with the same set of facet normals as the Chapoton--Fomin--Zelevinsky associahedron:

\begin{corollary}
\label{cor:path_triangulation_associahedra}
Let $T_0$ be a triangulation whose dual tree is a path. Let its diagonals be numbered from $1$ to $n$ in the order they appear in the path. Then,
\begin{compactenum}[\rm(i)]
\item taking $\alpha_i=e_{i+1}-e_i$, we have that 
the set of normal vectors to the facets of $\AssII(T_0)$
is the set of almost positive roots in the root system $A_n$.
\item The number of normally non-isomorphic classes of associahedra obtained in this way is equal to the number of sequences $\{+,-\}^{n-1}$ modulo reflection and reversal.
\end{compactenum}
\end{corollary}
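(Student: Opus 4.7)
My plan is to prove the two parts separately, leveraging results already established in the paper.

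For part (i), I would apply Proposition~\ref{prop:dualtree} directly. When the dual tree of $T_0$ is a path with $n$ edges numbered $1,\ldots,n$ in the order they appear, the subpaths of this graph are exactly the consecutive intervals $\{i,i+1,\ldots,j\}$ for $1\le i\le j\le n$. Hence
\[
A(T_0)=\{-\alpha_i:i\in[n]\}\cup\Big\{\alpha_i+\alpha_{i+1}+\cdots+\alpha_j:1\le i\le j\le n\Big\}.
\]
Substituting $\alpha_i=e_{i+1}-e_i$ and telescoping gives $\sum_{k=i}^j\alpha_k=e_{j+1}-e_i$, while $-\alpha_i=e_i-e_{i+1}$. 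These are exactly the negative simple roots $-\Pi$ together with the positive roots $\Phi_{>0}$ of $A_n$, that is, the set $\Phi_{\ge-1}$ of almost positive roots.

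For part (ii), my plan is to combine Corollary~\ref{corollary:triangmany_distinct} with a combinatorial analysis of the dihedral action on path triangulations via their sign encodings. By Corollary~\ref{corollary:triangmany_distinct}, normal isomorphism classes among the $\AssII(T)$ correspond to orbits of triangulations $T$ under the dihedral symmetries of $P_{n+3}$. Restricting to path triangulations, it suffices to show that these orbits correspond bijectively to elements of $\{+,-\}^{n-1}$ modulo reflection and reversal.

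To carry this out, I would first verify that every path triangulation arises as $T_c$ once one of its two ears is chosen as the starting triangle, so that the map $c\mapsto T_c$ is surjective onto path triangulations (up to the polygon's dihedral symmetry). Next I would identify the two natural involutions on $\{+,-\}^{n-1}$ that produce dihedrally equivalent triangulations: (a)~reading the same dual path from the opposite ear yields the ``reverse-flip'' $c_i\mapsto -c_{n-i}$ and produces literally the same triangulation; (b)~the polygon reflection interchanging the two half-planes determined by the line through the two ears converts every left-turn of the dual path into a right-turn and vice versa, so it yields the negation $c\mapsto -c$ and maps $T_c$ to a dihedrally equivalent triangulation. Composing (a) and (b) gives the pure reversal $c_i\mapsto c_{n-i}$, and together these three operations exhaust the $(\mathbb{Z}/2)^2$ of reflection and reversal.

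The main obstacle will be ruling out additional identifications, i.e., showing that no other dihedral symmetries of $P_{n+3}$ produce new equivalences among the $T_c$. The key observation is that any symmetry taking one path triangulation to another must map ears to ears; since $T_c$ has exactly two ears, such a symmetry is determined (up to the polygon-flip of (b)) by whether it preserves the ordered pair of ears or swaps it. This leaves only the four possibilities (identity, reverse-flip, negation, reversal) described above. A final enumeration then identifies the number of normally non-isomorphic $\AssII(T_c)$ with the number of sign sequences modulo reflection and reversal.
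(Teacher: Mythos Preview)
Your approach is correct and matches the paper's. For part~(i) you invoke Proposition~\ref{prop:dualtree} exactly as the paper intends: the formula for $A(T)$ derived there specializes, when the dual tree is a path with edges labeled $1,\dots,n$ in order, to the negative simple roots together with all interval sums $\alpha_i+\cdots+\alpha_j$, i.e., the almost positive roots. For part~(ii) the paper itself gives no proof beyond the sentence ``It is clear that every path triangulation can be encoded in this way and that $T_{c_1}$ and $T_{c_2}$ are the same (modulo symmetries of the $(n+3)$-gon) if and only if $c_1$ and $c_2$ are the same modulo reflection and reversal,'' combined with Corollary~\ref{corollary:triangmany_distinct}. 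Your argument supplies precisely the details behind that sentence: the two generators of the $(\mathbb{Z}/2)^2$ acting on sign sequences correspond to choosing the other ear as starting point (reverse-flip) and to applying an orientation-reversing symmetry of the polygon (negation), and no further identifications arise because a dihedral symmetry between path triangulations is determined by its orientation and its action on the unordered pair of ears.

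One minor point of phrasing: in your item~(b) the specific reflection ``through the line through the two ears'' need not exist as a symmetry of the polygon (the ear apices are not generally antipodal). What you actually use, and what suffices, is that \emph{any} reflection of the polygon sends $T_c$ to a path triangulation whose encoding from the image of the starting ear is $-c$; this already places $T_c$ and $T_{-c}$ in the same dihedral orbit. With that adjustment your argument is complete.
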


The number of realizations that we get in this way is exactly the same as the number of Hohlweg--Lange associahedra (see Theorem~\ref{thm:classification_HL}). The explanation for this coincidence is in Remark~\ref{rem:cluster-vs-cambrian}.
Nevertheless, the two sets of realizations are almost disjoint; the only common one is the Chapoton--Fomin--Zelevinsky associahedron, obtained in both cases for the sequence that alternates pluses and minuses (Theorem \ref{theo:almost-disjoint}).

\subsubsection{$c$-cluster complexes in type $A_n$}

It turns out that the associahedra  $\AssII(T_0)$ of path triangulations provide a simple combinatorial description of $c$-cluster complexes in type $A_n$ as described by Reading in~\cite{reading_clusters_2007}. 
These complexes are more general than the cluster complexes of Fomin and Zelevinsky~\cite{FZ03}, and have an extra parameter $c$ corresponding to a \textit{Coxeter element}. 

In type $A_n$, Coxeter elements can be represented by a sequence of signs $c\in \{+,-\}^{n-1}$; the corresponding Coxeter element is given by a product of generators $s_1,\dots ,s_n$ in some order such that $s_{i+1}$ comes after $s_i$ if the $i$-th sign in the sequence is positive, and $s_{i+1}$ comes before $s_i$ if the $i$-th sign is negative. Each sequence $c$ induces a single Coxeter element because generators $s_i$ and $s_j$ with $|i-j|\ge 2$ commute.

As in the description of the cluster complex of type $A_n$ in Section~\ref{sec:cluster_complex},
consider the root system of type $A_n$ and the set of almost positive roots $\Phi_{\geq -1}$. In addition, consider a sequence of signs $c\in \{+,-\}^{n-1}$ and let $T_c$ be the corresponding path triangulation.
Label the diagonals of $T_c$ by $\{\delta_1,\dots, \delta_n\}$ in the order they appear in the dual path. 
%
As in the CFZ construction, this gives a natural correspondence between 
the set $\Phi_{\geq-1}$ and the diagonals of $P_{n+3}$: 
We identify the negative simple roots $\{-\alpha_1,\dots,-\alpha_n\}$
with the diagonals $\{\delta_1,\dots, \delta_n\}$, and each positive root
\[
\alpha_{ij}=\alpha_i+\alpha_{i+1}+\dots + \alpha_j, \hspace{1cm} 1\leq i \leq j \leq n,
\]
with the unique diagonal of 
$P_{n+3}$ crossing the (consecutive) diagonals $-\delta_i, -\delta_{i+1}, \dots, -\delta_{j}$.

We say  that two roots $\alpha$ and $\beta$ in $\Phi_{\geq -1}$ are \textit{$c$-compatible} if their corresponding diagonals do not cross. The \textit{$c$-cluster complex} can then be described as the simplicial complex whose faces correspond to sets of almost positive roots that are pairwise $c$-compatible. The maximal simplices in this simplicial complex, which naturally correspond to triangulations of the polygon, are called \textit{$c$-clusters}. For instance, the set 
\[
\{ \alpha_1+\alpha_2+\alpha_3 ,\ 
 \alpha_2+\alpha_3 ,\ 
 \alpha_2+\alpha_3+\alpha_4 ,\ 
 \alpha_3 ,\ 
 -\alpha_5 \}
\]
is a $c$-cluster of type $A_5$ for $c=(-,+,+,-)$ corresponding to the Coxeter element
$s_2s_1s_3s_5s_4$. The reason is that its corresponding diagonals in Figure~\ref{fig:T_c} form a triangulation of the polygon. This algorithm gives a simple combinatorial way of computing $c$-cluster complexes in type $A$. 
The proof that this description of $c$-cluster complexes actually coincides with the original description by Reading follows the two steps $(i)$ and $(ii)$ in the definition of the $c$-compatibility relation in~\cite[Sec.~5]{reading_sortable_2011}. 
As a consequence we obtain:

\begin{proposition}
\label{prop:c-cluster_fan}
The normal fan of the associahedron $\AssII (T_c)$ coincides with the~$c$-cluster fan of type $A_n$.
\end{proposition}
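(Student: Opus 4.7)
The plan is to observe that, once one unwinds both definitions, the two fans have the same rays and the same maximal cones, after which the proposition follows immediately from Theorem~\ref{thm:triangmany_fan}.

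First I would match the rays. Under the bijection between $\Phi_{\geq -1}$ and diagonals of $P_{n+3}$ induced by $T_c$ (as recalled just before the proposition), the negative simple root $-\alpha_i$ corresponds to the diagonal $\delta_i \in T_c$, and by definition the Santos construction assigns $v_{\delta_i} = -\alpha_i$, so these agree. For a positive root $\alpha_{ij} = \alpha_i + \cdots + \alpha_j$, the corresponding diagonal is by construction the unique diagonal $pq$ of $P_{n+3}$ crossing exactly $\delta_i,\delta_{i+1},\ldots,\delta_j$ and no others. Plugging this into the Santos formula
\[
v_{pq} \ =\ \sum_{pq \text{ crosses } \delta_k} \alpha_k
\]
yields precisely $\alpha_{ij}$. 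The only combinatorial fact used here is that the set of diagonals of $T_c$ crossed by any diagonal not in $T_c$ forms a consecutive subsequence along the dual path of $T_c$; this is standard for path triangulations, and is seen by following the sequence of triangles pierced by $pq$.

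Second I would match the maximal cones. By Theorem~\ref{thm:triangmany_fan}, the maximal cones of the normal fan of $\AssII(T_c)$ are the cones $\R_{\geq 0} T$ generated by $\{v_{pq} : pq \in T\}$ as $T$ ranges over triangulations of the $(n+3)$-gon. On the cluster side, the maximal cones of the $c$-cluster fan are, by definition, generated by the $c$-clusters. Under the combinatorial description of $c$-compatibility recalled just above the proposition, two almost positive roots are $c$-compatible iff their corresponding diagonals do not cross, so the $c$-clusters are in bijection with triangulations of $P_{n+3}$. Composing with the ray identification from the first step, the two collections of maximal cones coincide, and hence so do the two fans.

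The one step that is not pure bookkeeping is the claim that the combinatorial (non-crossing) description of $c$-compatibility used above actually agrees with Reading's original inductive definition in~\cite{reading_clusters_2007}. As the excerpt notes, this matching is verified by checking the two inductive clauses $(i)$ and $(ii)$ defining $c$-compatibility in~\cite[Sec.~5]{reading_sortable_2011}, and I would cite that verification rather than reprove it. With this point in hand, the proposition reduces to the ray-and-cone matching above.
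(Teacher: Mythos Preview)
Your proposal is correct and follows essentially the same approach as the paper: the paper does not give a separate formal proof but states the proposition ``as a consequence'' of the preceding discussion, which sets up exactly the ray and maximal-cone matching you spell out, together with the citation to~\cite[Sec.~5]{reading_sortable_2011} for the agreement with Reading's definition of $c$-compatibility. You have simply made explicit what the paper leaves implicit.
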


\begin{remark}
\label{rem:cluster-vs-cambrian}
As mentioned in Remark~\ref{rem:gen-associahedra}, the Hohlweg--Lange construction was generalized by Hohlweg--Lange--Thomas to a construction of $c$-generalized associedra (later described in different contexts by Stella~\cite{Stella} and Pilaud--Stump~\cite{PilaudStump}). There is one $c$-generalized associedron for each Coxeter element $c$, and it has the $c$-Cambrian fan~\cite{Reading_cambrian,ReadingSpeyer09} as its normal fan. 

Proposition~\ref{prop:c-cluster_fan} is an analogous result for the $c$-cluster fans, which again exist for each Coxeter element in a finite Coxeter group. In particular, the proposition shows that in type $A$ the $c$-cluster fans are the normal fans of polytopes.
As far as we know, the same is not known in other types, except 
when $c$ is the bipartite Coxeter element. (The $c$-cluster fan is, in this case, the normal fan of the 
Chapoton--Fomin--Zelevinsky generalized associahedron~\cite{CFZ02}.) In fact, in the bipartite case Reading and Speyer have shown that the $c$-Cambrian fan and the $c$-cluster fan are linearly isomorphic~\cite[Thm.~9.1]{ReadingSpeyer09}. In the general case they only show combinatorial isomorphism between them~\cite[Thm.~1.1 and Sec.~5]{ReadingSpeyer09}. 
%
\end{remark}

\subsubsection{Denominator fans in type $A_n$}

For an arbitrary seed triangulation $T$, the normal fan of $\AssII(T)$ can also be interpreted in the language of cluster algebras, as \emph{denominator fans}. For each choice of seed cluster in a cluster algebra, the denominator fan has as rays the denominator vectors of the cluster variables with respect to the seed cluster,
and it has maximal cones spanned by the denominator vectors of variables that form clusters.
The $c$-cluster fan arises as the particular case where the cluster seed corresponds to an \emph{acyclic quiver} associated to a Coxeter element $c$.

Notice that the name ``denominator fan" is a slight abuse of notation since we do not know, a priori, if they are  fans. But in type $A$, clusters correspond to triangulations and the denominator fan with seed triangulation $T$ is nothing but the normal fan of the associahedron~$\AssII (T)$ (see, e.g.,~\cite[Sec.~7]{CeballosPilaud}). Hence, as a consequence of Theorem~\ref{thm:triangmany_fan} and Theorem~\ref{thm:triangmany_regular} we obtain:

\begin{proposition}
\label{prop:denominator_fan}
For cluster algebras of type $A_n$,
\begin{compactenum}[\rm(i)]
\item The denominator fan associated to any triangulation $T$ of a convex $(n+3)$-gon is a complete simplicial fan, and
\item it is the normal fan of a polytope (the associahedron~$\AssII (T)$).
\end{compactenum}
\end{proposition}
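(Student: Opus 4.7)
The plan is to reduce the proposition to Theorems~\ref{thm:triangmany_fan} and~\ref{thm:triangmany_regular} by identifying the denominator fan associated with $T$ with the fan $\mathcal{F}_T$ constructed in Section~\ref{sec:triang-many}. Once that identification is established, part (i) is Theorem~\ref{thm:triangmany_fan} and part (ii) is Theorem~\ref{thm:triangmany_regular}, applied to the seed triangulation $T$ (with the choice $\alpha_i$ corresponding to the $i$-th diagonal $\delta_i$ of $T$).

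First I would recall the standard dictionary for cluster algebras of type $A_n$, as set up by Fomin--Zelevinsky and given a topological incarnation by Fomin--Shapiro--Thurston: clusters are in bijection with triangulations $T'$ of the convex $(n+3)$-gon, cluster variables $x_{pq}$ are indexed by diagonals $pq$, and mutations correspond to flips. Fixing the seed cluster to be the one associated with $T$, the $n$ initial cluster variables are indexed by the diagonals $\delta_1,\dots,\delta_n$ of $T$. By definition, the denominator vector of a cluster variable $x_{pq}$ is the exponent vector of the denominator of its Laurent expansion in $x_{\delta_1},\dots,x_{\delta_n}$.

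The key combinatorial fact is that in type $A_n$ this denominator vector is exactly the crossing vector of $pq$ against $T$: its $i$-th coordinate is $1$ if $pq$ crosses $\delta_i$ and $0$ otherwise, while for each initial cluster variable $x_{\delta_i}$ the standard convention produces the denominator vector $-\alpha_i$. Comparing with the definition of $v_{pq}$ at the beginning of Section~\ref{sec:triang-many}, we see that the denominator vector of $x_{pq}$ is precisely $v_{pq}$. Consequently the rays of the denominator fan are the $v_{pq}$, and since clusters correspond to triangulations its maximal cones are the $\R_{\geq 0} T'$ as $T'$ ranges over triangulations of the $(n+3)$-gon; that is, the denominator fan equals $\mathcal{F}_T$.

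The only non-routine step is this identification between denominator vectors and crossing vectors. It is well established in the cluster-algebra literature (it follows from the Laurent phenomenon together with the surface model for type $A$) and is explicitly recorded in~\cite[Sec.~7]{CeballosPilaud}, which I would cite rather than reprove. With the identification in hand, both statements of Proposition~\ref{prop:denominator_fan} follow immediately from Theorems~\ref{thm:triangmany_fan} and~\ref{thm:triangmany_regular}.
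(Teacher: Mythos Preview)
Your proposal is correct and matches the paper's own argument: the paper simply notes that in type $A$ the denominator fan with seed $T$ coincides with $\mathcal{F}_T$ (citing \cite[Sec.~7]{CeballosPilaud}), and then invokes Theorems~\ref{thm:triangmany_fan} and~\ref{thm:triangmany_regular}. You have supplied more detail on the identification (the denominator-vector $=$ crossing-vector fact), but the strategy is identical.
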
 

This result suggests a natural generalization of the Santos construction of associahedra to arbitrary finite Coxeter groups: 

\begin{question}
\label{question:CoxeterCatalanAssociahedra}
Given an arbitrary cluster seed in a cluster algebra of finite type 
\begin{compactitem}[$\circ$]
\item is the associated denominator fan a complete simplicial fan?
\item if so, is it the normal fan of a polytope (a generalized associahedron)?
\end{compactitem}
\end{question}

Although this question is phrased in terms of cluster algebras, which deal only with crystallographic root systems, both denominator fans and generalized associahedra make sense in the slightly more general context of finite Coxeter groups. (See~\cite{CeballosPilaud} for an alternative description of denominator vectors in this general set up). 

\begin{question}
If the answer to Question~\ref{question:CoxeterCatalanAssociahedra} is positive, is the classification up to normal isomorphism of Corollary~\ref{corollary:triangmany_distinct} still valid for the generalized associahedra obtained this way? Note that the rotation map on convex polygons can be naturally generalized in the context of finite Coxeter groups (see, e.g., \cite[Sec.~2.2]{CeballosPilaud} or \cite[Sec.~8.3]{CLS13}).   
\end{question}

\section{How many associahedra?}\label{sec:how-many}

We have presented several constructions of the associahedron. 
We call associahedra of types I and II the associahedra $\AssI(\sigma)$ and $\AssII(T)$ studied in the previous two sections.
Associahedra of type I include the Loday (or Shnider--Sternberg, or Rote--Santos--Streinu, or Postnikov, or Buchstaber) associahedron, and both types I and II include the Chapoton--Fomin--Zelevinsky associahedron. They all have pairs of parallel facets while the 
secondary polytope on an $n$-gon does not (Proposition~\ref{prop:par_cont^I}). This implies that
the associahedron as a secondary polytope is never normally isomorphic to any
associahedron of type I or type II. In particular, it is not normally isomorphic to the
Postnikov associahedron or the Chapoton--Fomin--Zelevinsky associahedron.
 
Both types I and II produce exponentially many normally non-isomorphic realizations. 
The number of normally non-equivalent associahedra of type I is asymptotically $2^{n-3}$, while
for type II is asymptotically ${2^{2n+1}}/{\sqrt{\pi n^5}}$. Explicit computations up to dimension $15$ 
are given in Table \ref{table:two-types}. 

\begin{table}[htdp]
\begin{center}
\begin{tabular}{|c|c|c|c|c|c|c|c|c|c|c|c|c|c|c|c|c|}
\hline
$n=$ & 0 & 1 & 2 & 3 & 4 & 5 &6&7&8&9&10&11&12&13&14&15                \\ \hline
$\AssI$ &1&1& 1& 2&3& 6& 10& 20& 36& 72& 136& 272& 528& 1056& 2080& 4160                  \\ \hline
$\AssII$  & 1 & 1 & 1 & 3 & 4 & 12 & 27 & 82& 228& 733& 2282& 7528& 24834& 
		83898& 285357& 983244           \\ \hline
\end{tabular}
\end{center}
\vspace{2mm}
\caption{The number of normally non-isomorphic associahedra
of types I and II up to dimension $15$.}
\label{table:two-types}
\end{table}%

Surprisingly, the realizations of types I and II are almost disjoint:  

\begin{theorem}\label{theo:almost-disjoint}
The only associahedron that is normally isomorphic to both one of type I and one of type II is the Chapoton--Fomin--Zelevinsky associahedron.
\end{theorem}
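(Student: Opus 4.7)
The plan is to exploit the explicit combinatorial descriptions of parallel-facet pairs in the two families together with Lemma~\ref{lemma:automorphism_rotation-reflection}. Suppose $\AssI(\sigma)\cong\AssII(T)$ are normally isomorphic. Then the lemma forces the isomorphism to come from a dihedral symmetry $\varphi$ of the $(n+3)$-gon, and normal isomorphism preserves parallelism of facets, so after identifying the two polygons via $\varphi$ the collection of pairs of parallel diagonals produced by Proposition~\ref{prop:parallel_typeI} (Type~I) must agree with the one produced by Proposition~\ref{prop:triangmany_parallel} (Type~II).

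The main structural handle is the polygon boundary. On the Type~II side each parallel pair is the pair of diagonals of the quadrilateral formed by the two triangles of the seed~$T$ adjacent to some $\delta_i\in T$, and this quadrilateral need not meet the polygon boundary in any structured way. By contrast, Proposition~\ref{prop:parallel_typeI} says that in Type~I the $j$-th pair is the pair of diagonals of the quadrilateral $\{i,j,j+1,k\}$, which always contains the polygon edge $\{j,j+1\}$. Moreover, as $j$ ranges over $[n]$ these $n$ polygon edges are distinct and form the contiguous boundary path between vertex~$1$ and vertex~$n+1$; the three polygon edges not appearing are exactly those incident to the ``extremal'' vertices~$0$ and~$n+2$ of $\widetilde\sigma$. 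Transporting this through $\varphi$, we get the combinatorial constraint on $T$: every flip quadrilateral of $T$ must contain a polygon edge, and the resulting $n$ edges must form a single contiguous boundary path omitting the three edges incident to two distinguished polygon vertices $u$ and $w$.

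The main work, and the step I expect to be the most delicate, is turning this constraint into the statement that $T$ is a snake triangulation with its two ears at $u$ and $w$. I plan to argue contrapositively. First, any ``interior'' triangle of $T$ (one with all three sides being diagonals) forces the flip quadrilateral across any of its sides to have all four outer edges interior, producing no polygon edge and contradicting the first half of the constraint; this already rules out any branching in the dual tree of~$T$. Second, a local analysis along the boundary of $T$ rules out any linear arrangement of flip quadrilaterals whose associated polygon edges fail to concatenate into a single contiguous boundary path; the only way the concatenation can succeed is if the dual path of $T$ runs from the triangle at $u$ to the triangle at $w$, i.e.\ $T$ is a snake with ears at $u$ and $w$.

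Once $T$ is pinned down as a snake, the remaining task is to determine $\sigma$ so that the Type~I and Type~II pair structures actually coincide. Using the explicit formulas for $i$ and $k$ in Proposition~\ref{prop:parallel_typeI}, any two consecutive equal signs in $\widetilde\sigma=(+,-,\sigma,-,+)$ would force the quadrilateral $\{i,j,j+1,k\}$ to skip over an intermediate vertex of the polygon and therefore to fail to match any flip quadrilateral of the snake. Hence $\widetilde\sigma$ must alternate strictly, i.e.\ $\sigma=(+,-,+,-,\dots)$. By Proposition~\ref{prop:special-cases-typeI} the resulting $\AssI(\sigma)$ is (normally isomorphic to) the Chapoton--Fomin--Zelevinsky associahedron, and the corresponding Type~II realization $\AssII(T)$ with snake seed is also $\CFZ(A_n)$ (compare Proposition~\ref{prop:c-cluster_fan}). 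So the unique associahedron common to the two families is $\CFZ(A_n)$.
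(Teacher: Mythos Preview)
Your central structural handle is mistaken. In the Hohlweg--Lange polygon $P_{n+3}(\sigma)$ the vertices $0,1,\dots,n+2$ are \emph{not} in cyclic order around the boundary; the positive vertices form the upper chain and the negative vertices the lower chain, so the cyclic order depends on $\sigma$. In particular, $\{j,j+1\}$ is a boundary edge of $P_{n+3}(\sigma)$ only when $j$ and $j+1$ have the same sign in $\widetilde\sigma$. When they have opposite signs (say $j$ positive, $j+1$ negative), the four vertices $i,j,j+1,k$ from Proposition~\ref{prop:parallel_typeI} sit in cyclic order $j,k,j+1,i$ on the polygon, so $\{j,j+1\}$ is not a side of the flip quadrilateral at all---it is one of its two \emph{diagonals}, i.e.\ one of the parallel-facet diagonals themselves. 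So the claim that the $n$ Type~I quadrilaterals each contribute a boundary edge $\{j,j+1\}$ forming the contiguous path from $1$ to $n+1$ collapses outside the Loday case $\sigma=(-,\dots,-)$, and the argument you build on it (transporting that ``contiguous path'' to constrain $T$) has no content for general~$\sigma$.

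The subsidiary step is also wrong as stated: an interior triangle of $T$ does \emph{not} force the flip quadrilateral across each of its sides to have all four outer edges interior---the adjacent triangle may well be an ear (e.g.\ the central triangle $135$ in a hexagon). So your proposed mechanism for ruling out branching in the dual tree does not work either. The paper's proof avoids both difficulties by extracting a different invariant from Proposition~\ref{prop:parallel_typeI}: every diagonal in the parallel set $B$ joins two vertices of \emph{opposite} sign in $\widetilde\sigma$. Since from the Type~II side $B$ consists of the diagonals of $T$ together with their flips, this single ``opposite-sign'' property applied to the edges and flips of each triangle of $T$ forces (in three short steps) the dual tree to be a path that separates the two chains and alternates left/right, i.e.\ $T$ is the snake.
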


\begin{proof}
Suppose that a sequence $\sigma \in \{+,-\}^{n-1}$ and a triangulation $T$
produce normally isomorphic associahedra $\AssI(\sigma)$ and $\AssII(T)$.  
By Lemma \ref{lemma:automorphism_rotation-reflection} there is 
no loss of generality in assuming that the bijection between facets
induced by this isomorphism corresponds to the identity map 
on the diagonals of the $(n+3)$-gon.
Also, since normal isomorphism preserves parallelism of facets, the $2n$ diagonals corresponding
to the $n$ pairs of parallel facets are the same in $\AssI(\sigma)$ and $\AssII(T)$. Denote the set of them $B$.

From the perspective of  $\AssII(T)$, $B$ consists of the diagonals of $T$ together with its flips. To analyze $B$ from the perspective of  $\AssI(\sigma)$, we consider the $(n+3)$-gon drawn
in the Hohlweg--Lange fashion (with vertices placed along two $x$-monotone chains, the positive and the negative one, placed in the $x$-order indicated by~$\sigma$). 
By Proposition~\ref{prop:parallel_typeI}, $B$ contains only diagonals between vertices of opposite signs. Knowing this we conclude:
\begin{compactitem}[$\circ$]

\item \emph{Every triangle in $T$ contains a boundary edge in one of the chains. (That is, the dual tree of $T$ is a path).}
Indeed, every triangle contains at least two vertices of the same sign in $\sigma$. The edge joining those two vertices cannot be in $B$, so it is a boundary edge.

\item \emph{The third vertex of each triangle is in the opposite chain. (That is, the dual path of $T$ separates the two chains).}
Otherwise the three vertices of a certain triangle lie in the same chain. This is impossible, because (at least) one of the three edges of each triangle is a diagonal, hence it is in $B$.

\item \emph{No two consecutive boundary edges in one chain are joined to the same vertex in the opposite chain. (That is, the dual tree of $T$ alternates left and right turns)}. Otherwise, let $abp$ and $bcp$ be two triangles in $T$ with $ab$ and $bc$ consecutive boundary edges in one of the chains. Then the flip in $bp$ inserts the edge $ac$, so that $ac\in B$. This is impossible, since $a$ and $c$ are in the same chain.
\end{compactitem}
These three properties imply that $T$ is the snake triangulation, so
$\AssII(T)$ is the Chapoton--Fomin--Zelevinsky associahedron. 
\end{proof}

\end{document}